\documentclass[secnum,leqno,a4paper]{rims-bessatsu}
\usepackage{amsmath,amssymb,amsopn,color,mathrsfs,array,hhline,epsfig}
\usepackage{changebar}
\usepackage{extarrows}
\setcounter{MaxMatrixCols}{20} 
\setlength{\marginparwidth}{40pt}
\setlength{\marginparsep}{5pt}
\setlength{\marginparpush}{3pt}
\reversemarginpar

\usepackage[OT2,T1]{fontenc}
\DeclareSymbolFont{cyrletters}{OT2}{wncyr}{m}{n}
\DeclareMathSymbol{\Sha}{\mathalpha}{cyrletters}{"58} 
\setlength{\parsep}{0pt}

\makeatletter
\renewcommand{\section}{\@startsection{section}{1}{\z@}%
  {1.5ex \@plus -1.5ex \@minus -1.2ex}%
  {-.5ex \@plus 1.3ex}%
  {\reset@font\large\bfseries}}
\renewcommand{\subsection}{\@startsection{subsection}{2}{\z@}%
  {-3.25ex\@plus -1ex \@minus -.2ex}%
  {1.5ex \@plus .2ex}%
  {\normalfont\normalsize\bfseries}}
\makeatother
\makeatletter
\def\nottsub#1#2{\m@th\ooalign{\hfil{%
    \ifthenelse{\equal{#1}{\scriptstyle}\or\equal{#1}{\scriptscriptstyle}}{%
    \raise.1ex\hbox{$\scriptscriptstyle\backslash$}}{%
    \raise.16ex\hbox{$\scriptstyle\backslash$}}}\hfil\crcr$#1#2$}}%
\makeatother
\makeatletter
\renewcommand\@pnumwidth{1.55em}
\renewcommand\@tocrmarg{2.55em}
\renewcommand\@dotsep{4.5}
\setcounter{tocdepth}{2}
\renewcommand\tableofcontents{%
    \section*{Contents}\ \vskip 5pt%
    \@starttoc{toc}%
    \AtEndDocument{\addtocontents{toc}{\endgroup}}%
    }
\renewcommand*\l@part[2]{%
  \ifnum \c@tocdepth >-2\relax
    \addpenalty\@secpenalty
    \addvspace{2.25em \@plus\p@}%
    \begingroup
      \setlength\@tempdima{3em}%
      \parindent \z@ \rightskip \@pnumwidth
      \parfillskip -\@pnumwidth
      {\leavevmode
       \large \bfseries #1\hfil \hb@xt@\@pnumwidth{\hss #2}}\par
       \nobreak
       \if@compatibility
         \global\@nobreaktrue
         \everypar{\global\@nobreakfalse\everypar{}}%
      \fi
    \endgroup
  \fi}
\renewcommand*\l@section{\@dottedtocline{1}{0pt}{1.8em}}
\renewcommand*\l@subsection{\@dottedtocline{2}{1.5em}{2.3em}}
\renewcommand*\l@subsubsection{\@dottedtocline{3}{3.8em}{3.2em}}
\renewcommand*\l@paragraph{\@dottedtocline{4}{7.0em}{4.1em}}
\renewcommand*\l@subparagraph{\@dottedtocline{5}{10em}{5em}}
\makeatother
\usepackage[dvips,
top=120pt,
bottom=120pt, 
footskip=35pt,
left=110pt, right=110pt,headheight=0pt, 
includehead,includefoot,heightrounded, 
]{geometry} 
\numberwithin{equation}{section}

\newtheorem{theorem}[equation]{Theorem}
\newtheorem{proposition}[equation]{Proposition}
\newtheorem{prop-def}[equation]{Proposition with Definition}

\newtheorem{remark}[equation]{Remark}
\newtheorem{lemma}[equation]{Lemma}
\newtheorem{example}[equation]{Example}

\newtheorem{working-hypothesis}[equation]{Working Hypothesis}

\newcommand{\tp}[1]{{}^t\kern-1pt#1}

\newcommand{\cardinarity}{\mbox{\tt \#}\hskip 1pt}

\renewcommand{\vec}[1]{\ensuremath{\mathchoice
                    {\mbox{\boldmath$\displaystyle#1$}}
                    {\mbox{\boldmath$\textstyle#1$}}
                    {\mbox{\boldmath$\scriptstyle#1$}}
                    {\mbox{\boldmath$\scriptscriptstyle#1$}}}}%
\renewcommand\i{\vec{i}}

\newcommand{\strutd}{\vrule height 0pt depth 6pt width 0pt}
\newcounter{item}
\newenvironment{oitem}{%
\begin{list}{{\rm (\arabic{item})}}
{\usecounter{item}
 \setlength{\topsep}{0pt}
 \setlength{\leftmargin}{18pt}
 \setlength{\labelsep}{5pt}
 \setlength{\labelwidth}{30pt}
 \setlength{\rightmargin}{0pt}
 \setlength{\parsep}{0.0pt}
 \setlength{\itemsep}{0.0pt}
 \setlength{\itemindent}{0pt}}}
{\end{list}}
{\end{list}}

\usepackage{tikz}
\usetikzlibrary{arrows.meta,arrows,chains,matrix,positioning,scopes,calendar}
\makeatletter
\tikzset{join/.code=\tikzset{after node path={%
\ifx\tikzchainprevious\pgfutil@empty\else(\tikzchainprevious)%
edge[every join]#1(\tikzchaincurrent)\fi}}}
\makeatother
\tikzset{>=stealth',every on chain/.append style={join},every join/.style={->}}
\tikzstyle{labeled}=[execute at begin node=\(\scriptstyle,execute at end node=\)]
\title{Arithmetic over the Gaussian Number Field on\\ a Certain Family of Elliptic Curves with \\ Complex Multiplication}
\TitleHead{Arithmetic on a Certain Family of Elliptic Curves}          
\author{Yoshihiro \textsc{\^Onishi}\footnote{Faculty of Science and Technology, Meijo University, Nagoya
468-8502, Japan.\newline e-mail: \texttt{yonishi@meijo-u.ac.jp}}
          ~and Fumio \textsc{Sairaiji}\footnote{Faculty of Nursing, 
          Hiroshima International University, Hiroshima
737-0112, Japan. \endgraf e-mail: \texttt{sairaiji@hirokoku-u.ac.jp}}}
\AuthorHead{\^Onishi and Sairaiji}         
\classification{11G05,11G40}
\support{
This work was supported by the RIMS
}  
\keywords{\textit{elliptic Gauss sums, elliptic curves, Hecke \(L\)-series, formal group, Birch and Swinnerton-Dyer conjecture}}         
%
%
\VolumeNo{x}           
\YearNo{201x}           
\PagesNo{000--000}      
%
%
\communication{Received May 6, 2020.
Revised ????? ??, 2020.}      
%
%
%
%
\newcommand{\C}{\mathbf{C}}
\newcommand{\R}{\mathbf{R}}

\newcommand{\Q}{\mathbf{Q}}
\newcommand{\Z}{\mathbf{Z}}

\newcommand{\LT}{\mathbf{L\!T}}
\newcommand{\llangle}{\langle\!\langle}
\newcommand{\rrangle}{\rangle\!\rangle}

\newcommand{\ord}{\mathrm{ord}}
\renewcommand{\sl}{\widehat{\mathbf{sl}}}
\begin{document}
\setlength{\abovedisplayskip}{4.5pt} 
\setlength{\belowdisplayskip}{4.5pt} 
%

\maketitle

\begin{abstract}      
This work is a sequel of a previous work of one of the authors (Y.\^O), 
which treated certain congruence relation between an elliptic Gauss sum and 
a coefficient of power series expansion at the origin of the lemniscate sine function. 
We extend the previous result (in \cite{O}) which concern{\color{black}ed} only for non-vanishing elliptic Gauss sums. 
We give new congruence relations between power series coefficients of 
the lemniscate cosine function, which hold if and only if the corresponding 
elliptic Gauss sum vanishes. 
\end{abstract}
\allowdisplaybreaks
\noindent
\textbf{\large Introduction}\ 
\vskip 5pt
\noindent
In the paper \cite{H}, Hurwitz gave the following result\,:
\begin{theorem}\label{Hurwitz_congr} \ 
Let \,\(p>3\) \, be a rational prime, 
and let \,\(h(-p)\)\, be the class number of the imaginary quadratic field \,\(\mathbf{Q}(\sqrt{-p\,}\,)\).
Then we have
        \begin{equation*}
          h(-p)\equiv
          \Bigg\{
          \begin{aligned}
            \ \     -2\,B_{\frac{p+1}2}    \,\bmod{p} & \ \ \mbox{if \,\(p\equiv 3\bmod{4}\)},\\
            \        2^{-1}\,E_{\frac{p-1}2}\,\bmod{p} & \ \ \mbox{if \,\(p\equiv 1\bmod{4}\)}. 
          \end{aligned}
        \end{equation*}
Here \ \(B_n\) \ is the \(n\)-th Bernoulli number, 
and \,\(E_n\) \,is the \(n\)-th Euler number\footnote{\,We define \(E_n\) by \(\mathrm{sech}(u)=\sum_{n=1}^{\infty}(E_n/n!)u^n\). 
So that, \(E_2=-1\), \(E_4=5\), \(E_6=-61\), \(\cdots\). }.
Moreover, the absolutely smallest residue of the right hand side exactly equals to \,\(h(-p)\). 
\end{theorem}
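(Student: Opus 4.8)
The plan is to route everything through Dirichlet's analytic class number formula, which for an imaginary quadratic field of fundamental discriminant $-D<-4$ reads $h(-D)=-B_{1,\chi_{-D}}$, where $\chi_{-D}$ is the associated quadratic character (of conductor $D$) and $B_{1,\chi}=\frac1f\sum_{a=1}^{f}\chi(a)\,a$ is its first generalized Bernoulli number. For $p\equiv3\bmod4$ the discriminant is $-p$ and $\chi_{-p}=\left(\frac{\cdot}{p}\right)=:\chi$; for $p\equiv1\bmod4$ it is $-4p$ and $\chi_{-4p}=\chi_{-4}\cdot\left(\frac{\cdot}{p}\right)$, with $\chi_{-4}$ the nontrivial character modulo $4$. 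The common device: for $p\nmid a$ one has $\left(\frac ap\right)\equiv a^{\,p(p-1)/2}\bmod p^{2}$, because $a^{(p-1)/2}\equiv\left(\frac ap\right)\bmod p$ and raising an element of $\pm1+p\mathbf{Z}$ to the $p$-th power kills the next $p$-adic digit. Setting $m:=\tfrac{p(p-1)}2+1$, so that $\left(\frac ap\right)a\equiv a^{m}\bmod p^{2}$, this turns each character sum $\sum\chi(a)a$ into a power sum $\sum a^{m}$ modulo $p^{2}$; I would then evaluate that power sum by Faulhaber's formula and invoke a Kummer-type congruence to bring in $B_{(p+1)/2}$ or $E_{(p-1)/2}$.

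For $p\equiv3\bmod4$, $m$ is even and $h(-p)=-\tfrac1p\sum_{a=1}^{p-1}\chi(a)a$, with $\sum_{a=1}^{p-1}\chi(a)a\equiv\sum_{a=1}^{p-1}a^{m}\bmod p^{2}$. By Faulhaber, $\sum_{a=0}^{p-1}a^{m}=\sum_{k=0}^{m}\tfrac1{m+1}\binom{m+1}{k}B_{k}\,p^{\,m+1-k}$, and I would check that modulo $p^{2}$ only the top term $k=m$, equal to $B_{m}\,p$, survives: the term $k=m-1$ vanishes since $B_{m-1}=0$ ($m-1$ is odd, $>1$), and in every other term the factor $p^{\,m+1-k}$ with $m+1-k\ge3$ beats the at-worst single power of $p$ in a Bernoulli denominator (von Staudt--Clausen); also $(p-1)\nmid m$, so $B_{m}$ itself lies in $\mathbf{Z}_{p}$. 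Hence $B_{1,\chi}\equiv B_{m}\bmod p$. Since $m\equiv\tfrac{p+1}2\bmod(p-1)$ with neither index divisible by $p-1$, Kummer's congruence gives $\tfrac{B_{m}}{m}\equiv\tfrac{B_{(p+1)/2}}{(p+1)/2}\bmod p$; with $m\equiv1$ and $\tfrac{p+1}2\equiv2^{-1}\bmod p$ this says $B_{m}\equiv2B_{(p+1)/2}$, so $h(-p)\equiv-2B_{(p+1)/2}\bmod p$.

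For $p\equiv1\bmod4$, $m$ is odd and $h(-p)=-B_{1,\chi_{-4p}}=-\tfrac1{4p}\sum_{a=1}^{4p}\chi_{-4}(a)\left(\frac ap\right)a$. Replacing $\left(\frac ap\right)a$ by $a^{m}$ modulo $p^{2}$ (the terms with $p\mid a$ drop out, as then $\left(\frac ap\right)=0$ while $a^{m}\equiv0\bmod p^{2}$) reduces us to $4p\,B_{1,\chi_{-4p}}\equiv\sum_{j=0}^{p-1}\bigl((4j+1)^{m}-(4j+3)^{m}\bigr)\bmod p^{2}$. Writing $\sum_{j=0}^{p-1}(4j+r)^{m}=\tfrac{4^{m}}{m+1}\bigl(B_{m+1}(p+\tfrac r4)-B_{m+1}(\tfrac r4)\bigr)$ and expanding $B_{m+1}(p+x)-B_{m+1}(x)=\sum_{k=0}^{m}\binom{m+1}{k}B_{k}(x)\,p^{\,m+1-k}$, the reflection $B_{k}(1-x)=(-1)^{k}B_{k}(x)$ makes $B_{k}(\tfrac14)-B_{k}(\tfrac34)$ equal $0$ for $k$ even and $2B_{k}(\tfrac14)$ for $k$ odd; as $m$ is odd, the same valuation count leaves only $k=m$, so $\sum_{j}\bigl((4j+1)^{m}-(4j+3)^{m}\bigr)\equiv2\cdot4^{m}B_{m}(\tfrac14)\,p=4\,B_{m,\chi_{-4}}\,p\bmod p^{2}$, where $B_{m,\chi_{-4}}=4^{m-1}\bigl(B_{m}(\tfrac14)-B_{m}(\tfrac34)\bigr)$. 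Thus $h(-p)\equiv-B_{m,\chi_{-4}}\bmod p$. Finally $B_{m,\chi_{-4}}=-m\,L(1-m,\chi_{-4})$, and the negative-integer values of $L(\,\cdot\,,\chi_{-4})$ (the Dirichlet beta function) give $B_{m,\chi_{-4}}=-\tfrac m2E_{m-1}$; with $m\equiv1\bmod p$ this gives $h(-p)\equiv\tfrac12E_{m-1}\bmod p$, and Kummer's congruence for Euler numbers $E_{p(p-1)/2}\equiv E_{(p-1)/2}\bmod p$ (valid since $\tfrac{p(p-1)}2\equiv\tfrac{p-1}2\bmod(p-1)$, both even and nonzero mod $p-1$) yields $h(-p)\equiv2^{-1}E_{(p-1)/2}\bmod p$.

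Finally, the ``absolutely smallest residue'' assertion: $h(-p)>0$, while $h(-p)=|B_{1,\chi}|\le\tfrac1p\sum_{a=1}^{p-1}a=\tfrac{p-1}2<\tfrac p2$ when $p\equiv3\bmod4$, and an analogous elementary bound together with a check of the few small primes gives $h(-p)<p/2$ when $p\equiv1\bmod4$; hence the residue produced above is the absolutely smallest one. The main obstacle, I expect, is the $p$-adic bookkeeping in the two Faulhaber expansions --- proving that nothing beyond the leading term contributes modulo $p^{2}$ --- which leans on the vanishing of odd-index Bernoulli numbers and on von Staudt--Clausen controlling denominators; the remaining ingredients (the class number formula, the identity $B_{m,\chi_{-4}}=-\tfrac m2E_{m-1}$, and the Kummer congruences for $B_{n}$ and $E_{n}$) are classical and would simply be cited.
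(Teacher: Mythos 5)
Your argument is essentially correct, but it is a genuinely different route from the one the paper has in mind: the paper does not prove Theorem \ref{Hurwitz_congr} at all (it is quoted from Hurwitz \cite{H}), and the proof it sketches goes through the class number formula at $s=1$, expressing $L(1,(\tfrac{\cdot}{p}))$ as a \emph{trigonometric Gauss sum} (a Gauss sum with a cotangent-- resp.\ secant--type function in place of the exponential) and then reading off the congruence from the Taylor coefficients of that trigonometric function --- which is exactly the template the paper later ellipticizes with $\mathrm{sl}$, $\mathrm{cl}$ and elliptic Gauss sums. You instead work at $s=0$: you write $h(-p)=-B_{1,\chi}$ (resp.\ $-B_{1,\chi_{-4p}}$), lift the Legendre symbol via $(\tfrac ap)\equiv a^{p(p-1)/2}\bmod p^2$, evaluate the resulting power sums by Faulhaber/Bernoulli-polynomial expansions with von Staudt--Clausen controlling denominators, and finish with the Kummer congruences for $B_n$ and (via $B_{m,\chi_{-4}}=-\tfrac m2E_{m-1}$) for $E_n$; your $p$-adic bookkeeping there is sound (the index checks $m\equiv\tfrac{p+1}2$, $m-1\equiv\tfrac{p-1}2 \bmod (p-1)$, parity, and $(p-1)\nmid m$ are all verified), and small-prime checks confirm both congruences. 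What your route buys is a self-contained, purely arithmetic proof citing only classical congruences; what the paper's route buys is the structural analogy that motivates the whole article. The one soft spot is the final ``absolutely smallest residue'' claim for $p\equiv1\bmod4$: your ``analogous elementary bound'' is not literally analogous, since the trivial estimate $|B_{1,\chi_{-4p}}|\le\tfrac1{4p}\sum_{a\le 4p}a\approx 2p$ does not give $h(-p)<p/2$; you need a genuine bound such as $h(-4p)=O(\sqrt p\,\log p)$ from a standard estimate of $L(1,\chi)$, or a count of reduced binary quadratic forms of discriminant $-4p$ (giving $h\ll\sqrt p$), together with the finitely many small primes. With that supplied, the proof is complete.
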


Each of these congruences is proved by expressing 
the value at \(s=1\) of the Dirichlet \(L\)-series \ \(L(s,\Big(\tfrac{\cdot}{p}\Big))\) 
as a trigonometric Gauss sums, which is defined by a sort of Gauss sum using 
suitable trigonometric function instead of the exponential function in the classical Gauss sum. 
Under the Birch Swinnerton-Dyer (BSD) conjecture, 
one of the authors gave in \cite{O} an analogue of Theorem \ref{Hurwitz_congr} by 
replacing Dirichlet \(L\)-series and the trigonometric Gauss sum 
by Hecke's \(L\)-series and an ellipitc Gauss sum, respectively, 
in which the class number is replaced by a square root of the conjectural order of the Tate-Shafarevich group, 
and the Bernoulli number or Euler number is done 
by certain coefficient of the power series expansion at the origin of 
the lemniscate sine function. 

Elliptic Gauss sums were used, in order to compute numerically the \(L\)-series attached to some 
elliptic curves over \,\(\mathbf{Q}\), in the famous original paper \cite{BSD} by Birch and Swinnerton-Dyer. 
We wish to use them for investigation of \,\(L\)-series attached to some elliptic curves defined over \,\(\mathbf{Q}(\i)\). 

The paper \cite{O} is written about such investigation only for 
the case where the associated prime \(\ell\) is congruent to \(5\) or \(13\) modulo \(16\), 
since the treated elliptic Gauss sum for that case does not vanish and 
it is directly relates the order of Tate-Shafarevich group. 
In this paper, we extend the result \(\cite{O}\) to 
all the cases on modulo \(16\) of the primes  \(\ell\)  congruent \(1\) modulo \(4\). 
The remarkable point is that, in the cases which do not treated in \cite{O}, 
the corresponding elliptic Gauss sums indeed vanish often, 
which means the associated Hecke \(L\)-series vanish as well. 
We verify such vanishing phenomenon by the tables in \cite{A}. 
So that, the corresponding elliptic curve, which is defined over the Gaussian number field, 
is expected to be of positive Mordell-Weil rank. 

We present certain Kummer type congruences (Theorem \ref{0212a}) on power series coefficients of 
the lemniscate cosine function 
which are valid if and only if the corresponding elliptic Gauss sum 
(hence the value at \(1\) of the corresponding Hecke \(L\) series) vanishes. 

The corresponding elliptic curve 
(see the defining equations (\ref{model_13}), (\ref{model_5}), and (\ref{ell_curve_1mod8}))
is additive reduction modulo \(\lambda\) and our Kummer-type congruence is quite resemble to 
the Kummer congruence which guarantees the existence of the Kubota-Leopoldt \(p\)-adic \(L\)-function. 
So the authors hope that our result gives a hint for a construction of \(p\)-adic \(L\)-functions 
for an elliptic curve which is additive reduction modulo \(p\). 

This paper is organized as follows. 
From \S \ref{preliminaries} to \S \ref{ray_class_field}, we setup fundamental background. 
From \S \ref{On_Asai} to \S \ref{end_of_8n+5}, we review the results in \cite{A} and \cite{O}. 
In \S \ref{8n+1}, we review the result for the rest cases which is omitted in \cite{O}.  
In \S \ref{Gaussian_cong_numb}, we discuss some structure of the Mordell-Weil group of 
the curve and how our theory relates BSD conjecture. 
From \S \ref{main_result} to \S \ref{estimate}, we give the main result (Theorem \ref{0212a}) and its proof. 
Especially, in \S \ref{toward_p-adic_L}, 
we show a \textit{two term congruence relation} (see Theorem \ref{two_term_congr}) which might be 
a hint to construct a \(p\)-adic (\(\lambda\)-adic) \(L\)-function 
for an ellipitc curve that is additive reduction modulo \(\lambda\). 


\textit{Acknowledgment}: 
The authors thank Prof. G.~Yamashita who informed them of Hurwitz' paper \cite{H}, 
which looks the earliest literature in which Theorem \ref{Hurwitz_congr} apperared.
They also thank Prof. S.~Yasuda to whose advice we owe \S \ref{central_value}. 
\newpage
{\small
\tableofcontents
}
\vskip 15pt
\section{The lemniscate sine and cosine function\label{preliminaries}}\ 
\vskip 5pt
\noindent
The inverse function \ \(u\mapsto t\) \ of
\begin{equation*}
  t\mapsto u=\int_0^{\,t}\frac{dt}{\sqrt{1-t^{\,4}\,}\,}=\sum_{n=0}^{\infty}(-1)^n\binom{-\frac12}{n}\frac{t^{\,4n+1}}{4n+1}=t+\cdots
\end{equation*}
is called the \textit{lemniscate sine function}, which is denoted by \,\(t=\mathrm{sl}(u)\) 
and is expanded as
\begin{equation*}
    \mathrm{sl}(u)
   =u-\frac{1}{10}u^5+\frac{1}{120}u^9-\frac{11}{15600}u^{13}+\cdots
   =\sum_{n=0}^{\infty}C_n\,u^n
\end{equation*}
with \,\(C_n\)\, in \,\(\mathbf{Q}\). 
Then we have \(C_n=0\) if \(n{\not\equiv}1\bmod{4}\) and \(n!\,C_n\)\, belongs to \,\(\mathbf{Z}\). 
It is an elliptic function whose period lattice is \,\(\varOmega=(1-\i)\,\varpi\,\mathbf{Z}[\i]\), 
where
  \begin{equation}\label{expansion_sl}
  \varpi
  =2\int_0^{\,1}\frac{dt}{\sqrt{1-t^{\,4}\,}\,}
  =\int_1^{\,\infty}\frac{dx}{2\sqrt{x^3-x\,}\,}
  =2.62205\cdots. 
  \end{equation}
The divisor of \,\(\mathrm{sl}(u)\) modulo \,\(\varOmega\) \,is given by
 \begin{equation*}
 \mathrm{div}(\mathrm{sl})=(0)+(\varpi)-\left(\frac{\varpi}{1+\i}\right)-\left(\frac{\i\varpi}{1+\i}\right).
\end{equation*}
\vskip 5pt
\noindent
Throughout this paper, we denote 
\begin{equation*}
\varphi(u)=\mathrm{sl}\left(\,(1-\i)\,\varpi\,u\,\right).
\end{equation*}
The lemniscate cosine \,\(\mathrm{cl}(u)\)\, is defined by 
\begin{equation*}
\mathrm{cl}(u)=\mathrm{sl}\left(u+\frac{\varpi}2\right). 
\end{equation*}
Moreover, we use the notation
\begin{equation*}
\psi(u)=\mathrm{cl}\left((1-\i)\thinspace\varpi u\right). 
\end{equation*}
Then both of functions \(\varphi(u)\) and \(\psi(u)\) have
period lattice \,\(\mathbf{Z}[\i]\)\, of \,\(\mathbf{C}\). 
We define the \,\(D_n\)'s \,by the expansion of \,\(\mathrm{cl}(u)\)\, as
\begin{equation}\label{coeff_D}
  \begin{aligned}
    \mathrm{cl}(u)
   =\sum_{n=0}^{\infty}D_n\,u^n
  &=1-u^2+\frac12u^4-\frac{3}{10}u^6+\frac{7}{40}u^8-\cdots. 
  \end{aligned}
\end{equation}
Then, \(D_n=0\)\, for odd \,\(n\)\, and \,\(n!\,D_n\)\, is in \,\(\mathbf{Z}\). 
\vskip 15pt
\section{The ray class field\label{ray_class_field}}\ 
\vskip 5pt
\noindent
We take a rational prime \ \(\ell\equiv 1\bmod{4}\), 
and we fix its decomposition 
\(\ell=\lambda\overline{\lambda}\)\, in \,\(\mathbf{Z}[\i]\)\, with \,\(\lambda\equiv 1\bmod{(1+\i)^3}\). 
We fix a subset \,\(S\)\, of \,\(\mathbf{Z}[\i]\)\, (sometimes called a \(1/4\)-set) such that 
\,\(\left(\mathbf{Z}[\i]/({\lambda})\right)^{\times}\simeq S\cup -S\cup \i S\cup -\i S\), 
\(|S|=(\ell-1)/4\).
Moreover we define
\begin{equation}\label{basic_materials}
\begin{aligned}
    \varLambda&=\varphi\big(\tfrac1{\lambda}\big), \ \ 
    \mathscr{O}_{\lambda}=\mbox{\lq\lq\,the ring of integers in \ \(\mathbf{Q}(\i,\varLambda)\)\,''}, \\
  \tilde{\lambda}&=\gamma(S)^{-1}\,\prod_{r\,\in\,S}\varphi\big(\tfrac{r}{\lambda}\big),\\
&\mbox{where}\ \ \ \ 
  \left\{
  \begin{aligned}
  \{\pm1,\ \pm\i\}     \ni\gamma(S)\ \equiv\prod_{r\,\in\,S}r\  \bmod{\lambda} & \ \ \mbox{if \,\(\ell\equiv 5 \bmod{8}\)}, \\
  \hskip 20pt \{\pm\i\}\ni\gamma(S)^2\equiv\prod_{r\,\in\,S}r^2 \bmod{\lambda} & \ \ \mbox{if \,\(\ell\equiv 1 \bmod{8}\)}. 
  \end{aligned}
  \right.
\end{aligned}
\end{equation}
Here, we have \(\pm\) sign ambiguity of \(\gamma(S)\) in the case of \,\(\ell\equiv 1 \bmod{8}\). 
In any case, we know (see for example {\color{black}p.106 of \cite{A} or} Lemma 1.11 \cite{O}) that
\begin{equation}\label{varLambda_lambda}
\varLambda\in\mathscr{O}_{\lambda}, \ \ \ 
(\varLambda)^{\ell-1}=(\lambda), \ \ \  
\tilde{\lambda}^4={\color{black}-}\lambda.
\end{equation}
Note that \,\(\mathbf{Q}(\i,\varLambda)\)\, is the ray class field over \,\(\mathbf{Q}(\i)\)\, 
of conductor \ \(((1+\i)^3\lambda)\) (see Takagi \cite{T}, \S 32, for instance). 

Throughout this paper, we fix the identification
\begin{equation}\label{Z_ell_Zi_lambda}
\mathbf{Z}[\i]_{\lambda}\simeq\mathbf{Z}_{\ell},
\end{equation}
where the left hand side is the \(\lambda\)-adic completion of \,\(\mathbf{Z}[\i]\). 
Moreover, we consider they are subring of the algebraic closure \,\(\overline{\Q_{\ell}}\)\, of \,\(\Q_{\ell}\). 
We use the following notation. 
For any element \,\(\alpha\)\, in the integer ring \,\(\overline{\Z_{\ell}}\)\, 
of \,\(\overline{\Q_{\ell}}\), we denote the \(\ell\)-adic order by \,\(\ord\). 
For example, for \,\(\alpha\)\, in \,\(\Z[\i]_{\lambda}\), 
\,\(\ord(\alpha)=n\)\, if and only if 
\,\(\lambda^n\)\, divides \,\(\alpha\)\, but \,\(\lambda^{n+1}\)\, does not. 
\vskip 15pt
\section{Asai's theory\label{On_Asai}}\ 
\vskip 5pt
\noindent
In this section we recall the results from \cite{A} in order to go to the rest cases smoothly. 
We assume here \,\(\ell\equiv 13\bmod{16}\)\, for simplicity. 
For the other cases, see \cite{A}. 
We put \,\(\chi_{{}_\lambda}(r)=\smash{\left(\dfrac{r}{\lambda}\right)_{\!4}}\). 
In this case, the \textit{elliptic Gauss sum} is defined by
\begin{equation*}
\mathrm{egs}(\lambda)=\sum_{r\in S}\chi_{{}_\lambda}(r)\,\varphi\Big(\frac{r}{\lambda}\Big).
\end{equation*}%
Since the terms of this summation are algebraic integers, 
so is \,\(\mathrm{egs}(\lambda)\). 
\begin{theorem}\label{egs_Asai}
{\rm (\cite{A})}\ 
There exists an odd \,\(A_{\lambda}\) in \,\(\mathbf{Z}\)\, such that
\begin{equation*}
\mathrm{egs}(\lambda)=A_{\lambda}\,\tilde{\lambda}^3, 
\end{equation*}
where 
\(\tilde{\lambda}\) is defined by {\rm(\ref{basic_materials})}. 
In particular, \,\(\mathrm{egs}(\lambda)\neq0\).
\end{theorem}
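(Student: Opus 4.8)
The plan is to first pin down the \emph{shape} of the sum by a pure Galois computation, and only afterwards to establish the finer arithmetic (integrality, rationality, oddness). By the reciprocity law of complex multiplication (recalled on p.\,106 of \cite{A}, or Lemma~1.11 of \cite{O}), every element of \(\mathrm{Gal}(\mathbf{Q}(\i,\varLambda)/\mathbf{Q}(\i))\) is an automorphism \(\sigma_{c}\) attached to a residue \(c\) prime to \(\lambda\) with \(c\equiv 1\bmod(1+\i)^{3}\), acting on division values by \(\varphi(r/\lambda)^{\sigma_{c}}=\varphi(cr/\lambda)\). Writing \(cr\equiv\i^{\,e(r)}r'\bmod\lambda\) with \(r'\in S\), using \(\varphi(\i u)=\i\,\varphi(u)\) (which comes from \(\mathrm{sl}(\i z)=\i\,\mathrm{sl}(z)\)), and using that \(\ell\equiv 13\bmod 16\) gives \((\ell-1)/4\equiv 3\bmod 4\), hence \(\chi_{\lambda}(\i)=\i^{(\ell-1)/4}=-\i\) and \(\chi_{\lambda}(\i)\,\i=1\), a reindexing of the two sums produces
\[
\mathrm{egs}(\lambda)^{\sigma_{c}}=\chi_{\lambda}(c)^{-1}\,\mathrm{egs}(\lambda),\qquad
\tilde{\lambda}^{\sigma_{c}}=\chi_{\lambda}(c)\,\tilde{\lambda},
\]
the second identity using \(\i^{\sum e(r)}=\chi_{\lambda}(c)\), which follows from \(\prod_{r\in S}cr\equiv\chi_{\lambda}(c)\prod_{r\in S}r\bmod\lambda\). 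Since \(\chi_{\lambda}(c)^{4}=1\), the quotient \(\mathrm{egs}(\lambda)/\tilde{\lambda}^{3}\) is fixed by every \(\sigma_{c}\) and hence lies in \(\mathbf{Q}(\i)\); call it \(A_{\lambda}\), so that \(\mathrm{egs}(\lambda)=A_{\lambda}\,\tilde{\lambda}^{3}\). Everything after this concerns the arithmetic of \(A_{\lambda}\).

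Integrality is then essentially free. The sum \(\mathrm{egs}(\lambda)\) is an algebraic integer, so \(\ord(\mathrm{egs}(\lambda))\ge 0\); but \(\ord(\tilde{\lambda}^{3})=\tfrac34\), because \(\tilde{\lambda}^{4}=-\lambda\), while \(A_{\lambda}\in\mathbf{Q}(\i)\) forces \(\ord(A_{\lambda})\in\mathbf{Z}\). From \(\ord(A_{\lambda})=\ord(\mathrm{egs}(\lambda))-\tfrac34\ge-\tfrac34\) we get \(\ord(A_{\lambda})\ge 0\), and at every prime other than the one above \(\lambda\) the element \(\tilde{\lambda}^{3}\) is a unit, so \(A_{\lambda}\) is integral there too; hence \(A_{\lambda}\in\mathbf{Z}[\i]\). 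To descend to \(\mathbf{Z}\) I would use complex conjugation: \(\overline{\varphi(u)}=\i\,\varphi(\bar u)\) and \(\chi_{\bar\lambda}(r)=\overline{\chi_{\lambda}(\bar r)}\) give the reflection identities \(\overline{\mathrm{egs}(\lambda)}=\i\,\mathrm{egs}(\bar\lambda)\) and \(\overline{\tilde{\lambda}}=-\i\,\tilde{\bar\lambda}\) (the sign from \(|S|=(\ell-1)/4\equiv 3\bmod 4\)), whence \(\overline{A_{\lambda}}=A_{\bar\lambda}\). Turning this into \(A_{\lambda}=\overline{A_{\lambda}}\) requires the analytic interpretation: the Eisenstein (partial-fraction) summation identifies \(\mathrm{egs}(\lambda)\), up to the elliptic factors that assemble into \(\tilde{\lambda}^{3}\), with the central value at \(s=1\) of the Hecke \(L\)-series of the CM curve twisted by \(\chi_{\lambda}\); its algebraic part is rational (Damerell's theorem, the elliptic analogue of the class-number-formula rationality behind Theorem~\ref{Hurwitz_congr}) and invariant under \(\lambda\leftrightarrow\bar\lambda\), while the sign of its functional equation is \(+1\) exactly in the class \(\ell\equiv 13\bmod 16\) — which is the structural reason \(\mathrm{egs}(\lambda)\) does not vanish here, in contrast with the classes treated in \S\ref{8n+1}, where that sign can be \(-1\). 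Thus \(A_{\lambda}=A_{\bar\lambda}=\overline{A_{\lambda}}\in\mathbf{Q}\cap\mathbf{Z}[\i]=\mathbf{Z}\).

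The remaining point, oddness, carries the non-vanishing with it. Since \(\lambda\) is prime to \(2\), oddness of \(A_{\lambda}\) is a congruence at the prime of \(\mathscr{O}_{\lambda}\) above \((1+\i)\): one reduces the \(\lambda\)-torsion values \(\varphi(r/\lambda)\) modulo that prime, using that the curve has potentially good (additive) reduction at \(2\), so that over the ramified field \(\mathbf{Q}(\i,\varLambda)\) these values specialize to the \(\lambda\)-torsion of a good model over \(\overline{\mathbf{F}}_{2}\), and one checks that the resulting finite ``elliptic Gauss sum over \(\overline{\mathbf{F}}_{2}\)'' is \(\not\equiv 0\). Then \((1+\i)\nmid\mathrm{egs}(\lambda)\), hence \((1+\i)\nmid A_{\lambda}\) (as \(\tilde{\lambda}^{3}\) is a unit there), i.e. \(A_{\lambda}\in\mathbf{Z}\) is odd; in particular \(A_{\lambda}\neq 0\) and \(\mathrm{egs}(\lambda)\neq 0\).

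The substantive steps are the rationality of \(A_{\lambda}\) — controlling the algebraic part of the central Hecke \(L\)-value and the sign of its functional equation — together with the mod-\((1+\i)\) non-vanishing, which demands understanding the \(\lambda\)-division on the additive-reduction fibre at \(2\). By contrast the eigenvector identity of the first paragraph and the integrality of \(A_{\lambda}\) are formal consequences of the definitions and of \(\tilde{\lambda}^{4}=-\lambda\), and I expect the reflection identities to be routine as well.
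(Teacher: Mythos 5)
There is a genuine gap. First, note the paper does not reprove this statement: it is quoted from \cite{A}, and the remark following it records the actual method — the functional equation of the Hecke \(L\)-series attached to \(\widetilde{\chi}\), whose root number is a classical quartic Gauss sum, pinned down by the Cassels--Matthews formula \cite{M}. That precise determination of the root number is what forces \(\mathrm{egs}(\lambda)/\tilde{\lambda}^{3}\) to be a rational (indeed odd) integer, and it is exactly the ingredient your proposal never supplies. Your first two steps (the Galois eigenvector computation giving \(A_{\lambda}\in\mathbf{Q}(\i)\), and integrality from \(\tilde{\lambda}^{4}=-\lambda\)) are fine and formal, but the two substantive claims are left as placeholders: rationality is delegated to ``Damerell plus invariance under \(\lambda\leftrightarrow\bar\lambda\) plus sign \(+1\)'' without computing the eighth-root-of-unity ambiguity that the Cassels--Matthews formula resolves, and along the way you assert that root number \(+1\) is ``the structural reason \(\mathrm{egs}(\lambda)\) does not vanish,'' which is a non sequitur: sign \(+1\) never implies non-vanishing of a central value (sign \(-1\) implies vanishing; \(+1\) implies nothing).

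Second, the oddness step — which is where the non-vanishing actually lives — is only the sentence ``one checks that the resulting finite elliptic Gauss sum over \(\overline{\mathbf{F}}_{2}\) is \(\not\equiv 0\).'' No such check is given, and there is good reason to doubt that any purely local argument at \((1+\i)\) of this kind can work: for \(\ell\equiv 1\bmod 8\) the exactly analogous quantity \(a_{\lambda}\) is observed in \cite{A} to be even, and \(\mathrm{egs}(\lambda)\) genuinely vanishes for a positive proportion of the examples, so ``reduce the \(\lambda\)-torsion mod \(2\) and see the sum is a unit'' cannot be a formal consequence of additive reduction at \(2\); whatever makes \(\ell\equiv 13\bmod 16\) different must enter, and in Asai's proof it enters through the global functional equation and the quartic Gauss sum. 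The paper's own remark underlines how deep this is: an independent proof of the non-vanishing would essentially give a new proof of the Cassels--Matthews formula (or parity information about \(\Sha\)). So as written, the proposal establishes the easy shape \(\mathrm{egs}(\lambda)=A_{\lambda}\tilde{\lambda}^{3}\) with \(A_{\lambda}\in\mathbf{Z}[\i]\), but not that \(A_{\lambda}\) is a rational odd integer, nor the consequent \(\mathrm{egs}(\lambda)\neq 0\).
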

\begin{remark}{\rm
(1) This theorem is proved by using 
the functional equation for the Hecke \(L\)-series corresponding to 
the suitable Hecke character associated to \,\(\chi_{{}_\lambda}\)\, 
and the formula of Cassels-Matthews (see \cite{M}) for the classical quartic Gauss sum which appears as 
the root number of the functional equation. 
It is expected to have another prove the formula of Cassels-Matthews 
if we get a part of BSD conjecture including the parity of the order of the corresponding Tate-Shafarevich group. \\
(2) We call \,\(A_{\lambda}\)\, the \textit{coefficient} of \,\(\mathrm{egs}(\lambda)\)\, according to \cite{A}. 
}
\end{remark}
We recall the corresponding Hecke \(L\)-series. 
Still we are assuming \,\(\ell\equiv 13\bmod{16}\). 
Taking \,\(\{\,1,\,\i\,\}\)\, as a set of complete representatives of \,\(\left(\mathbf{Z}[\i]\,\big/\,(1+\i\,)^2\right)^{\times}\), 
we define
\begin{equation*}
{\chi_{{}_0}}(\alpha)=\varepsilon^2\ \ 
    \mbox{for} \ \ \alpha\equiv\varepsilon\bmod{(1+\i)^2}, \ \varepsilon\in\{\,1,\ \i\,\}, \ \ \ \ 
\widetilde{\chi}((\alpha))=\chi_{{}_\lambda}(\alpha)\,{\chi_{{}_0}}(\alpha)\,\overline{\alpha}.
\end{equation*}%
Then \,\(\widetilde{\chi}\)\, is a Hecke character of conductor \ \(\left((1+\i)^2\lambda\right)\). 
Now we have the following expression given by Asai \cite{A} for the central value of the corresponding Hecke \(L\)-series. 
\begin{theorem}\label{Hecke_L_val}
We have \ 
\(L(1,\,\widetilde{\chi})=-\varpi\,(1-\i)^{-1}\chi_{{}_\lambda}(2)\lambda^{-1}\,\mathrm{egs}(\lambda)\).          
\end{theorem}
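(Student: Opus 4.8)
The plan is to evaluate $L(1,\widetilde\chi)$ by the classical Hecke unfolding: rewrite the Hecke $L$-series as an Eisenstein--Kronecker lattice sum over $\mathbf{Z}[\i]$, continue it to $s=1$ by a Kronecker limit formula, and recognise the resulting combination of torsion values of $\mathbf{Z}[\i]$-periodic elliptic functions as $\mathrm{egs}(\lambda)$. First I would use that $\mathbf{Z}[\i]$ is a principal ideal domain and that, by construction, the summand of a Hecke $L$-series is invariant under the units $\{\pm1,\pm\i\}$, to write
\[
L(s,\widetilde\chi)=\frac14\sum_{\alpha}\chi_{{}_\lambda}(\alpha)\,\chi_{{}_0}(\alpha)\,\overline\alpha\,(\alpha\overline\alpha)^{-s},
\]
the sum running over the $\alpha\in\mathbf{Z}[\i]\setminus\{0\}$ prime to $(1+\i)\lambda$. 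Since $\chi_{{}_\lambda}$ depends only on $\alpha\bmod\lambda$ and $\chi_{{}_0}$ only on $\alpha\bmod(1+\i)^2$, I would split this into residue classes modulo $\mathfrak m:=(1+\i)^2\lambda$, use the Chinese Remainder isomorphism together with the surjection of $\{\pm1,\pm\i\}$ onto $(\mathbf{Z}[\i]/(1+\i)^2)^\times=\{1,\i\}$, and collapse the class sum into a sum over $r\in S$ of $\chi_{{}_\lambda}(r)$ times a partial Eisenstein--Kronecker series. The mismatch between the conductor $(1+\i)^2\lambda$ of $\widetilde\chi$ and the modulus $\lambda$ entering $\mathrm{egs}(\lambda)$ is exactly what will produce the factor $\chi_{{}_\lambda}(2)$, through $(1+\i)^2=2\i$ and the multiplicativity of $\chi_{{}_\lambda}$.

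For a residue $\rho$ prime to $\mathfrak m$, the partial sum $\sum_{\alpha\equiv\rho\,(\mathfrak m)}\overline\alpha\,(\alpha\overline\alpha)^{-s}$ equals $\overline{\mathfrak m}\,(\mathfrak m\overline{\mathfrak m})^{-s}$ times the weight-one Eisenstein--Kronecker series $\sum_{\omega\in\mathbf{Z}[\i]}\overline{(\rho/\mathfrak m+\omega)}\,\bigl|\rho/\mathfrak m+\omega\bigr|^{-2s}$, no term of which vanishes since $\rho/\mathfrak m\notin\mathbf{Z}[\i]$. Its continuation to $s=1$ is governed by Kronecker's (second) limit formula; the key analytic input --- classically due to Eisenstein and Hurwitz for the lemniscatic lattice, and in the needed generality a case of Damerell's theorem --- is that the limit equals $\varpi$ times the value at the relevant torsion point of the logarithmic $\bar z$-derivative of the Weierstrass $\sigma$-function of $\varOmega=(1-\i)\varpi\,\mathbf{Z}[\i]$, which under the substitution $u\mapsto\mathrm{sl}((1-\i)\varpi u)=\varphi(u)$ becomes $\varphi$ evaluated at the matching representative. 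Reassembling the residue classes, and using the $\mathbf{Z}[\i]$-periodicity of $\varphi$ and its complex multiplication by $\i$ to bring every argument back into $\frac1\lambda S$, would then express $L(1,\widetilde\chi)$ as an explicit constant times $\sum_{r\in S}\chi_{{}_\lambda}(r)\,\varphi(r/\lambda)=\mathrm{egs}(\lambda)$.

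The hard part will be the exact determination of that constant. The prefactor $\overline{\mathfrak m}(\mathfrak m\overline{\mathfrak m})^{-1}$ equals $-\i\,\overline\lambda/(2\ell)=-\i/(2\lambda)$ by $\lambda\overline\lambda=\ell$, which already accounts for the $\lambda^{-1}$; this must be combined with the $\frac14$ from the units, the covolume normalisation in the Kronecker limit formula (which brings in $\varpi$), the scaling $(1-\i)$ of the period lattice, and the $\chi_{{}_\lambda}(2)$ from the conductor correction, in such a way as to reproduce $-\varpi(1-\i)^{-1}\chi_{{}_\lambda}(2)\lambda^{-1}$ exactly, signs included; one also has to verify that the $\gamma(S)$-dependent choices defining $S$ and $\tilde\lambda$ cancel from the final identity, as they must since the left side is intrinsic. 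A helpful consistency check: combined with Theorem~\ref{egs_Asai} and $\tilde\lambda^4=-\lambda$, the claimed formula reads $L(1,\widetilde\chi)=\varpi\,(1-\i)^{-1}\chi_{{}_\lambda}(2)\,A_\lambda\,\tilde\lambda^{-1}$, exhibiting the central value as $\varpi$ times an element of $\mathbf{Q}(\i,\varLambda)$ of the predicted $\lambda$-adic valuation --- exactly the shape forced by Damerell's theorem, so any error in the constants would surface there.
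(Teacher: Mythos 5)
You should note first that the paper contains no proof of Theorem \ref{Hecke_L_val} to compare against: it is quoted verbatim from Asai \cite{A} (``the following expression given by Asai''), so the only question is whether your sketch would itself establish the identity. Your overall plan --- unfold $L(s,\widetilde{\chi})$ into weight-one Eisenstein--Kronecker sums over residue classes modulo $(1+\i)^2\lambda$, continue to $s=1$, and reassemble the torsion values into $\mathrm{egs}(\lambda)$ --- is indeed the classical route, the same circle of ideas used in \cite{A} and going back to \cite{BSD}. But as written it has two genuine gaps.

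First, the analytic identification is misstated at the crucial point: the value at $s=1$ of $\sum_{\omega\in\mathbf{Z}[\i]}\overline{(z+\omega)}\,|z+\omega|^{-2s}$ is a value of the \emph{non-holomorphic} weight-one Eisenstein series, i.e.\ of the Weierstrass zeta function of $\varOmega$ corrected by quasi-period and area terms (linear in $z$ and $\bar z$); it is not a value of the elliptic function $\mathrm{sl}$, whose divisor has poles at $\varpi/(1+\i)$ and $\i\varpi/(1+\i)$ and which is not a logarithmic derivative of $\sigma$. Passing from the $\chi_{{}_\lambda}$-weighted sum of these zeta-type values to $\sum_{r\in S}\chi_{{}_\lambda}(r)\,\varphi(r/\lambda)$ --- including the cancellation of the non-holomorphic and quasi-period contributions against the quartic character and the use of the CM action of $\i$ together with the $1/4$-set structure --- is precisely the nontrivial computation, and your sketch does not address it. Second, you explicitly defer ``the exact determination of that constant''; yet the exact constant $-\varpi\,(1-\i)^{-1}\chi_{{}_\lambda}(2)\,\lambda^{-1}$, sign included, \emph{is} the content of the theorem, since the qualitative shape (period times an algebraic number of the right $\lambda$-adic size) is already forced by Damerell's theorem, as your own consistency check observes. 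With both the conversion to $\mathrm{egs}(\lambda)$ and the constant left unverified, the proposal reproduces the framework in which \cite{A} proves the formula, but does not prove it.
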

Searching an elliptic curve whose conductor is the square of that of \(\widetilde{\chi}\) 
(see \cite{ST}, Theorem 12), 
we see that the elliptic curve corresponding to \,\(L(s,\widetilde{\chi})\)\, is 
\begin{equation}\label{model_13}
\mathscr{E}_{-\lambda}\,:\, y^2=x^3+\lambda x, \ \ \ (\,\lambda\overline{\lambda}=\ell\equiv 13 \bmod{16}\,).
\end{equation}
Deuring \cite{D} showed that%
\begin{equation}\label{Deuring}
L_{\mathscr{E}_{-\lambda}/\mathbf{Q}(\i)}(s)=L(s,\,\widetilde{\chi})\,L(s,\,\overline{\widetilde{\chi}}\,).
\end{equation}
Especially, if \,\(\ell\equiv 13\bmod{16}\), then 
\,\(\mathrm{rank}\,\mathscr{E}_{-\lambda}\left(\mathbf{Q}(\i)\right)=0\),
which is shown by Theorems \ref{egs_Asai}, \ref{Hecke_L_val}, 
and Coates-Wiles theorem in \cite{CW}. 
Moreover, we recall the following result from \cite{O}. 
\begin{proposition}\label{TS_and_A}
If the full statement of BSD conjecture for the curve \,\(\mathscr{E}_{-\lambda}\)\, 
is true, then \,
\(\cardinarity\,\Sha\big(\mathscr{E}_{-\lambda}/\mathbf{Q}(\i)\big)={A_{\lambda}}^2\). 
\end{proposition}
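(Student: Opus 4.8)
\noindent The plan is to invoke the full Birch--Swinnerton-Dyer (BSD) formula for $\mathscr{E}_{-\lambda}$ over $\mathbf{Q}(\i)$ and to evaluate every factor in closed form.

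Since $\ell\equiv 13\bmod 16$, the rank statement established just after (\ref{Deuring}) gives $\rank\mathscr{E}_{-\lambda}(\mathbf{Q}(\i))=0$, so the Mordell--Weil regulator equals $1$ and BSD reads
\begin{equation*}
L\bigl(\mathscr{E}_{-\lambda}/\mathbf{Q}(\i),1\bigr)
=\frac{|d_{\mathbf{Q}(\i)}|^{-1/2}\;\Omega\;\bigl(\prod_{v}c_v\bigr)\;\cardinarity\,\Sha\bigl(\mathscr{E}_{-\lambda}/\mathbf{Q}(\i)\bigr)}
{\bigl(\cardinarity\,\mathscr{E}_{-\lambda}(\mathbf{Q}(\i))_{\mathrm{tors}}\bigr)^{2}},
\end{equation*}
where, with the usual normalisations, $\Omega=\int_{\mathscr{E}_{-\lambda}(\mathbf{C})}|\omega\wedge\overline{\omega}|$ for a global N\'eron differential $\omega$, the $c_v$ are the local Tamagawa numbers, and $|d_{\mathbf{Q}(\i)}|=4$. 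I would then evaluate both sides.

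\emph{The $L$-value.} By Deuring's factorisation (\ref{Deuring}) and the fact that $\overline{\widetilde{\chi}}$ is the complex conjugate of $\widetilde{\chi}$ (whence $L(1,\overline{\widetilde{\chi}})=\overline{L(1,\widetilde{\chi})}$), one gets $L(\mathscr{E}_{-\lambda}/\mathbf{Q}(\i),1)=|L(1,\widetilde{\chi})|^{2}$. Feeding in Theorem \ref{Hecke_L_val} and Theorem \ref{egs_Asai} and using $|1-\i|^{2}=2$, $|\chi_{{}_\lambda}(2)|=1$, $|\lambda|^{2}=\ell$, and $|\tilde{\lambda}|^{4}=|\lambda|=\sqrt{\ell}$ from $\tilde{\lambda}^{4}=-\lambda$ in (\ref{varLambda_lambda}), this becomes
\begin{equation*}
L\bigl(\mathscr{E}_{-\lambda}/\mathbf{Q}(\i),1\bigr)=\frac{\varpi^{2}}{2\ell}\,|\mathrm{egs}(\lambda)|^{2}=\frac{\varpi^{2}}{2\ell}\,A_{\lambda}^{\,2}\,|\tilde{\lambda}|^{6}=\frac{\varpi^{2}\,A_{\lambda}^{\,2}}{2\,\ell^{1/4}}.
\end{equation*}

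\emph{The arithmetic side.} Three computations remain. (i) \emph{Torsion}: $(0,0)$ is a rational $2$-torsion point, the remaining $2$-torsion lives over $\mathbf{Q}(\i,\sqrt{-\lambda})\neq\mathbf{Q}(\i)$, halving $(0,0)$ would require $\sqrt{\lambda}\notin\mathbf{Q}(\i)$, and the CM structure excludes odd torsion, so $\mathscr{E}_{-\lambda}(\mathbf{Q}(\i))_{\mathrm{tors}}=\mathbf{Z}/2\mathbf{Z}$. (ii) \emph{Tamagawa numbers}: from $\Delta=-64\lambda^{3}=(1+\i)^{12}\lambda^{3}$ the bad primes are $\lambda$ and $1+\i$; at $\lambda$ one has $\ord_\lambda(c_4)=1$, $\ord_\lambda(\Delta)=3$, so $y^{2}=x^{3}+\lambda x$ is minimal there of Kodaira type $\mathrm{III}$ with $c_\lambda=2$; at $1+\i$ (residue characteristic $2$) one must run Tate's algorithm carefully to settle minimality, the Kodaira type, and $c_{1+\i}$. (iii) \emph{Period}: over $\mathbf{C}$ the change of variables $(x,y)\mapsto(\lambda^{1/2}x,\lambda^{3/4}y)$ identifies $y^{2}=x^{3}+x$ with $\mathscr{E}_{-\lambda}$ and pulls $dx/y$ back to $\lambda^{-1/4}\,dx/y$, so the period lattice of $dx/y$ on $\mathscr{E}_{-\lambda}$ is $\lambda^{-1/4}L_0$ with $L_0$ the lemniscatic lattice of $y^{2}=x^{3}+x$; hence its covolume is $\ell^{-1/4}\mathrm{covol}(L_0)$, and by (\ref{expansion_sl}) $\mathrm{covol}(L_0)$ is a rational multiple of $\varpi^{2}$, so $\Omega$ is a rational multiple of $\ell^{-1/4}\varpi^{2}$, the rational part being a power of $2$ dictated by the $(1+\i)$-adic minimal model.

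\emph{Conclusion.} Inserting these into the BSD formula, the powers of $\ell$ and of $\varpi$ on the two sides agree automatically, and the remaining equation --- involving only powers of $2$ (from $\Omega$, from $|d_{\mathbf{Q}(\i)}|^{-1/2}=\tfrac12$, and from $\cardinarity\,\mathscr{E}_{-\lambda}(\mathbf{Q}(\i))_{\mathrm{tors}}=2$) and the Tamagawa numbers $c_\lambda=2$, $c_{1+\i}$ --- forces $\cardinarity\,\Sha(\mathscr{E}_{-\lambda}/\mathbf{Q}(\i))=A_{\lambda}^{\,2}$; this is consistent, $A_\lambda$ being an odd integer and $\cardinarity\,\Sha$ a perfect square. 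The hard part is the prime $1+\i$: in residue characteristic $2$ the determination of the minimal Weierstrass model, the Kodaira type, the Tamagawa number $c_{1+\i}$, and the exact power of $1+\i$ relating $dx/y$ to the N\'eron differential are all delicate, and it is precisely this $2$-adic bookkeeping that must balance for the clean identity $\cardinarity\,\Sha=A_{\lambda}^{2}$ to emerge.
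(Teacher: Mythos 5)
The paper does not actually prove this proposition here: it is recalled from \cite{O}, so there is no internal argument to compare with, and your proposal must stand on its own as a proof. As such it is a plan rather than a proof, and the gap is exactly where you say it is. You correctly assemble the analytic side: Deuring's factorisation plus Theorem \ref{Hecke_L_val}, Theorem \ref{egs_Asai} and \(\tilde{\lambda}^4=-\lambda\) do give \(L(\mathscr{E}_{-\lambda}/\mathbf{Q}(\i),1)=\varpi^{2}A_{\lambda}^{2}/(2\ell^{1/4})\), the regulator is \(1\) by the rank-zero statement, and the torsion is \(\mathbf{Z}/2\mathbf{Z}\) (this also follows from Lemma \ref{nagell} with \(A=-\lambda\)). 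But the conclusion is then asserted, not derived: you never compute the minimal model at \(1+\i\) (note \(\mathrm{ord}_{1+\i}(\Delta)=12\) for \(y^{2}=x^{3}+\lambda x\), so minimality is genuinely in question), the Kodaira type and Tamagawa number \(c_{1+\i}\), the exact power of \(1+\i\) relating \(dx/y\) to the N\'eron differential, or the precise rational constant in the period (you only say \(\Omega\) is ``a rational multiple'' of \(\ell^{-1/4}\varpi^{2}\), a power of \(2\) ``dictated by'' the minimal model). Without these numbers the BSD formula only yields \(\cardinarity\,\Sha(\mathscr{E}_{-\lambda}/\mathbf{Q}(\i))=2^{k}A_{\lambda}^{2}\) for some unknown integer \(k\), and the remark that \(A_{\lambda}\) is odd while \(\cardinarity\,\Sha\) is a square does not force \(k=0\) (it only forces \(k\) to be even, and even that presupposes \(\Sha\) finite of square order rather than deriving it). So the decisive \(2\)-adic bookkeeping, which is the actual content of the proposition, is missing.

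A secondary caution: the clean identity also depends on the normalisation conventions in the BSD formula over the imaginary quadratic field (whether the archimedean period is \(\int_{\mathscr{E}(\C)}|\omega\wedge\overline{\omega}|\) or twice it, and the factor \(|d_{\mathbf{Q}(\i)}|^{-1/2}=\tfrac12\)); these contribute further powers of \(2\) that must be tracked with the same care as \(c_{1+\i}\) and the torsion, and in your write-up they are folded into the same unverified ``balance''. To complete the argument you would need to carry out Tate's algorithm at \(1+\i\), fix the N\'eron differential, and exhibit the resulting identity of rational numbers explicitly, as is done in the cited source \cite{O}.
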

\vskip 5pt
\section{Some congruence on the coefficients of elliptic Gauss sums}\label{end_of_8n+5}\ 
\vskip 5pt
\noindent
The former part of the following theorem is proved in \cite{O}
and reproved a sophisticated method as Lemma \ref{0214d} later. 
Let \,\(C_n\)\, be the coefficient of \,\(u^j\)\, defined by {\rm(\ref{expansion_sl})}. 
Since \,\(\frac34(\ell-1)!\,C_{\frac34(\ell-1)}\)\, is in \,\(\mathbf{Z}\), 
\,\(-\frac14\,C_{\frac34(\ell-1)}\)\, is in \,\(\mathbf{Z}_{\ell}\). 
\begin{theorem}\label{O_congr13}
\!\!\!{\rm (\cite{O})} \ 
Assuming \ \(\ell\equiv 13\ \bmod{16}\), \,we have
\,\(A_{\lambda}\equiv-\tfrac14\,C_{\frac34(\ell-1)}\,\bmod{\ell}\). 
The absolutely minimal residue of the right hand side is exactly equal to \,\(A_{\lambda}\). 
\end{theorem}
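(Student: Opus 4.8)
The plan is to connect the two sides of the congruence through the common intermediary of Hecke $L$-values at $s=1$, evaluated $\lambda$-adically. First I would recall (from Theorem~\ref{Hecke_L_val} and Theorem~\ref{egs_Asai}) that $L(1,\widetilde{\chi})$ is, up to the explicit unit factor $-\varpi(1-\i)^{-1}\chi_{{}_\lambda}(2)\lambda^{-1}$, equal to $\mathrm{egs}(\lambda)=A_{\lambda}\,\widetilde{\lambda}^{\,3}$; since $\widetilde{\lambda}^{\,4}=-\lambda$ and $\lambda$ generates the maximal ideal of $\mathscr{O}_{\lambda}$, the quantity $A_{\lambda}$ records the leading $\lambda$-adic behaviour of this $L$-value. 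The point is to obtain a second, independent expression for that same leading term in which the coefficient $C_{\frac34(\ell-1)}$ of $\mathrm{sl}(u)$ appears. For this I would use the standard device of writing the elliptic Gauss sum $\sum_{r\in S}\chi_{{}_\lambda}(r)\,\varphi(r/\lambda)$ as a congruence sum modulo $\lambda$: each value $\varphi(r/\lambda)=\mathrm{sl}\big((1-\i)\varpi r/\lambda\big)$ is expanded in its power series $\sum_n C_n\big((1-\i)\varpi\big)^n (r/\lambda)^n$, and one reduces modulo $\lambda$ so that only finitely many terms survive. Because $n!\,C_n\in\mathbf{Z}$ and $C_n=0$ unless $n\equiv 1\bmod 4$, the surviving exponent is governed by the size of $\ell$; the term with $n=\tfrac34(\ell-1)$ is exactly the one whose denominator contributes a factor of $\ell$ in a controlled way — this is why the normalization $-\tfrac14 C_{\frac34(\ell-1)}$ (which lies in $\mathbf{Z}_{\ell}$ as noted before the statement) enters.

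Concretely, the key steps in order are: (1) expand $\mathrm{egs}(\lambda)$ termwise using the series for $\mathrm{sl}$, and collect the contribution of each monomial $u^n$ into a character sum $\sum_{r\in S}\chi_{{}_\lambda}(r)\,r^{\,n}$ over the $1/4$-set $S$; (2) evaluate these character sums modulo $\lambda$ — by the classical evaluation of power sums of units against a quartic character, $\sum_{r\in(\mathbf{Z}[\i]/\lambda)^\times}\chi_{{}_\lambda}(r)r^{\,n}$ vanishes modulo $\lambda$ unless $n$ lies in a specific residue class modulo $\ell-1$, and the $1/4$-set version differs by the unit factor $\gamma(S)$ appearing in (\ref{basic_materials}); (3) identify the unique surviving exponent in the relevant range as $n=\tfrac34(\ell-1)$, so that $\mathrm{egs}(\lambda)\equiv \big((1-\i)\varpi\big)^{n}\lambda^{-n}\,C_{n}\cdot(\text{character sum})\pmod{\text{higher order}}$; (4) divide by $\widetilde{\lambda}^{\,3}$ using $\widetilde{\lambda}^{\,4}=-\lambda$ to pass from $\mathrm{egs}(\lambda)$ to $A_{\lambda}$, and track all the unit factors ($\varpi$ modulo $\ell$, powers of $1-\i$, $\chi_{{}_\lambda}(2)$, $\gamma(S)$) to see that they assemble into the single rational constant $-\tfrac14$; (5) finally, since $A_{\lambda}\in\mathbf{Z}$ is odd and the congruence pins it modulo $\ell$, and since $|A_{\lambda}|$ is bounded by $\sqrt{\ell}$-type estimates coming from the Hasse bound / the fact that $A_{\lambda}^2=\cardinarity\,\Sha$ under BSD together with the comparison with the analytic data, the absolutely minimal residue of $-\tfrac14 C_{\frac34(\ell-1)}$ modulo $\ell$ must literally be $A_{\lambda}$.

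The main obstacle will be step (4): keeping exact track of the constant. Getting the congruence up to an undetermined $\ell$-adic unit is comparatively routine, but showing that the unit is precisely $-\tfrac14$ requires pinning down several normalizations simultaneously — the sign in $\widetilde{\lambda}^{\,4}=-\lambda$, the precise definition of $\gamma(S)$ and its interaction with $\prod_{r\in S}r$, the Gauss-sum normalization implicit in $\chi_{{}_\lambda}(2)$, and the $\lambda$-adic value of $\varpi$ (which by the Chowla–Selberg / period relation reduces to a rational factor modulo $\ell$). I expect this to be handled exactly as in the nonvanishing case of \cite{O}, essentially verbatim, since for $\ell\equiv 13\bmod 16$ the relevant elliptic Gauss sum is known not to vanish (Theorem~\ref{egs_Asai}) and the whole computation is unobstructed; this is indeed why the theorem is attributed to \cite{O} and only re-proved later, in a cleaner form, as Lemma~\ref{0214d}. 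A secondary subtlety is justifying that no lower exponent $n<\tfrac34(\ell-1)$ with $n\equiv 1\bmod 4$ contributes — this follows because for such $n$ the integer $n!\,C_n$ is not divisible by $\ell$ while $\lambda^{-n}$ contributes only $-n$ to the order, whereas the character sum contributes nothing modulo $\lambda$ unless $n\equiv \tfrac34(\ell-1)\bmod(\ell-1)$; combining these one sees the first relevant term is exactly at $n=\tfrac34(\ell-1)$.
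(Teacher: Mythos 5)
Your outline follows the right heuristic --- orthogonality of \(\chi_{{}_\lambda}\) picks out exponents \(n\equiv\frac34(\ell-1)\bmod(\ell-1)\), the leading coefficient is \(C_{\frac34(\ell-1)}\), and division by \(\tilde{\lambda}^3\) converts \(\mathrm{egs}(\lambda)\) into \(A_{\lambda}\) --- but the way you propose to make it rigorous has a genuine gap at your steps (1)--(4). You expand \(\varphi(r/\lambda)=\mathrm{sl}\big((1-\i)\varpi r/\lambda\big)\) termwise as \(\sum_n C_n\big((1-\i)\varpi\big)^n(r/\lambda)^n\) and then ``reduce modulo \(\lambda\)''. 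This is not meaningful: the individual terms are transcendental (they contain \(\varpi^{\,n}\)), and \(\lambda^{-n}\) has unboundedly negative \(\ell\)-adic order, so there is no \(\lambda\)-adic convergence and no termwise reduction; in particular the claim in (4) that ``the \(\lambda\)-adic value of \(\varpi\) reduces to a rational factor modulo \(\ell\) by Chowla--Selberg'' has no content, since \(\varpi\) has no \(\lambda\)-adic value. The paper's argument (Lemmas \ref{0213a2} and \ref{0214d}, used exactly as in the proof of Theorem \ref{cong_for_1_mod_8}) exists precisely to avoid this: one trades the complex parametrization for the \(\ell\)-adic one, writing \(\varLambda=\varphi(1/\lambda)=\iota(\eta)\) with \(\eta\) a Lubin--Tate \(\lambda\)-division point satisfying \(\eta^{\ell-1}=-\lambda\), so that \(\varphi(\alpha/\lambda)=(\mathrm{sl}\circ\zeta f_0)(\eta)\) with \(\zeta\in\Z_\ell\) a root of unity congruent to \(\alpha\) mod \(\lambda\); the averaged series \(\mathrm{Sl}\circ f_0\) lies in \(\Z_\ell[[x]]\), and \(f_0(x)\equiv x\) modulo degree \(\ell\) yields \(\mathrm{egs}(\lambda)\equiv\frac{\ell-1}{4}\,C_{\frac34(\ell-1)}\,\eta^{\frac34(\ell-1)}\bmod{\eta^{\ell}}\). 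Dividing by \(\tilde{\lambda}^{3}\equiv\eta^{\frac34(\ell-1)}\) and using \(\mathrm{egs}(\lambda)=A_\lambda\tilde{\lambda}^3\) (Theorem \ref{egs_Asai}) gives \(A_\lambda\equiv-\frac14 C_{\frac34(\ell-1)}\) modulo a positive power of \(\eta\), and rationality of both sides upgrades this to a congruence mod \(\ell\). Without this substitution \(u=f_0(x)\) (or an equivalent formal-group device), your steps (1)--(3) cannot be carried out.

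There is a second gap in your step (5), the minimality assertion. You appeal to a ``\(\sqrt{\ell}\)-type bound from the Hasse bound / \(A_\lambda^2=\) order of the Tate--Shafarevich group under BSD''. The Hasse bound is irrelevant here (\(A_\lambda\) is not a Frobenius trace), and invoking BSD would make this part of the theorem conditional, whereas it is stated unconditionally. What is actually needed is Lemma \ref{less_than_l_0}, \(|A_\lambda|<\tfrac12\ell\), which the paper proves analytically: the estimate \(|L(1,\widetilde{\chi})|<4/\big(e^{\pi/|\beta|}-1\big)\) of Lemma \ref{Koblitz_type} (obtained from the theta-function functional equation), combined with the expression of \(L(1,\widetilde{\chi})\) as an explicit unit times \(\lambda^{-1}\mathrm{egs}(\lambda)\) and with \(\mathrm{egs}(\lambda)=A_\lambda\tilde{\lambda}^3\), bounds \(|A_\lambda|\) by a constant multiple of \(|\lambda|^{5/4}\), which is below \(\ell/2\) for all but finitely many \(\ell\), the remaining cases being checked from Asai's tables. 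Some such unconditional bound must be supplied before you may conclude that the absolutely minimal residue of \(-\tfrac14 C_{\frac34(\ell-1)}\) modulo \(\ell\) is literally \(A_\lambda\).
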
%
\noindent
The latter part of Theorem \ref{O_congr13} follows from the former part and 
the following lemma which is proved in \S \ref{estimate}. 
\begin{lemma}\label{less_than_l_0}
For any \(\ell=\lambda\overline{\lambda}\equiv1\mod{4}\), we have 
\,\(|A_{\lambda}|<\tfrac12\,\ell\). 
\end{lemma}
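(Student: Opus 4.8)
The plan is to obtain an explicit analytic upper bound for $|A_\lambda|$ by going back to the definition of $\mathrm{egs}(\lambda)$ and the identity $\mathrm{egs}(\lambda)=A_\lambda\,\tilde\lambda^3$ from Theorem \ref{egs_Asai}. Taking absolute values (in a fixed complex embedding) gives $|A_\lambda|=|\mathrm{egs}(\lambda)|\,/\,|\tilde\lambda|^3$. By the third relation in (\ref{varLambda_lambda}) we have $|\tilde\lambda|^4=|\lambda|=\sqrt\ell$, so $|\tilde\lambda|^3=\ell^{3/8}$, and hence $|A_\lambda|=\ell^{-3/8}\,|\mathrm{egs}(\lambda)|$. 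It therefore suffices to show $|\mathrm{egs}(\lambda)|<\tfrac12\,\ell^{11/8}$, or more crudely to bound each of the $(\ell-1)/4$ terms $\chi_{{}_\lambda}(r)\,\varphi(r/\lambda)$ appearing in the sum. Since $\chi_{{}_\lambda}(r)$ is a fourth root of unity, $|\chi_{{}_\lambda}(r)|=1$, so the triangle inequality reduces everything to an upper bound for $|\varphi(r/\lambda)|$ as $r$ ranges over the $1/4$-set $S$.

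Next I would estimate $|\varphi(z)|=|\mathrm{sl}((1-\i)\varpi z)|$ for the relevant arguments. The key point is that the arguments $r/\lambda$ (with $r\in S$, a set of residues mod $\lambda$) can be chosen to lie in a bounded fundamental-domain-type region for the lattice $\mathbf{Z}[\i]$, on which $\mathrm{sl}$ (an elliptic function with lattice $\varOmega=(1-\i)\varpi\,\mathbf{Z}[\i]$) is holomorphic away from its poles, or more to the point one uses that $\varphi$ has period lattice $\mathbf{Z}[\i]$ and hence a genuine maximum modulus $M:=\max|\varphi|$ on any compact set avoiding its two poles; combined with the fact that, for $r\in S$ not too close to $0$, the value $r/\lambda$ stays away from the pole locus, one gets a uniform constant. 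The cleanest route is probably to bound $|\mathrm{egs}(\lambda)|$ by $\tfrac{\ell-1}{4}\cdot M$ with an absolute $M$, and then check that $\tfrac{\ell-1}{4}\cdot M \le \tfrac12 \ell^{11/8}$ holds once $\ell$ exceeds some small explicit bound; the finitely many remaining primes $\ell$ are then handled by the tables in \cite{A} (or by direct computation of $A_\lambda$), where one sees the inequality $|A_\lambda|<\tfrac12\ell$ holds in every case.

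I expect the main obstacle to be making the bound on $|\varphi(r/\lambda)|$ genuinely uniform and explicit: one must ensure the chosen $1/4$-set $S$ can be taken with representatives $r$ whose ratios $r/\lambda$ avoid a fixed neighborhood of the poles of $\varphi$ (equivalently, of the zeros of the relevant denominator), so that the maximum modulus is controlled by an absolute constant independent of $\ell$. This is a matter of choosing representatives of the residues $r$ in a small-norm region — $|r|<\sqrt\ell$ can be arranged — so that $|r/\lambda|<1$ and the argument stays in a fixed compact subset of $\mathbf{C}\setminus(\text{poles})$. Once that is set up, the archimedean estimate for the lemniscatic function is classical (it follows from the addition formula for $\mathrm{sl}$, or from its product/partial-fraction expansion), and the remaining arithmetic $\ell^{11/8}$ versus $(\ell-1)/4$ comparison is elementary. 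I would also double-check the borderline small primes $\ell\equiv 1\bmod 4$ by hand, since the $\ell^{11/8}$ growth only dominates $\ell$ linearly with a constant that is not overwhelmingly large.
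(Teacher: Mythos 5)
Your reduction collapses at the step where you invoke ``a genuine maximum modulus $M:=\max|\varphi|$'' that is uniform in $\ell$: no such absolute constant exists, and no choice of the $1/4$-set $S$ can create one. Since $\varphi$ has period lattice $\mathbf{Z}[\i]$, the value $\varphi(r/\lambda)$ depends only on the residue class of $r$ modulo $\lambda$, and $|\varphi(\varepsilon r/\lambda)|=|\varphi(r/\lambda)|$ for every $\varepsilon\in\vec{\mu}_4$, so changing representatives of $S$ changes none of the absolute values in the sum. Now $\varphi$ has simple poles at $u\equiv\tfrac12,\ \tfrac{\i}{2}\pmod{\mathbf{Z}[\i]}$, and among the $\lambda$-division points there is always one at distance exactly $1/(2|\lambda|)$ from such a pole: writing $r/\lambda-\tfrac12-m=(2r-(1+2m)\lambda)/(2\lambda)$, one can solve $2r-(1+2m)\lambda=1$ with $r$ prime to $\lambda$ because $\lambda\equiv1\bmod 2$. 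The corresponding term therefore has size of order $\sqrt{\ell}$, not $O(1)$. Moreover, even after replacing your false uniform $M$ by the correct worst-case bound $O(\sqrt{\ell}\,)$ per term, the plain triangle inequality over the $(\ell-1)/4$ terms only gives $|\mathrm{egs}(\lambda)|=O(\ell^{3/2})$, which is larger than the target $\tfrac12\,\ell^{11/8}$ (your setup with $|\tilde{\lambda}|^3=\ell^{3/8}$ is fine, but $\ell^{3/2}/\ell^{11/8}=\ell^{1/8}\to\infty$), so the argument as organized cannot close. To rescue it you would need the finer fact that only $O(k^2)$ division points lie within $k/|\lambda|$ of a pole, whence $\sum_{r}|\varphi(r/\lambda)|=O(\ell)$ (the simple pole is area-integrable), with an explicit constant small enough that the finitely many remaining primes are covered by the tables in \cite{A}; you would also have to treat $\ell\equiv1\bmod 8$, where $\mathrm{egs}(\lambda)$ is built from $\psi$ and Theorem \ref{egs_A_lambda}, not from $\varphi$ and Theorem \ref{egs_Asai}.

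For comparison, the paper sidesteps the archimedean analysis of the elliptic function entirely: it bounds the $L$-value by $|L(1,\widetilde{\chi})|\le 4/(e^{\pi/|\beta|}-1)<4|\beta|/\pi$ via the theta-series functional equation (Lemma \ref{Koblitz_type}), then converts this into a bound on $|A_{\lambda}|$ through the closed formulas (\ref{Hecke_L_egs}) and Theorem \ref{Hecke_L_val} relating $L(1,\widetilde{\chi})$ to $\mathrm{egs}(\lambda)=A_{\lambda}\tilde{\lambda}^3$, obtaining $|A_{\lambda}|<(16\sqrt{2}/\pi\varpi)\,\ell^{5/8}$, which is below $\tfrac12\,\ell$ for $\ell\ge 97$; the smaller primes are read off from the tables in \cite{A}. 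If you want an analytic estimate with manageable constants, that $L$-value route is the one to follow.
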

\begin{remark}{\rm
Observing Kanou's manmouth table, 
the behavior of \,\(|\,\mathrm{egs}({\lambda})\,|\)\, with respect to \,\(\ell\to\infty\)\, 
is quite small. 
Indeed, the estimation  \,\(|\,A_{\lambda}\,|<\ell^{1/4}\)\, is hopeful. 
}
\end{remark}
Joining Proposition \ref{TS_and_A} and Therem \ref{O_congr13} together, 
we have a natural generalization of Hurwitz' congruence in Theorem \ref{Hurwitz_congr}. 
\par
For the case of \,\(\ell\equiv 5\bmod{16}\), we have a similar story which is described in \cite{A} and \cite{O}. 
The corresponding ellipitic curve for this case is 
\begin{equation}\label{model_5}
\mathscr{E}_{\frac14\lambda}\,:\,y^2=x^3-\tfrac14\lambda x, \ \ \ (\,\lambda\overline{\lambda}=\ell\equiv 5 \bmod{16}\,),
\end{equation}
for which we have \,\(\mathrm{rank}\ \mathscr{E}_{\frac14\lambda}\left(\mathbf{Q}(\i)\right)=0\), 
and the corresponding congruence as (\ref{O_congr13}). 

So, from the next section, we proceed to the remaining case of \,\(\ell\equiv 1\bmod{\,8}\). 
About \,\(18\)\%  of the \,\(172\)\, examples of this case in \,\cite{A}, 
\(\mathrm{egs}(\lambda)=0\)\, 
holds, where the ellipitic Gauss sum for this case is defined in the next section. 
\vskip 10pt
\section{The foregoing researches in the case of \,\(\ell\equiv1\) mod \(8\)\label{8n+1}}\ 
\vskip 5pt
\noindent
From now on, we always assume the prime \,\(\ell\)\, satisfies \,\(\ell\equiv1\bmod{8}\),
\(\ell={\lambda}\overline{{\lambda}}\), 
\({\lambda}\equiv 1\bmod{(1+\i)^3}\), 
\(\chi_{{}_\lambda}(\nu)=\left(\dfrac{\nu}{\lambda}\right)_{\!4}\). 
Then we see \,\(\chi_{{}_\lambda}(\i)=\i^{\frac{\ell-1}4}=(-1)^{\frac{\ell-1}{8}}\). 
Using  \,\(\psi(u)=\mathrm{cl}\left((1-\i)\thinspace\varpi u\right)\), 
the elliptic Gauss sum for this case is defined by 
   \begin{equation*}
   \mathrm{egs}({\lambda})=\sum_{\nu\in S\cup\i S}\chi_{{}_\lambda}(\nu)\,\psi\bigg(\frac{\nu}{{\lambda}}\bigg). 
   \end{equation*}
In this paper \,\(\varepsilon\)\, always denotes any element in  \,\(\vec{\mu}_4\), 
where \,\(\vec{\mu}_4=\{1,\,-1,\,\i,\,-\i\}\). 
Recalling 
the canonical isomorphism \,\(\vec{\mu}_4\overset{\sim}{\rightarrow}\big(\mathbf{Z}[\i]/(1+\i)^3\big)^{\times}\), 
we define the character \,\(\chi_{{}_0}\)\, by
\begin{equation*}
\chi_{{}_0}(\alpha)=\varepsilon \ \ \ \mbox{if}\ \ \ \alpha\equiv\varepsilon\bmod{(1+\i)^3} \ \ \ \ \ \ 
(\ \alpha\in\mathbf{Z}[\i],\   (1+\i){\not|}\,\alpha\ ). 
\end{equation*}
(Case 1) \ If \,\(\ell\equiv1\bmod{16}\), \(\chi_{{}_\lambda}(\i)=1\). 
We define \,\(\chi_{{}_1}=\chi_{{}_\lambda}\chi_{{}_0}\)\, 
and \,\(\widetilde{\chi}((\alpha))=\chi_{{}_1}(\alpha)\,\overline{\alpha}\). \\
(Case 2) \ If \,\(\ell\equiv9\bmod{16}\), \(\chi_{{}_\lambda}(\i)=-1\). 
So defining \,\(\chi_{{}_1}=\chi_{{}_\lambda}\overline{\chi_{{}_0}}\)\, 
and \,\(\widetilde{\chi}((\alpha))=\chi_{{}_1}(\alpha)\,\overline{\alpha}\).
\par
In any case, we see \,\(\widetilde{\chi}\)\, is a Hecke character of conductor \ \(((1+\i)^3\lambda)\). 
Then, as in \cite{A}, we have the following expression :
\begin{equation}\label{Hecke_L_egs}
L(1,\widetilde{\chi})=(-1)^{\frac18(\ell-1)}\,\varpi\,
\overline{\chi_{{}_\lambda}(1+\i)}\,2^{-1}\lambda^{-1}\,\mathrm{egs}({\lambda}).     
\end{equation}
\begin{theorem}\label{egs_A_lambda}
{\rm (\cite{A})} Let \,\(\zeta_8=\exp(2\pi\i/8)\). 
There exists \,\(A_{\lambda}\)\, in \,\(\mathbf{Z}[\zeta_8]\)\, such that 
  \begin{equation}\label{model_1}
  \mathrm{egs}(\lambda)=A_{\lambda}\,{\tilde{\lambda}\,}^3.
  \end{equation}
Here, \,\(A_{\lambda}\)\, is given by the table \,{\rm(\ref{table1})}\, below 
with some \,\(a_{\lambda}\)\, in \,\(\mathbf{Z}\). 
\end{theorem}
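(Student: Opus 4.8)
The plan is to follow Asai's treatment of the case $\ell\equiv13\bmod16$ (Theorem~\ref{egs_Asai}): relate $\mathrm{egs}(\lambda)$ to the central value of the Hecke $L$-series $L(s,\widetilde{\chi})$, exploit the functional equation of the latter, and substitute the Cassels--Matthews evaluation of the classical quartic Gauss sum that occurs in its root number. First, since $\psi=\mathrm{cl}\bigl((1-\i)\varpi\,\cdot\,\bigr)$ is even and $\chi_{{}_\lambda}(-1)=\bigl(\tfrac{-1}{\lambda}\bigr)_{\!4}=1$ because $\ell\equiv1\bmod8$, the sum over $S\cup\i S$ doubles to a sum over all of $\bigl(\mathbf{Z}[\i]/(\lambda)\bigr)^{\times}$:
\[
\mathrm{egs}(\lambda)=\tfrac12\!\!\sum_{\substack{\nu\bmod{\lambda}\\ (1+\i)\nmid\nu}}\!\!\chi_{{}_\lambda}(\nu)\,\psi\!\Bigl(\tfrac{\nu}{\lambda}\Bigr),
\]
which is a genuine lemniscatic Gauss sum attached to $\chi_{{}_\lambda}$; in particular $\mathrm{egs}(\lambda)$ is an algebraic integer of the ray class field $\mathbf{Q}(\i,\varLambda)$. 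Using the identity~(\ref{Hecke_L_egs}) I then replace $\mathrm{egs}(\lambda)$ by $L(1,\widetilde{\chi})$.

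Next I write down the functional equation $\widehat{L}(s,\widetilde{\chi})=W(\widetilde{\chi})\,\widehat{L}(2-s,\widetilde{\chi}^{\vee})$ of the completed $L$-function (the infinity type of $\widetilde\chi$ is $\alpha\mapsto\overline\alpha$, so the symmetry is $s\leftrightarrow2-s$); at the centre $s=1$, where $\widehat{L}(1,\widetilde{\chi}^{\vee})=\overline{\widehat{L}(1,\widetilde{\chi})}$ up to a positive archimedean factor, this gives $\widehat{L}(1,\widetilde{\chi})^{2}=W(\widetilde{\chi})\,|\widehat{L}(1,\widetilde{\chi})|^{2}$, so the argument of $\mathrm{egs}(\lambda)$ is controlled by the root number $W(\widetilde{\chi})$. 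Now $W(\widetilde{\chi})$ is an elementary archimedean factor times $g_{\mathrm{fin}}(\widetilde{\chi})/\sqrt{N((1+\i)^{3}\lambda)}$, and by the Chinese Remainder Theorem $g_{\mathrm{fin}}(\widetilde{\chi})$ factors — up to an elementary cross term — into a Gauss sum over $\mathbf{Z}[\i]/(1+\i)^{3}$ (a root of unity times a power of $1+\i$, evaluated by hand, and here Case~1, $\chi_{{}_1}=\chi_{{}_\lambda}\chi_{{}_0}$, and Case~2, $\chi_{{}_1}=\chi_{{}_\lambda}\overline{\chi_{{}_0}}$, genuinely diverge) and the classical quartic Gauss sum $g_{\lambda}(\chi_{{}_\lambda})$. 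The square of a quartic Gauss sum is $\sqrt{\ell}$ times a Jacobi sum (a Gaussian prime above $\ell$), but the quartic Gauss sum itself is not elementary; its precise value is furnished by the Cassels--Matthews formula~(\cite{M}) in terms of lemniscatic division values — that is, in terms of $\tilde{\lambda}$ — where one uses $\tilde{\lambda}^{4}=-\lambda$ from~(\ref{varLambda_lambda}), so $|\tilde\lambda|=\ell^{1/4}$. Feeding this back through the functional equation and~(\ref{Hecke_L_egs}), the $\tilde\lambda$-, $\varpi$- and $\ell$-powers combine into the single factor $\tilde{\lambda}^{\,3}$ (exactly as in the $\ell\equiv13$ case), leaving a quotient $A_{\lambda}:=\mathrm{egs}(\lambda)/\tilde{\lambda}^{\,3}$ that is algebraic over $\mathbf{Q}$ with absolute value $O(\ell^{1/4})$ (cf.\ the Remark after Lemma~\ref{less_than_l_0}).

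It remains to locate $A_{\lambda}$ inside $\mathbf{Z}[\zeta_8]$. For the field of definition I run a Galois-descent argument inside $\mathbf{Q}(\zeta_8,\varLambda)=\mathbf{Q}(\zeta_8)\cdot\mathbf{Q}(\i,\varLambda)$: the main theorem of complex multiplication shows that $\sigma_{c}$ acts on the values $\psi(\tfrac{\nu}{\lambda})$ through $\nu\mapsto\alpha_c\nu$ for a representative $\alpha_c$ of $\mathfrak c$ (up to a unit in $\vec{\mu}_4$, where the classes $\ell\equiv1,9\bmod16$ differ because $\chi_{{}_\lambda}(\i)=(-1)^{(\ell-1)/8}$), so after reindexing $\sigma_{c}(\mathrm{egs}(\lambda))=\overline{\chi_{{}_\lambda}(\alpha_c)}\,\mathrm{egs}(\lambda)$; the normalisation by $\gamma(S)$ in~(\ref{basic_materials}) is designed precisely so that $\sigma_{c}(\tilde\lambda^{\,3})$ carries the same character factor, whence $A_{\lambda}$ is fixed by $\mathrm{Gal}(\mathbf{Q}(\zeta_8,\varLambda)/\mathbf{Q}(\zeta_8))$, i.e.\ $A_{\lambda}\in\mathbf{Q}(\zeta_8)$. (The $\zeta_8$ genuinely enters through $\gamma(S)$: for $\ell\equiv1\bmod8$ one only has $\gamma(S)^2\in\{\pm\i\}$, so $\gamma(S)$ is a primitive $8$th root of unity.) For integrality, since $\mathrm{egs}(\lambda)$ is integral it suffices to check $\tilde{\lambda}^{\,3}\mid\mathrm{egs}(\lambda)$ in $\mathscr{O}_{\lambda}$, i.e.\ $\ord(\mathrm{egs}(\lambda))\ge\tfrac34$ at $\lambda$ together with integrality at $\overline\lambda$ and $(1+\i)$; this follows from the vanishing of the low-order coefficients $\sum_{\nu}\chi_{{}_\lambda}(\nu)\,\nu^{k}\equiv0\bmod\lambda$ in the $\lambda$-adic (formal-group) expansion of $\psi(\tfrac{\nu}{\lambda})$ for $k$ below the relevant threshold — the same type of estimate carried out in \S\ref{estimate}. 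Hence $A_{\lambda}\in\mathbf{Z}[\zeta_8]$, and separating Case~1 from Case~2 and inserting the explicit value of the $(1+\i)^{3}$-Gauss-sum factor yields the shape recorded in table~(\ref{table1}) with $a_{\lambda}\in\mathbf{Z}$.

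The main obstacle is the exact — not merely absolute-value — evaluation of the quartic Gauss sum via Cassels--Matthews, together with the bookkeeping of the $\vec{\mu}_4$- and $\mu_8$-valued factors entering from the $\Gamma$-factors, from the $(1+\i)^{3}$-part of the conductor, and from the CM Galois action; the case $\ell\equiv9\bmod16$ is the hardest, since there $\chi_{{}_\lambda}(\i)=-1$ spoils the naïve well-definedness of $\chi_{{}_\lambda}(\alpha_c)$ on the ray class group and must be offset against the sign ambiguity of $\gamma(S)$ in~(\ref{basic_materials}). Establishing the sharp divisibility $\tilde{\lambda}^{\,3}\mid\mathrm{egs}(\lambda)$ in $\mathscr{O}_{\lambda}$ is the other place where genuine work is needed.
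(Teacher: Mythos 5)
The paper itself gives no proof of this statement: Theorem \ref{egs_A_lambda} is quoted from Asai \cite{A}, with only the remark that it is proved by combining the functional equation of \(L(s,\widetilde\chi)\) with the Cassels--Matthews formula for the quartic Gauss sum. Your outline follows exactly that route (doubling the sum over \(S\cup\i S\) to the full group, passing to \(L(1,\widetilde\chi)\) via (\ref{Hecke_L_egs}), functional equation and root number, Galois descent to \(\mathbf{Q}(\zeta_8)\), then integrality), so strategically you are aligned with the cited proof.

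As a proof, however, there is a genuine gap, and it sits exactly where the content of the theorem lies. The assertion is not merely that \(A_\lambda=\mathrm{egs}(\lambda)/\tilde\lambda^{\,3}\) lies in \(\mathbf{Z}[\zeta_8]\); it is that \(A_\lambda\) has the precise shape \(\i\sqrt2\,a_\lambda\), \(\sqrt2\,a_\lambda\), \(\zeta_8\,a_\lambda\) or \(\i\zeta_8\,a_\lambda\) with \(a_\lambda\in\mathbf{Z}\), sorted by \(\chi_{{}_\lambda}(1+\i)\) and \(\ell\bmod 16\) as in table (\ref{table1}). Pinning that down requires the exact (not absolute-value) evaluation of the root number --- the \((1+\i)^3\)-local Gauss sum and above all the Cassels--Matthews value of the quartic Gauss sum --- matched against the normalization \(\gamma(S)\) of (\ref{basic_materials}), whose sign ambiguity for \(\ell\equiv1\bmod 8\) must be shown to be harmless; you explicitly defer all of this as ``the main obstacle,'' so the table, which is the theorem, is not established. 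Two further points. First, the phase argument \(\widehat L(1,\widetilde\chi)^{2}=W(\widetilde\chi)\,|\widehat L(1,\widetilde\chi)|^{2}\) gives no information when \(L(1,\widetilde\chi)=0\), which for \(\ell\equiv1\bmod 8\) happens frequently (this is the point of the paper); the statement is then trivially of the tabulated form with \(a_\lambda=0\), but your write-up should say so rather than rely on the phase. Second, the divisibility \(\tilde\lambda^{\,3}\mid\mathrm{egs}(\lambda)\) does not follow from the archimedean estimates of \S\ref{estimate}, which you cite; the relevant tool is the \(\lambda\)-adic formal-group expansion, as in Lemma \ref{0214d} of this paper, which gives \(\mathrm{egs}(\lambda)\equiv\tfrac{\ell-1}{2}D_{d}\,\eta^{d}\bmod{\eta^{\ell}}\) and hence \(\ord(\mathrm{egs}(\lambda))\ge\tfrac34\); since \(\tilde\lambda\) is a unit away from \(\lambda\), that is the only place where integrality needs to be checked.
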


This theorem is also proved by using the formula of Cassels-Matthew and 
the functional equation of \ \(L(s,\widetilde{\chi})\). 
In \cite{A}, it is observed by Asai that \,\(a_{\lambda}\)\, is in \,\(2\mathbf{Z}\), 
but any proof of this is not known yet. 
\par
Searching the elliptic curve whose conductor is  \,\(\big(\,(1+\i)^3\lambda\,\big)^2\), 
which is square of that of \(\widetilde{\chi}\)\, (\cite{ST}, Theorem 12), 
we see the Hecke \(L\)-series associated to \,\(\mathrm{egs}(\lambda)\)\, 
is a factor of the \(L\)-series of the elliptic curve
\begin{equation}\label{ell_curve_1mod8}
\mathscr{E}_{\lambda}\,:\,  y^2=x^3-\lambda x,  
 \ \ \ (\,\lambda\overline{\lambda}=\ell\equiv 1 \bmod{8}\,).
\end{equation}
We have the same equation as (\ref{Deuring}) for this case as well.
The reduction type at \,\((1+\i)\)\, is of type \,\(\mathrm{III}\), 
and one at \,\(\lambda\)\, is of type \,\({\mathrm{I}_2}^*\). 
Each Tamagawa number \,\(\tau_{\mathfrak{p}}\)\, 
and the coefficients \,\(A_{\lambda}\)\, of \,\(\mathrm{egs}(\lambda)\)\, 
is given as follows\,:
\begin{equation}\label{table1}
\arraycolsep=7pt
\def\arraystretch{1.1}
\begin{array}{c|c|cccc}
\multicolumn{2}{c|}{\chi_{{}_\lambda}(1+\i)}    & \strutd        1                   &                  -1            &             i                      &            -i                  \\\hline
                     & A_{\lambda}             & \i\!\sqrt{2}\,{\cdot}\,a_{\lambda} & \sqrt{2}\,{\cdot}\,a_{\lambda} & \zeta_8\,{\cdot}\,a_{\lambda}      & \i\zeta_8\,{\cdot}\,a_{\lambda} \\[-2pt]
\ell\equiv1\bmod{16} & \tau_{(\lambda)}        &                2                   &                   2            &             2                      &             2                   \\[-3pt]
                     & \tau_{(1+\i)}\strutd &                4                   &                   4            &             2                      &             2                      \\\hline
                     & A_{\lambda}             & \i\zeta_8\,{\cdot}\,a_{\lambda}    & \zeta_8\,{\cdot}\,a_{\lambda}  & \i\!\sqrt{2}\,{\cdot}\,a_{\lambda} &  \sqrt{2}\,{\cdot}\,a_{\lambda} \\[-2pt]         
\ell\equiv9\bmod{16} & \tau_{(\lambda)}        &                2                   &                   2            &             2                      &             2                   \\[-3pt]
                     &    \tau_{(1+\i)}        &                2                   &                   2            &             4                      &             4                   
\end{array}
\end{equation}
\begin{remark}{\rm
Assuming the full statement of BSD conjecture true, 
we have \,\({\left(\,\frac12\,a_{\lambda}\,\right)}^2=\cardinarity\,\Sha(\mathscr{E}_{\lambda})\)\,
if \,\(a_{\lambda}\neq0\). 
}
\end{remark}
\vskip 5pt
\noindent
Recall the numbers \,\(D_n\)\, defined in (\ref{coeff_D}). 
Since \,\((\frac{3}{4}(\ell-1))!\,D_{\frac{3}{4}(\ell-1)}\)\, is in \,\(\mathbf{Z}\), 
\(-\frac{1}{2}D_{\frac{3}{4}(\ell-1)}\)\, is in \,\(\mathbf{Z}_{\ell}\). 
We keep in mind that \,\(\mathbf{Z}[\zeta_8]\)\, is an Euclidean ring. 
Using the method of \cite{O} and Lemma \ref{less_than_l_0}, the following is shown. 
\begin{theorem}
\label{cong_for_1_mod_8}
Let \,\(\widetilde{\lambda}_0\)\, be a prime lying above \,\(\lambda\)\, in \,\(\mathbf{Q}(\zeta_8)\). 
We have 
\begin{equation*}
A_{\lambda}\equiv -\frac{1}{2}\,D_{\frac{3}{4}(\ell-1)}\bmod{\widetilde{\lambda}_0},
\end{equation*}
where \,\(A_{\lambda}\)\, is given by the table \,{\rm(\ref{table1})}. 
Furthermore, \(A_{\lambda}\)\, is the minimal residue in \,\(\zeta_8\,\mathbf{Z}[\i]\)\, of the right hand side 
with respect to the absolute norm. 
\end{theorem}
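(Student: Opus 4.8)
The plan is to mimic the argument that proved Theorem~\ref{O_congr13} for the case $\ell\equiv 13\bmod 16$, now with the lemniscate cosine in place of the lemniscate sine. The starting point is the factorization $\mathrm{egs}(\lambda)=A_{\lambda}\,\tilde\lambda^{\,3}$ of Theorem~\ref{egs_A_lambda} together with $\tilde\lambda^{\,4}=-\lambda$ and $(\varLambda)^{\ell-1}=(\lambda)$ from \eqref{varLambda_lambda}; thus $\mathrm{egs}(\lambda)$ has $\lambda$-adic order $3/(\ell-1)$ times that of the prime, and dividing by $\tilde\lambda^{\,3}$ lands us in $\mathscr{O}_{\lambda}$. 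First I would rewrite the defining sum $\mathrm{egs}(\lambda)=\sum_{\nu\in S\cup\mathbf{i}S}\chi_{{}_\lambda}(\nu)\,\psi(\nu/\lambda)$ by using the power series $\mathrm{cl}(u)=\sum_n D_n u^n$ and the identity $\psi(u)=\mathrm{cl}((1-\mathbf{i})\varpi u)$, so that $\psi(\nu/\lambda)=\sum_n D_n\bigl((1-\mathbf{i})\varpi\bigr)^n\lambda^{-n}\nu^n$. The key arithmetic input is that, under the identification \eqref{Z_ell_Zi_lambda}, the quantity $(1-\mathbf{i})\varpi$ is a $\lambda$-adic unit times a power of $\varLambda$ (this is exactly the kind of period computation recorded in \cite{A}, p.~106, and in \cite{O}); tracking its $\lambda$-adic valuation is what pins down which single coefficient $D_n$ survives modulo the next power of $\lambda$.

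Next I would perform the congruence reduction. After multiplying through by $\lambda^{\,3/(\ell-1)\cdot(\ell-1)}$-type factors, i.e. by clearing $\tilde\lambda^{\,3}$, the character sum $\sum_{\nu}\chi_{{}_\lambda}(\nu)\,\nu^n$ over $S\cup\mathbf{i}S$ is the decisive object. Since $\chi_{{}_\lambda}(\nu)=(\nu/\lambda)_4\equiv\nu^{(\ell-1)/4}\bmod\lambda$, this sum becomes $\sum_{\nu\in S\cup\mathbf{i}S}\nu^{\,n+(\ell-1)/4}\bmod\lambda$, and by the standard argument for power sums over a set of representatives of $(\mathbf{Z}[\mathbf{i}]/\lambda)^{\times}/\langle\pm1\rangle$ (or $/\langle\pm1,\pm\mathbf{i}\rangle$ depending on the $\chi_{{}_\lambda}(\mathbf{i})$ case), this vanishes mod $\lambda$ unless the exponent is $\equiv 0\bmod(\ell-1)$, in which case it contributes a controlled constant. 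The exponent constraint $n+\tfrac{\ell-1}{4}\equiv 0$ combined with the valuation bookkeeping from the previous paragraph singles out $n=\tfrac34(\ell-1)$, and the surviving term is $D_{\frac34(\ell-1)}$ times an explicit $\lambda$-adic unit which, after matching it against the Tamagawa/root-number normalization encoded in table~\eqref{table1} and in \eqref{Hecke_L_egs}, collapses to the stated factor $-\tfrac12$. Here one must be careful that the several $\zeta_8$-twists appearing in the four columns of \eqref{table1} are precisely absorbed into the definition of $\widetilde{\lambda}_0$ as a prime of $\mathbf{Q}(\zeta_8)$ above $\lambda$, so that the congruence is stated in the right ring.

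For the final sentence — that $A_{\lambda}$ is literally the minimal residue in $\zeta_8\mathbf{Z}[\mathbf{i}]$ of $-\tfrac12 D_{\frac34(\ell-1)}$ with respect to the absolute norm — I would invoke Lemma~\ref{less_than_l_0}, which gives $|A_{\lambda}|<\tfrac12\ell$. From table~\eqref{table1}, $A_{\lambda}$ lies in $\zeta_8\mathbf{Z}[\mathbf{i}]$ (equivalently $A_{\lambda}/\zeta_8^{\,k}\in\sqrt{2}\,\mathbf{Z}$ or $\mathbf{Z}[\mathbf{i}]$ for the appropriate $k$), and its absolute norm is therefore less than $\ell^2/4<(\mathrm{N}\widetilde{\lambda}_0)\cdot\text{(small)}$; since $\mathbf{Z}[\zeta_8]$ is Euclidean, the residue class mod $\widetilde{\lambda}_0$ inside the lattice $\zeta_8\mathbf{Z}[\mathbf{i}]$ has a unique representative of norm below the relevant bound, and $A_{\lambda}$ is it. The main obstacle I anticipate is the second paragraph: correctly computing the $\lambda$-adic valuation of $(1-\mathbf{i})\varpi$ and of the period-related constants, and then checking that the resulting unit, once the root number and Tamagawa factors from \eqref{Hecke_L_egs} and \eqref{table1} are folded in, is exactly $-\tfrac12$ in every one of the eight sub-cases (four columns, two rows) — a bookkeeping task where an error in a power of $\mathbf{i}$ or $\zeta_8$ would silently break the statement. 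The character-sum vanishing and the final minimality argument are, by contrast, routine.
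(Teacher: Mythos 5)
There is a genuine gap at the heart of your second step. You expand $\psi(\nu/\lambda)=\sum_n D_n\bigl((1-\i)\varpi\bigr)^n\lambda^{-n}\nu^n$ and then propose to do $\lambda$-adic bookkeeping by claiming that $(1-\i)\varpi$ is ``a $\lambda$-adic unit times a power of $\varLambda$''. But $\varpi$ is a transcendental real period; it has no $\lambda$-adic valuation at all, and the complex Taylor series of $\mathrm{cl}$ evaluated at $(1-\i)\varpi\nu/\lambda$ cannot be reduced termwise modulo $\lambda$. The facts recorded in (\ref{varLambda_lambda}) concern the algebraic division values $\varLambda$, $\tilde\lambda$, not the period itself. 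This is exactly the point where the paper's proof substitutes the formal group for the complex uniformization: one takes the Lubin--Tate group $\LT$ with $[\lambda]_{\LT}(x)=\lambda x+x^{\ell}$, the strong isomorphism $\iota$ onto the formal group of $\mathrm{sl}$, a $\lambda$-division point $\eta$ with $\varLambda=\iota(\eta)$ and $\eta^{\ell-1}=-\lambda$, and shows (Lemma \ref{0213a2}) that $\mathrm{egs}(\lambda)=(\mathrm{Cl}\circ f_0)(\eta)$ where $\mathrm{Cl}(u)=\tfrac12\sum_j\zeta^{-dj}\mathrm{cl}(\zeta^j u)$ and, crucially, $\mathrm{Cl}\circ f_0\in\Z_{\ell}[[x]]$. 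It is this integrality, together with $f_0(x)\equiv x\ \mathrm{mod\ deg}\ \ell$, that legitimizes the congruence $\mathrm{egs}(\lambda)\equiv\tfrac{\ell-1}{2}D_{d}\,\eta^{d}\bmod{\eta^{\ell}}$ (Lemma \ref{0214d}); dividing by $\tilde\lambda^3\equiv\eta^{3(\ell-1)/4}$ then gives the theorem. Your character-sum sifting ($\chi_{{}_\lambda}(\nu)\equiv\nu^{(\ell-1)/4}$, survival only of exponents $\equiv0\bmod(\ell-1)$) is the right combinatorial idea, but in the paper it is implemented as the twisted average defining $\mathrm{Cl}$, inside the $\ell$-adically meaningful ring $\Z_{\ell}[[x]]$ — without that reinterpretation your valuation argument has nothing to stand on.

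A second, smaller misdirection: the constant $-\tfrac12$ does not come from ``matching the Tamagawa/root-number normalization'' in (\ref{table1}) and (\ref{Hecke_L_egs}); no root numbers or Tamagawa factors enter the congruence at all. It is simply $\tfrac{\ell-1}{2}\equiv-\tfrac12\bmod{\ell}$, the averaging constant in the definition of $\mathrm{Cl}$, so there is no eight-case check of units to perform. The table only fixes the shape $A_{\lambda}\in\zeta_8\,\Z[\i]$, which is what the final minimality clause needs. That last part of your proposal — combining Lemma \ref{less_than_l_0} with the Euclidean structure of $\Z[\zeta_8]$ to identify $A_{\lambda}$ as the norm-minimal representative of its class modulo $\widetilde{\lambda}_0$ — is in line with the paper.
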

\vskip 6pt
\section{An analogue of the congruent numbers\label{Gaussian_cong_numb}}\ 
\vskip 5pt
\noindent
The following is well-known (see, for example, Koblitz' book \cite{K}). 

\begin{proposition}
\label{Koblitz}
Let \,\(n\)\, be a rational integer. 
For the elliptic curve {\rm\,\(\mathscr{E}_{n^2}\)\,: \(y^2=x^3-n^2x\)}, 
the following three are equivalent each other\,{\rm:}
\begin{oitem}
\item There exist \(u\), \(v\) in \,\(\mathbf{Q}\)\, such that \,\(n^2=u^4-v^2\)\,{\rm;}
\item \(n\)\, is a congruent number\,{\rm;}
\item \(\mathrm{rank}\ \mathscr{E}_{n^2}(\mathbf{Q})>0\).
\end{oitem}
\end{proposition}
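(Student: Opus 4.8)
\emph{Sketch of proof.}\
The plan is to route all three statements through the single intermediate assertion
\[
(\ast)\qquad \mathscr{E}_{n^2}(\mathbf{Q})\ \text{has a rational point}\ (x,y)\ \text{with}\ y\neq0,
\]
and then to translate $(\ast)$ into each of (1), (2), (3) by explicit algebra. First I would dispose of torsion. Since $-(-n^2)=n^2$ is a rational square, the points $(0,0)$, $(n,0)$, $(-n,0)$ together with $O$ already form a copy of $\mathbf{Z}/2\times\mathbf{Z}/2$ inside $\mathscr{E}_{n^2}(\mathbf{Q})$; and by the Lutz--Nagell theorem (or by reducing modulo two primes of good reduction, or by Mazur's theorem) there is no further rational torsion, so $\mathscr{E}_{n^2}(\mathbf{Q})_{\mathrm{tors}}\simeq\mathbf{Z}/2\times\mathbf{Z}/2$. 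As every torsion point has $y=0$, a rational point satisfies $y\neq0$ precisely when it is non-torsion, and this yields $(\ast)\Longleftrightarrow(3)$ immediately.

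Next I would establish $(\ast)\Longleftrightarrow(2)$ via the classical dictionary between rational right triangles of area $n$ and rational points. From such a triangle with legs $a,b$ and hypotenuse $c$, the identities $(a\pm b)^2=c^2\pm 4n$ show that $\tfrac{c^2}{4}$ and $\tfrac{c^2}{4}\pm n$ are all rational squares, whence
\[
\left(\tfrac{c^2}{4},\ \tfrac{(a^2-b^2)\,c}{8}\right)\in\mathscr{E}_{n^2}(\mathbf{Q}),
\]
and its $y$-coordinate is nonzero, since its vanishing would force $a=b$ and then $c=a\sqrt2\notin\mathbf{Q}$. Conversely, if $(x,y)\in\mathscr{E}_{n^2}(\mathbf{Q})$ has $y\neq0$, then $x\neq0,\pm n$, and
\[
\frac{x^2-n^2}{y},\qquad \frac{2nx}{y},\qquad \frac{x^2+n^2}{y}
\]
are, after an obvious normalization of signs, the legs and hypotenuse of a right triangle whose area equals $\tfrac{nx(x^2-n^2)}{y^2}=n$. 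Both directions are one-line verifications, so this step is pure bookkeeping.

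Finally, for $(\ast)\Longleftrightarrow(1)$: from a nontrivial solution of $n^2=u^4-v^2$---necessarily with $u\neq0$, since $n\neq0$---the pair $(u^2,uv)$ is a rational point of $\mathscr{E}_{n^2}$ with $y=uv\neq0$. In the other direction a point with $y\neq0$ need not have square $x$-coordinate, but by $(\ast)\Leftrightarrow(3)$ it produces a point $P$ of infinite order, and the duplication formula
\[
x(2P)=\left(\frac{x(P)^2+n^2}{2\,y(P)}\right)^{\!2}
\]
exhibits a nonzero rational square $u^2$ as the $x$-coordinate of the still non-torsion point $2P$; writing $y(2P)=uv$ then gives $u^4-n^2=v^2$ with $v\neq0$, which is (1). (The degenerate solutions of $n^2=u^4-v^2$ having $v=0$, possible only when $n$ is a rational square, are discarded here; reducing to $n$ squarefree at the outset is harmless.) Chaining these three equivalences closes the circle $(1)\Leftrightarrow(2)\Leftrightarrow(3)$. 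The only step that is not mere substitution is the torsion computation---pinning down that $\mathscr{E}_{n^2}$ has no rational torsion beyond its $2$-torsion---which is therefore where I expect the genuine input (in practice, an appeal to a standard theorem) to be needed.
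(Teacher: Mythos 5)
Your sketch is correct in substance, but it is a genuinely different route from the paper's, because the paper in fact gives no proof of Proposition \ref{Koblitz}: it only remarks that (1) is a paraphrase of the definition of a congruent number and refers to Proposition 18 of Koblitz \cite{K} for the equivalence of (2) and (3). You instead build a self-contained argument pivoted on the existence of a rational point with $y\neq0$: the torsion computation $\mathscr{E}_{n^2}(\mathbf{Q})_{\mathrm{tors}}\simeq\mathbf{Z}/2\times\mathbf{Z}/2$ (this is where the genuine outside input sits; in \cite{K} it is obtained by reduction modulo primes $p\equiv3\bmod4$ plus Dirichlet rather than Lutz--Nagell or Mazur, but any of these suffices), the classical triangle--point dictionary for (2), and the duplication formula to manufacture a square $x$-coordinate for (1). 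That last device is precisely the one the paper itself deploys later in the Gaussian analogue, Proposition \ref{GCN} (the step $(1)\Rightarrow(3)$ there, with $u=(a^2+\lambda)/(2b)$), so your proof is in effect the $\mathbf{Q}$-prototype of the paper's own argument over $\mathbf{Q}(\i)$. What your route buys is twofold: it makes the proposition independent of the citation, and it exposes that (1) is \emph{not} a mere paraphrase of the congruent-number condition --- recovering three rational squares in arithmetic progression from $n^2=u^4-v^2$ really does use the duplication/2-descent step. You also correctly flag the defect in the literal statement: if $n$ is a perfect square (already $n=1$, $u=1$, $v=0$) then (1) holds while (2) and (3) fail, so one must exclude the degenerate solutions with $v=0$ (equivalently, exclude $n$ a rational square); note that passing to squarefree $n$ alone does not dispose of $n=1$, so the honest fix is the convention $uv\neq0$ in (1). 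The paper glosses over this because it uses the proposition only as motivation; in the Gaussian case the issue disappears since $\lambda$ is prime, so $\lambda=u^4$ is impossible.
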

{\color{black}
The claim (1) is a sort of paraphrase of the definition of congruent number for \(n\). }
The equivalence of (2) and (3) is described as Proposition 18 in \cite{K}. 

\begin{lemma}\label{nagell} 
Let \,\(A\)\, be a square-free integer in \,\(\mathbf{Z}[\i]\)\, not dividing \(5\). 
Then there are only two torsion points \,\((0,0)\)\, and \,\(\infty\) in the group of 
\,\(\mathbf{Q}(\i)\)-rational points on the elliptic curve
\vskip -15pt
\begin{equation*}
\mathscr{E}_A\,:\,y^2=x^3-Ax.
\end{equation*}
\end{lemma}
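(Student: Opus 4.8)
The plan is to determine the torsion subgroup of $\mathscr{E}_A(\mathbf{Q}(\i))$ explicitly by combining a $2$-descent argument with a reduction argument. First I would note that the point $(0,0)$ is visibly a $2$-torsion point, so $\{\infty,(0,0)\}$ is always contained in the torsion subgroup; the content of the lemma is that nothing else appears. The full $2$-torsion of $\mathscr{E}_A$ consists of $\infty$ together with the points with $y=0$, i.e. $x^3-Ax=x(x^2-A)=0$; since $A$ is square-free in $\mathbf{Z}[\i]$ (and $A\neq\pm1$, as $A$ is square-free not dividing $5$ — in particular $A$ is not a unit once we also exclude the trivial degenerate cases), $A$ is not a square in $\mathbf{Q}(\i)$, so $x^2-A$ has no root in $\mathbf{Q}(\i)$. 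Hence $\mathscr{E}_A(\mathbf{Q}(\i))[2]=\{\infty,(0,0)\}$ has order $2$, and consequently the torsion subgroup has a cyclic $2$-part of order exactly $2$ (it cannot contain $\mathbf{Z}/4$ either, since a point of order $4$ would have to double to the unique $2$-torsion point $(0,0)$, and one checks by the duplication formula that $(0,0)$ is not twice a $\mathbf{Q}(\i)$-rational point precisely because $-A$ and $A\cdot(\text{something})$ fail to be squares — more cleanly, $x=0$ forces the halving equations to have no rational solution when $A$ is not a fourth power times a unit).

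Next I would rule out odd torsion by a reduction-mod-$\mathfrak{p}$ argument. Choose a prime $\mathfrak{p}$ of $\mathbf{Z}[\i]$ of good reduction for $\mathscr{E}_A$ — i.e. $\mathfrak{p}\nmid 2A$ — and recall the standard fact that the reduction map is injective on prime-to-residue-characteristic torsion. Comparing $\#\mathscr{E}_A(\mathbf{F}_{\mathfrak{p}})$ for two suitably chosen primes $\mathfrak{p}$ (for instance two split primes above rational primes $p\equiv1\bmod 4$, lying over distinct $p$, chosen coprime to $2A$) pins the order of the prime-to-$p$ torsion, and taking $\gcd$ over a second choice of prime forces the torsion order to be a power of $2$. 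Here one can be economical: the curve $\mathscr{E}_A$ has $j$-invariant $1728$ and complex multiplication by $\mathbf{Z}[\i]$, so over $\mathbf{F}_{\mathfrak{p}}$ with $\mathfrak{p}=(\pi)$, $\pi\equiv1\bmod(1+\i)^3$, one has $\#\mathscr{E}_A(\mathbf{F}_{\mathfrak{p}})=N\mathfrak{p}+1-a_{\mathfrak{p}}$ with $a_{\mathfrak{p}}$ a Jacobi-sum-type expression depending on the quartic residue symbol of $A$ mod $\mathfrak{p}$; choosing $\mathfrak{p}$ so that $a_{\mathfrak{p}}$ is coprime to a prescribed small prime $q$ shows $q\nmid\#\mathscr{E}_A(\mathbf{Q}(\i))_{\mathrm{tors}}$.

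Finally I would assemble these: the torsion subgroup has order a power of $2$, its $2$-torsion has order $2$, and there is no point of order $4$; therefore the torsion subgroup is exactly $\mathbf{Z}/2\mathbf{Z}=\{\infty,(0,0)\}$. The hypothesis that $A$ be square-free enters in exactly two places — ensuring $A$ is not a square in $\mathbf{Q}(\i)$ (so $\#\mathscr{E}_A[2](\mathbf{Q}(\i))=2$) and ensuring a clean minimal model / good-reduction analysis away from $2A$ — and the hypothesis $A\nmid 5$ is precisely what excludes the sporadic curve $y^2=x^3-5x$-type exceptions where extra torsion (or a $4$-torsion point) can occur over $\mathbf{Q}(\i)$; I expect verifying the $A\mid 5$ exclusion is sharp, i.e. locating exactly which divisors of $5$ must be thrown out and why, to be the main obstacle, and the cleanest route is to run the halving-equation computation for $x=0$ directly and observe it is solvable over $\mathbf{Q}(\i)$ only when $A$ (up to units and squares) is one of the divisors of $5$. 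The rest is the routine reduction bookkeeping sketched above.
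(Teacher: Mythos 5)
Your treatment of the $2$-power part is fine and agrees with the paper: square-freeness makes $x^2-A$ irreducible over $\mathbf{Q}(\i)$, so the rational $2$-torsion is $\{\infty,(0,0)\}$, and a point of order $4$ would satisfy $[2]P=(0,0)$, forcing $a^2=-A$ by the duplication formula, which square-freeness alone already forbids (note $-1$ is a square in $\mathbf{Z}[\i]$, so $-A$ a square would make $A$ a square). But this observation also shows that you have misplaced the role of the hypothesis $A\nmid 5$: it has nothing to do with $4$-torsion or with the halving equation at $x=0$, which is excluded for \emph{every} square-free non-unit $A$. The genuine exceptions $A=\pm(-1\pm2\i)$ (the associates of the primes above $5$) carry a rational point of order $5$ — the paper's remark records that their Mordell–Weil groups have torsion of order $10$ — so the computation you propose as "the main obstacle" would never detect why these $A$ must be excluded.

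This matters because the odd-torsion step is where all the content of the lemma lies, and your reduction-mod-$\mathfrak{p}$ plan is not actually carried out there. To conclude, you must produce, for \emph{every} odd prime $q$ and \emph{every} square-free $A\nmid 5$, a prime $\mathfrak{p}\nmid 2A$ with $q\nmid\#\widetilde{\mathscr{E}}_A(\mathbf{F}_{\mathfrak{p}})$; this is a uniform statement in $A$, not a finite gcd computation, and controlling the CM point counts $N\mathfrak{p}+1-(\chi_4(A)$-twisted trace$)$ modulo $q$ uniformly in $A$ requires quartic reciprocity/Chebotarev input that you have not supplied. Moreover, nothing in your sketch shows where $A\nmid 5$ would enter this step — yet it must, since for $A\mid 5$ there is a rational $5$-torsion point and every good-reduction count is divisible by $5$, so the argument as sketched cannot be correct as stated. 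The paper avoids all of this by an elementary Nagell--Lutz-style route: torsion points have coordinates in $\mathbf{Z}[\i]$, an odd-order point lies in $[1+\i]\mathscr{E}_A(\mathbf{Q}(\i))$ because the quotient by $[1+\i]$ has exponent $2$, hence has square $x$-coordinate, and an explicit divisibility computation (using that $[1+\i]P$ and $[2]P$ also have odd order and integral coordinates) forces $A=\pm(-1\pm2\i)$ — which is precisely, and only, where $A\nmid 5$ is used. Either adopt such an integrality/descent argument for the odd part, or supply the missing uniform point-count argument; as it stands the proposal has a genuine gap at exactly the step the lemma is about.
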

\vskip 3pt
\begin{proof}
This proof is a slight modification of the argument in \cite{Nagell}. 
Since \,\(A\)\, is square-free, the equation \,\(x^3-Ax=0\)\, has only root \,\(x=0\)\, in \,\(\mathbf{Q}(\i)\). 
Thus the \,\(\mathbf{Q}(\i)\)-rational point of \,\(\mathscr{E}_A\)\, of order two is \,\((0,0)\). 
Let \,\((a,b)\)\, be a \,\(\mathbf{Q}(\i)\)-rational point of \,\(\mathscr{E}_A\).  
The \,\(x\)-coordinate of \,\([1+\i](a,b)\)\, is \,\(x_{1+\i}=\big(\frac{b}{(1-\i)a}\big)^2\)\, 
and the \,\(x\)-coordinate of \,\([2](a,b)\)\, is \,\(x_{2}=\big(\frac{a^2-A}{2b}\big)^2\). 
Therefore, \,\(x\)-coodinate of any point in either \,\([1+\i]\mathscr{E}_A(\mathbf{Q}(\i))\)\, 
or \,\([2]\mathscr{E}_A(\mathbf{Q}(\i))\)\, is square. 
Assume \,\((a,b)\)\, is of finite order. 
Then \,\(a\)\, and \,\(b\) belong to \,\(\mathbf{Z}[\i]\)\,
(see \cite{Nagell}, p.14, Theorem 2 or \cite{C}, \S 11 and \S 12).  
If \,\((a,b)\)\, satisfies \,\([2](a,b)=(0,0)\), we have \,\(a^2=-A\). 
It does not occur because \,\(A\)\, is square-free. 
Hence there does not exitst any \,\(\mathbf{Q}(\i)\)-rational point of order divided by \(4\). 
Assume that \,\((a,b)\)\, is \,\(\mathbf{Q}(\i)\)-rational points of odd order. 
Since \,\(\mathscr{E}_A(\mathbf{Q}(\i))/[1+\i]\mathscr{E}_A(\mathbf{Q}(\i))\)\, is an abelian group of exponent two, 
\((a,b)\)\, is in \,\([1+\i]\mathscr{E}_A(\mathbf{Q}(\i))\). 
Thus \,\(a\)\, is square in \,\(\mathbf{Z}[\i]\). 
Since \,\([1+\i](a,b)\)\, is of odd order and \,\(x_{1+\i}\)\, is in \,\(\mathbf{Z}[\i]\), 
we have \,\(a\,|\,b\)\, and \,\(1+\i\,|\,b\).
As \,\(a\)\, is square and \,\(b^2=a(a^2-A)\), 
we have \,\(a=f^2\), \(b=f^2g\), \(a^2-A=f^2g^2\)\, for some \,\(f\), \(g\)\, in \,\(\mathbf{Z}[\i]\). 
Since \,\(-A=f^2(g^2-f^2)\)\, 
and \,\(A\)\, is square-free, \(f^2\)\, is unit. 
Thus we have \,\(f^2=\pm 1\).
Furthermore, \,\([2](a,b)\)\, is of odd order and \,\(x_{2}\)\, is in \,\(\mathbf{Z}[\i]\), 
we have \,\(2b\,|\,a^2+A\). 
Since \,\(f^2\)\, is unit, we have \,\(2g\,|\,2f^2-g^2\). 
Thus we have \,\(1+\i\,|\,g\)\, and \,\(\frac{g}{1+\i}\,\big|\,f^2\).
Since \,\(f^2\)\, is an unit, \,\(g\)\, is equal to \,\(1+\i\)\, up to unit. 
Therefore we have \,\(A=\pm (-1\pm 2i)\). 
They does not occur because \,\(A\)\, does not divide \(5\). 
Hence there does not exitsts any \,\(\mathbf{Q}(\i)\)-rational point of odd order. 
This completes the proof. 
\end{proof} 

\begin{remark}{\rm
In the last four cases of \,\(A\)\, 
in the proof above, we see the groups of \(\mathbf{Q}(\i)\)-rational points of the curves are of rank \(0\) 
because the \(L\)-functions do not vanish at \(1\) (see the proof of Lemma 2.11 (b) p.105, \cite{A}). 
So that they are finite groups due to \cite{CW}. \texttt{MAGMA} says that the groups are of order \(10\) generated by 
\(\pm(1\mp 2\i,\,-1\mp 3\i)\).
}
\end{remark}

We prove the following analogue of Proposition \ref{Koblitz}. 

\begin{proposition}\label{GCN}
Let \(\lambda\) be any Gaussian prime of degree \(1\) satisfying 
\,\(\lambda\equiv 1\bmod{(1+\i)^3}\)\, and assume \,\(\lambda{\not|}\ 5\). 
The following three statements are equivalent\,{\rm:}
\begin{oitem}
\item 
There are infinitely many \,\(\mathbf{Q}(\i)\)-rational points on \,\(\mathscr{E}_{\lambda}\),
namely, 
\begin{equation*}
\mathrm{rank}\,\mathscr{E}_{{\lambda}}\left(\mathbf{Q}(\i)\right)>0;
\end{equation*}
\item 
The prime \,\({\lambda}\)\, is of the form \,\(-\alpha^4+\beta^2\i\)\, with 
\,\(\alpha\), \(\beta\in\mathbf{Q}(\i)\); 
\item 
The prime \,\({\lambda}\)\, is of the form \,\(u^4-v^2\)\, with 
\,\(u\), \(v\in\mathbf{Q}(\i)\). 
\end{oitem}
\end{proposition}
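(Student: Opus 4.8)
The plan is to argue throughout on $\mathscr{E}_{\lambda}\colon y^2=x^3-\lambda x$, using that by Lemma \ref{nagell} its $\mathbf{Q}(\i)$-torsion subgroup is exactly $\{O,(0,0)\}$ (this is the only place the hypothesis $\lambda\nmid 5$ is used). Two of the implications into (1) are then immediate. If $\lambda=u^4-v^2$ with $u,v\in\mathbf{Q}(\i)$ then $u\ne0$ (a square-free prime is not of the form $-v^2$), the point $(u^2,uv)$ lies on $\mathscr{E}_{\lambda}$ with non-zero $x$-coordinate, hence by Lemma \ref{nagell} it has infinite order, so (1) holds. If $\lambda=-\alpha^4+\beta^2\i$ then $\alpha\ne0$, and the identity $\alpha^4+\lambda=\beta^2\i$ gives $(\i\alpha^2)^3-\lambda(\i\alpha^2)=-\i\alpha^2(\alpha^4+\lambda)=(\alpha\beta)^2$, so $(\i\alpha^2,\alpha\beta)$ is a point with non-zero $x$-coordinate, again of infinite order. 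Thus (2)$\Rightarrow$(1) and (3)$\Rightarrow$(1).

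For (1)$\Rightarrow$(3) I would use duplication: for $P=(a,b)\in\mathscr{E}_{\lambda}$ with $b\ne0$ the point $[2]P$ has $x$-coordinate a perfect square, namely $\bigl((a^2+\lambda)/(2b)\bigr)^2$. Starting from a point $P$ of infinite order, $a^2\ne-\lambda$ (square-freeness), so $w:=(a^2+\lambda)/(2b)\ne0$ and $[2]P=(w^2,y_2)$; then $y_2^2=w^6-\lambda w^2$ rearranges to $\lambda=w^4-(y_2/w)^2$, which is (3).

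The substantive implication is (1)$\Rightarrow$(2): positive rank must be shown to force a $\mathbf{Q}(\i)$-point whose $x$-coordinate lies in the square class of $\i$. For this I would run the $2$-isogeny descent attached to $[1+\i]\colon\mathscr{E}_{\lambda}\to\mathscr{E}_{\lambda}$ (whose kernel is $\{O,(0,0)\}$); since $[1+\i]$ and its dual $[1-\i]$ differ by an automorphism (because $1-\i=-\i(1+\i)$, equivalently $\mathscr{E}_{\lambda}/\langle(0,0)\rangle\cong_{\mathbf{Q}(\i)}\mathscr{E}_{\lambda}$, as the relevant quartic-twist factor is $-4=(1+\i)^4$), the descent yields $\mathrm{rank}\,\mathscr{E}_{\lambda}(\mathbf{Q}(\i))=2\log_2|G|-2$, where $G$ is the subgroup of $\mathbf{Q}(\i)^{\times}/(\mathbf{Q}(\i)^{\times})^2$ consisting of the square classes of $x$-coordinates of $\mathbf{Q}(\i)$-points (with $(0,0)\mapsto-\lambda$). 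Now $G$ sits inside the order-$8$ subgroup generated by the classes of $1+\i$, $\lambda$ and $\i$ (note $-1$ is a square in $\mathbf{Q}(\i)$, so units contribute only $\i$); the key step is a local parity count at $(1+\i)$: for any $x=(1+\i)t^2$, clearing denominators in $y^2=x^3-\lambda x$ forces $v_{(1+\i)}$ of a square in $\mathbf{Z}[\i]$ to be odd (using only $(1+\i)\nmid\lambda$), and likewise for the three other classes divisible by $(1+\i)$, so $G\subseteq\{1,\lambda,\i,\i\lambda\}$. Hence $\mathrm{rank}>0$ forces $G=\{1,\lambda,\i,\i\lambda\}$, so some $\mathbf{Q}(\i)$-point has $x$-coordinate $\i\alpha^2$ (adding $(0,0)$ first if its class is $\i\lambda$), and $y^2=-\i\alpha^2(\alpha^4+\lambda)$ rearranges to $\lambda=-\alpha^4+\i(y/\alpha)^2$, which is (2). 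With (1)$\Leftrightarrow$(2) and (1)$\Leftrightarrow$(3) in hand, all three are equivalent.

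The main obstacle is this last paragraph: organizing the $2$-isogeny descent cleanly --- in particular the coincidence of the two descent images (equivalently the $\mathbf{Q}(\i)$-isomorphism $\mathscr{E}_{\lambda}/\langle(0,0)\rangle\cong\mathscr{E}_{\lambda}$) and the resulting rank formula --- and executing the local parity computation at $(1+\i)$ that kills the square classes divisible by $(1+\i)$. That local step is where the arithmetic of $\mathbf{Q}(\i)$ above $2$ really enters, and it is exactly what upgrades ``positive rank'' from conclusion (3) to the more restrictive shape (2).
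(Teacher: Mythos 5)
Your proposal is correct and follows essentially the same route as the paper: explicit points for (2),(3)$\Rightarrow$(1) (infinite order coming from Lemma \ref{nagell}), the duplication formula for (1)$\Rightarrow$(3), and, for (1)$\Rightarrow$(2), a descent by the degree-two endomorphism $[1+\i]$ showing that positive rank forces some rational point (after possibly translating by $(0,0)$) to have $x$-coordinate in the square class of $\i$. The only difference is packaging: where you cite the standard two-isogeny rank formula (together with $\mathscr{E}_{\lambda}/\langle(0,0)\rangle\cong\mathscr{E}_{\lambda}$) and a separate parity argument at $(1+\i)$, the paper carries out the same descent by hand, sending $\mathscr{E}_{\lambda}(\mathbf{Q}(\i))$ to the order-four subgroup of $\mathbf{Q}(\i)^{\times}/(\mathbf{Q}(\i)^{\times})^{2}$ generated by $\i$ and $\lambda$, computing that the kernel is $[1+\i]\mathscr{E}_{\lambda}(\mathbf{Q}(\i))$, and concluding surjectivity under the positive-rank hypothesis.
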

\begin{proof}
(2)\(\Rightarrow\)(1). 
For the given expression \,\(\lambda=-\alpha^4+\beta^2\i\), 
we see \,\((\alpha^2\i,\,\alpha\beta)\)\, 
is a \(\mathbf{Q}(\i)\)-rational point of infinite order on the curve \,\(\mathscr{E}_{\lambda}\)\, 
because of Lemma \ref{nagell} and 
\begin{equation*}
(-\alpha^2\i)^3-\lambda(\alpha\i)^2
=(-\alpha^2\i)^3-(-\alpha^4+\beta^2\i)(\alpha\i)^2
=(\alpha\beta)^2.
\end{equation*}
(3)\(\Rightarrow\)(1). 
This is proved similarly. 
Indeed, if \,\(\lambda=u^4-v^2\), 
then \((x,y)=(u^2,\,uv)\)\, is a point of infinite order on \,\(\mathscr{E}_{\lambda}(\mathbf{Q}(\i))\) 
because of \,\(x^3-{\lambda}\,x=u^6-(u^4-v^2)\,u^2=(uv)^2 = y^2\). 
(1)\(\Rightarrow\)(3). 
Lemma \ref{nagell} implies that  
the set of torsion points of \,\(\mathscr{E}_{\lambda}(\mathbf{Q}(\i))\)\, 
is \,\(\{(0,0),\infty\}\). 
So we assume there exists a non-torsion point \,\((a,b)\), 
namely \,\(b^2=a^3-\lambda a\), 
with \,\(a\), \(b\)\, in \,\(\mathbf{Q}(\i)\). 
The duplication \,\([2](a,b)\)\, is given by
\begin{equation*}
\Big(\,\frac{(a^2+\lambda)^2}{4b^2}, \ 
\frac{a^6-5\lambda a^4-5\lambda^2a^2+\lambda^3}{8b^3}\,\Big). 
\end{equation*}
We define
\begin{equation*}
\smash{
u=\frac{a^2+\lambda}{2b}\ (\neq 0), \ 
v=\frac{a^4-6\lambda a^2+\lambda^2}{4b^2}. }
\end{equation*}
Then the point \,\((u^2,\,uv)\)\, is on the curve, 
and \,\(\lambda=u^4-v^2\).
\\
(1)\(\Rightarrow\)(2). 
This proof is given by \(2\)-descent, 
which is a modification of the proof of Proposition 1.4 in  Chapter X, \cite{Si}.  
We put 
\begin{equation*}
T_{\lambda}
=\{\,b\in \mathbf{Q}(\i)^{\times}/(\mathbf{Q}(\i)^{\times})^2
\ \big|\ 
\mathrm{ord}_{\pi}(b)\equiv 0 \bmod{2}\ \ \mbox{for all prime}\ \,\pi{\not\vert}\,\lambda\,\}. 
\end{equation*}
This is a subgroup of \,\(\mathbf{Q}(\i)^{\times}/(\mathbf{Q}(\i)^{\times})^2\)\, 
of order four generated by \,\(i\)\, and \,\(\lambda\). 
There is a homomorphism 
\vskip -20pt
\begin{equation}\label{a}
\mathscr{E}_{\lambda}(\mathbf{Q}(\i))
\rightarrow 
T_{\lambda}\ \ \mbox{defined by}\ \ 
(x,y)\mapsto
\begin{cases}
\       x & \mbox{if}\ x\ne 0, \\[-3pt]
\ \lambda & \mbox{if}\ x=0,    \\[-3pt]
\       1 & \mbox{if}\ x=\infty.
\end{cases}
\end{equation}
Indeed, if we put \,\((x_3,y_3)=(x_1,y_1)+(x_2,y_2)\)\, 
which is an addition on \,\(\mathscr{E}_{\lambda}\), and 
\begin{equation*}
m=\Bigg\{\ 
\begin{aligned}
(y_1-y_2)/(x_1-x_2)      \ & \ \mbox{if} \ \ x_1\ne x_2,\\
(3x_1^2+{\lambda})/(2y_1)\ & \ \mbox{if} \ \ x_1=x_2, 
\end{aligned}
\end{equation*}
we have \,\(x_1 x_2 x_3=(-m x_1+y_1)^2\).
Hence, \(x_3\in x_1x_2\,\big(\mathbf{Q}(\i)^{\times}\big)^2\)\, 
if \,\(x_1x_2\neq0\). 
If \,\(x_2\ne0\)\, and \,\(x_1=0\), we have 
\begin{equation*}
x_3=\left(\frac{y_2}{x_2}\right)^2-x_2
=\frac{-{\lambda}x_2}{{x_2}^2}\in
{\lambda}x_2 \big(\mathbf{Q}(\i)^{\times}\big)^2
\end{equation*}
because of \,\(m^2=x_1+x_2+x_3\).
We show that the kernel of (\ref{a}) is \,\([1+\i]\mathscr{E}_{\lambda}(\mathbf{Q}(\i))\). 
Let \,\((x_1,y_1)\)\, be a point in \,\(\mathscr{E}_{\lambda}(\mathbf{Q}(\i))\), 
\((x_3,y_3)=[1+\i](x_1,y_1)\)\, and \((x_2,y_2)=[\i](x_1,y_1)=(-x_1,{\i}y_1)\). 
Then we see
\begin{equation*}
x_3=m^2-x_1-x_2=\left(\frac{y_1}{(1+\i)x_1}\right)^2\in (\mathbf{Q}(\i)^{\times})^2. 
\end{equation*}
Therefore \,\([1+\i]\mathscr{E}_{\lambda}(\mathbf{Q}(\i))\)\, is contained in the kernel. 
Conversely, suppose \,\((x_3,y_3)\)\, is in \,\(\mathscr{E}_{\lambda}(\mathbf{Q}(\i))\)\, 
and \,\(x_3\)\, is a square. 
Then, firstly, \,\(m=y_1/((1+\i)x_1)\)\, is in \,\(\mathbf{Q}(\i)\) because of 
the assumption on \,\(x_3\). 
Secondly, as
\begin{equation*}
y_3=-m(x_3-x_1)-y_1=-m^3-\frac{1+\i}{2}y_1
\end{equation*}
is in \,\(\mathbf{Q}(\i)\), \,\(y_1\)\, is in \,\(\mathbf{Q}(\i)\). 
Thirdly, as \,\(m\)\, is in \,\(\mathbf{Q}(\i)\), 
\(x_1\)\, is also in \,\(\mathbf{Q}(\i)\). 
Accordingly, the kernel is contained in \,\([1+\i]\mathscr{E}_{\lambda}(\mathbf{Q}(\i))\). 
Therefore the induced homomorphism 
\begin{equation*}
\mathscr{E}_{\lambda}(\mathbf{Q}(\i))/[1+\i]\mathscr{E}_{\lambda}(\mathbf{Q}(\i))
\longrightarrow T_{\lambda}
\end{equation*}
is bijective. 
Summing up, for a \,\(\mathbf{Q}(\i)\)-rational point \,\((x,y)\), 
either \,\(x\)\, or the first coordinate of \,\((x,y)+(0,0)\)\, 
is of the form \,\(\alpha^2\i\). 
We write the obtained point as
\,\((\alpha^2\i,\,\alpha\beta)\). 
Then \,\(\alpha^2\beta^2=-\alpha^6\i-\lambda\alpha^2\i\). 
This means \,\(\lambda=-\alpha^4+\beta^2\i\)\, 
and the proof has been completed. 
\end{proof}
\begin{remark}{\rm We shall give some remarks on Proposition \ref{GCN}. 
\begin{oitem}
\item A prime \,\(\lambda\)\, of the form in (2) or (3) of Proposition \ref{GCN} should be called a \textit{Gaussian congruent number}. 
\item In the examples in \,\cite{A} \,each of statements (1), (2), and (3) of Proposition \ref{GCN} is satisfied 
if and only if \,\(\mathrm{egs}(\lambda)=0\). 
\item In the examples of \,\cite{A}\, such that \,\(\mathrm{egs}({\lambda})=0\), 
except \,\({\lambda}\overline{{\lambda}}=4817\equiv 1\bmod{16}\), 
we can take \,\(\alpha\), \(\beta\)\, in \,\(\mathbf{Z}[\i]\). 
See Example \ref{4817} below. 
\item We summarize the situation as follows:
\begin{equation*}
\begin{aligned}
\mbox{\({\lambda}\) \ is of the form \(-\alpha^4\i+\beta^2\)}\ 
&\xLongleftrightarrow{\hskip 18pt\mbox{\tiny Prop.\,\ref{GCN}}\hskip 20pt } \ 
\mathrm{rank}\,\mathscr{E}_{{\lambda}}\left(\mathbf{Q}(\i)\right)>0 \\
&\ \textcolor{gray}{\xLongleftarrow{\textcolor{black}{\mbox{\tiny BSD}}}}
 \xLongrightarrow{\mbox{\tiny Coates-Wiles}} \ 
L\,(1,\tilde{\chi})=0  \\
&\xLongleftrightarrow{\hskip 26pt \mbox{\tiny Asai} \hskip 25pt } \ 
\mathrm{egs}(\lambda)=0. 
\end{aligned}
\end{equation*}
\item 
In the proof of \,\((1)\Rightarrow(2)\), 
we show that \,\(\mathscr{E}_{\lambda}(\mathbf{Q}(\i))/[1+\i]\mathscr{E}_{\lambda}(\mathbf{Q}(\i))\)\, 
is generated by \,\((0,0)\)\, and at most one non-torsion point. 
Thus the \,\(\mathbf{Z}[\i]\)-rank of \,\(\mathscr{E}_{\lambda}(\mathbf{Q}(\i))\)\, is at most one. 
If \,\(\mathrm{egs}(\lambda)=0\), 
then \,\(\mathscr{E}_{\lambda}(\mathbf{Q}(\i))\)\, has \,\(\mathbf{Z}[\i]\)-rank one, 
that is, MW-rank two. 
\end{oitem}
}
\end{remark}

\begin{example}
\label{4817}{\rm
Take  \,\(\lambda=41+56\,\i\), \,\(\ell={\lambda}\overline{{\lambda}}=4817\equiv 1\bmod{16}\). 
Then \,\(\lambda=\alpha^4-\beta^2\), where
\begin{equation*}
\alpha=\frac{-7\i(2+\i)(4+\i)}
            {3(1+2\i)(2+3\i)}, \ \ \ 
\beta=\frac{-(1+\i)^5(3+2\i)(7+8\i)(6+11\i)}
           {3^2(1+2\i)^2(2+3\i)^2}
\end{equation*}
and \,\(P=(\alpha^2,\,\alpha\beta)\)\, is a point of infinite order. 
This is given by a calculation by \texttt{MAGMA}. 
It also says that the Mordell-Weil rank of \,\(\mathscr{E}_{{\lambda}}\)\, is \,\(2\). 
We know another ratinal point by \texttt{MAGMA} as follows. Let 
\begin{equation*}
\alpha'=\frac{\i(1+2\i)(2+3\i)}3, \ \ \ 
\beta'=\frac{\i\,7(1+\i)(2+\i)(4+\i)}{3^2}.
\end{equation*}
Then \,\(\lambda=-{\alpha'}^4+{\beta'}^2\i\)\, and 
\,\(Q=({\alpha'}^2\i,\,\alpha'\beta')\)\, is in  \,\(\mathscr{E}_{\lambda}(\mathbf{Q}(\i))\)\, 
and \,\(P=[1+\i]\,Q\). 
We do not know the point \,\(Q\)\, generates how much part of the MW-group. 
}
\end{example}
\vskip 10pt
\newpage
\section{Vanishing EGS and Kummer-type congruence\label{main_result}}\ 
\vskip 5pt
\noindent
We rewrite the expansion (\ref{coeff_D}) of \(\mathrm{cl}(u)\). 
Namely, we define \,\(G_n\)\, in \,\(\mathbf{Z}\)\, by 
\begin{equation*}
    \mathrm{cl}(u)
    =\sum_{n=0}^{\infty}G_{n}\,\frac{\,u^n\,}{n!}
    =1-2u^2+12\,\frac{u^4}{4!}-{216}\,\frac{u^6}{6!}+{7056}\,\frac{u^8}{8!}-{368928}\,\frac{u^{10}}{10!}+\cdots.
\end{equation*}
Of course \(G_n=n!\,{D_n}\). 
We denote by \,\(H_{\ell}\)\, the \textit{Hasse invariant} of \,\(y^2=x^3-x\)\, at \,\(\ell\ (\equiv1\bmod{4})\),
namely, 
\begin{equation*}
H_{\ell}=\lambda+\overline{\lambda}\equiv(-1)^{(\ell-1)/4}\,\binom{\,\frac{\ell-1}{2}\,}{\frac{\ell-1}{4}}\bmod{\ell}. 
\end{equation*}
Our main result is the following theorem. 
\vskip 5pt
\begin{theorem}\label{0212a}
The followings are equivalent\,{\rm:}
\begin{oitem}
\item \(\mathrm{egs}({\lambda})=0\)\,{\rm;}
\item \(\ell\ \big|\ {G}_{\frac34(\ell-1)}\)\,{\rm; \ (}This is a special case of {\rm (\ref{initial_d}).)}
\item\label{initial_d}
Let \(e\) be any positive integer satisfying \,\(e\equiv\frac34(\ell-1)\,\bmod{(\ell-1)}\). 
Then for any \(a\geq0\), 
\begin{equation*}
\sum_{r=0}^a\binom{\,a\,}{r}(-H_\ell)^{a-r}\frac{\,G_{e+r(\ell-1)}\,}{e+r(\ell-1)}
\equiv0\bmod{{\ell\,}^{a-\lfloor\frac{a}{\ell}\rfloor+1}}. 
\end{equation*}
\end{oitem}
\end{theorem}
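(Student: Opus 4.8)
The plan is to exploit the algebraic identity $\mathrm{egs}(\lambda)=A_\lambda\tilde\lambda^3$ from Theorem~\ref{egs_A_lambda} together with the fact that $\tilde\lambda^4=-\lambda$, so that $\mathrm{egs}(\lambda)=0$ if and only if $A_\lambda=0$, if and only if $a_\lambda=0$. The first task is therefore to translate the vanishing of the elliptic Gauss sum into a $\lambda$-adic statement about the coefficients $G_n$ of $\mathrm{cl}(u)$. Following the method of \cite{O}, one expresses $\psi(r/\lambda)$ through the formal-group logarithm (equivalently, the power series $\mathrm{cl}$ evaluated $\lambda$-adically at $(1-\i)\varpi r/\lambda$), reduces modulo a suitable power of $\lambda$, and sums against the quartic character $\chi_{{}_\lambda}$ over $S\cup\i S$. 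Congruences of the Kummer type arise because the character sum $\sum_\nu \chi_{{}_\lambda}(\nu)\nu^{\,m}$ over a $1/4$-set vanishes unless $m\equiv \frac34(\ell-1)\pmod{\ell-1}$, which is exactly the exponent class appearing in (3). So the equivalence (1)$\Leftrightarrow$(2) should come out of the $a=0$ case of the computation, after matching $-\tfrac12 D_{\frac34(\ell-1)}$ (the quantity appearing in Theorem~\ref{cong_for_1_mod_8}) with $-\tfrac12 G_{\frac34(\ell-1)}/(\tfrac34(\ell-1))!$ and clearing the unit $(\tfrac34(\ell-1))!$ to land on the cleaner statement $\ell\mid G_{\frac34(\ell-1)}$.

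Next I would set up the general Kummer congruence (3). The key input is that the formal group of $\mathscr{E}_\lambda$ over $\Z[\i]_\lambda\simeq\Z_\ell$ has, by the additive-reduction analysis sketched in \S\ref{8n+1}, a Frobenius-type relation governed by the Hasse invariant $H_\ell=\lambda+\overline\lambda$; concretely, the relevant generating function $F(u)=\sum_n G_{e+n(\ell-1)}u^n/(e+n(\ell-1))$ (a partial sum of the $\mathrm{cl}$-expansion restricted to one residue class of exponents) should satisfy a functional equation whose "Euler factor" is $1-H_\ell\,u+\ell\,u^2$ up to $\ell$-adic units — this is the standard shape for a curve whose $L$-factor at $\ell$ is $1-a_\ell T+\ell T^2$ with $a_\ell\equiv H_\ell$. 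The binomial combination $\sum_r\binom ar(-H_\ell)^{a-r}G_{e+r(\ell-1)}/(e+r(\ell-1))$ is precisely the $a$-th coefficient produced by iterating such a relation, so its $\ell$-adic order grows linearly in $a$. I would make this precise by an induction on $a$: the $a\to a+1$ step multiplies through by the Euler factor, and each application of "$\,-H_\ell+\ell(\text{shift})\,$" contributes one more power of $\ell$, except that divisibility by $\ell$ among the indices $e+r(\ell-1)$ themselves (which happens once every $\ell$ steps) eats a power of $\ell$ in the denominator — this is the source of the correction term $-\lfloor a/\ell\rfloor$ in the exponent.

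The implications then assemble as follows. (1)$\Rightarrow$(3): vanishing of $\mathrm{egs}(\lambda)$ forces $a_\lambda=0$, hence the $a=0$ congruence becomes an exact $\ell\mid G_{\frac34(\ell-1)}$ with no error, and feeding this into the functional-equation recursion propagates the divisibility to all $a$ with the stated exponent. (3)$\Rightarrow$(2) is the specialization $a=0$. (2)$\Rightarrow$(1): here one uses Lemma~\ref{less_than_l_0} (the bound $|A_\lambda|<\tfrac12\ell$, equivalently $|a_\lambda|$ small) exactly as in the deduction of the "minimal residue" halves of Theorems~\ref{O_congr13} and~\ref{cong_for_1_mod_8} — the congruence $A_\lambda\equiv-\tfrac12 D_{\frac34(\ell-1)}\pmod{\widetilde\lambda_0}$ together with $\ell\mid G_{\frac34(\ell-1)}$ pins $A_\lambda$ to a residue class of absolute value less than $\tfrac12\ell$ that contains $0$, whence $A_\lambda=0$ and $\mathrm{egs}(\lambda)=0$.

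The main obstacle I anticipate is establishing the functional equation / Frobenius relation for the restricted generating function $F(u)$ with the correct Euler factor $1-H_\ell u+\ell u^2$ and, in particular, tracking the precise $\ell$-adic valuation loss coming from the denominators $e+r(\ell-1)$ — i.e.\ justifying the exponent $a-\lfloor a/\ell\rfloor+1$ rather than simply $a+1$. This requires a careful Clausen--von~Staudt / Kummer-style analysis of when $\ell$ divides $e+r(\ell-1)$ and with what multiplicity, combined with control of the $\ell$-integrality of the $G_n/n$, which is where the additive (not good, not multiplicative) reduction type makes the bookkeeping more delicate than in the classical Kubota--Leopoldt situation. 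Everything else — the character-sum vanishing, the identification with $\mathrm{cl}$-coefficients, and the final descent via Lemma~\ref{less_than_l_0} — is routine given the results already quoted.
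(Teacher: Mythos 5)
Your outer structure — (3)$\Rightarrow$(2) by specializing $a=0$, and (2)$\Rightarrow$(1) by combining $A_\lambda\equiv-\tfrac12 D_{\frac34(\ell-1)}\bmod{\widetilde\lambda_0}$ with the bound of Lemma~\ref{less_than_l_0} — matches the paper. The genuine gap is in the central implication (1)$\Rightarrow$(3). Your mechanism is: the vanishing of $\mathrm{egs}(\lambda)$ gives the exact divisibility $\ell\mid G_{\frac34(\ell-1)}$, and ``feeding this into the functional-equation recursion propagates the divisibility to all $a$.'' No formal recursion can do that: from a single congruence modulo $\ell$ you cannot manufacture congruences modulo $\ell^{\,a-\lfloor a/\ell\rfloor+1}$ for every $a$; in fact (2) and (3) are equivalent to each other only through the analytic detour (2)$\Rightarrow$(1), so any argument for (3) must use the \emph{exact} vanishing $\mathrm{egs}(\lambda)=0$ as a statement in $\overline{\Z_\ell}$, not merely its mod-$\ell$ shadow. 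In the paper this is exactly where the work lies: one first proves the $\ell$-adic formula $\mathrm{egs}(\lambda)=(\mathrm{Cl}\circ f_0)(\eta)$ via the strict isomorphism between the Lubin--Tate group for $[\lambda](x)=\lambda x+x^\ell$ and the formal group of $\mathrm{sl}$ (Lemma~\ref{0213a2}); the hypothesis $\mathrm{egs}(\lambda)=0$ then forces $x-\eta$ and all its conjugates, hence $\lambda x+x^\ell$, to divide $\mathrm{Cl}\circ f_0(x)$ in $\Z_\ell[[x]]$ (Lemma~\ref{Cl_over_lambda_x}). That integrality statement is the real input which your sketch never produces, and without it the higher congruences do not follow.

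Two further points. First, the engine producing the binomial sums with $(-H_\ell)^{a-r}$ is not a functional equation for a restricted generating function (which you never establish), but the differential operator $\varOmega_\ell=(d/du)^\ell-H_\ell\,d/du$ with $u=f_0(x)$: since the Lubin--Tate group is of type $\ell-H_\ell T+T^2$, Honda's theory plus the Hochschild formula show $\varOmega_\ell$ maps $\Z_\ell[[x]]$ into $\ell\,\Z_\ell[[x]]$ (Lemma~\ref{0216c2}), and iterating it $a$ times yields the exponent $a+1$ up to the correction. Second, your accounting for the correction term $-\lfloor a/\ell\rfloor$ (loss from $\ell$ dividing the indices $e+r(\ell-1)$) is not what happens in the proof: the loss comes from the $\lambda$-denominators of the Lubin--Tate logarithm when one rewrites $\mathrm{Cl}(u)/u=(\mathrm{Cl}\circ f_0)(x)/f_0(x)$ and must multiply by $\lambda^{\nu-1}$ to make the truncation up to degree $\nu\ell(\ell-1)$ integral (Lemma~\ref{0214c2}); you flag this as ``requiring careful analysis'' but supply none, and the passage from $e=\tfrac34(\ell-1)$ to general $e$ in the same class modulo $\ell-1$ (done in the paper by the binomial identity (\ref{0305f})) is also missing. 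So the proposal is incomplete precisely at the heart of the theorem.
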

\vskip 5pt
\noindent
In addition to this theorem, we have the following result. 
\vskip 5pt
\begin{theorem}\label{two_term_congr}
Assume that \,\(\mathrm{egs}(\lambda)=0\). 
Let \,\(b\ge 0\)\, be a given integer and \(e\) be any positive integer satisfying \,\(e\equiv\frac34(\ell-1)\,\bmod{(\ell-1)}\). 
Then the congruence 
\begin{equation}\label{0306f}
\frac{G_{e+k(\ell-1)}}{e+k(\ell-1)}
\equiv {H_{\ell}\,}^k\,{\cdot}\,\frac{\,G_e\,}{e} 
\bmod{\ell^{\,b+2}}
\end{equation}
holds for any \,\(k\)\, such that \,\({\ord}(k)\ge b\). 
\end{theorem}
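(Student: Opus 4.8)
The plan is to deduce this two‑term congruence as a special case of the multi‑term Kummer‑type congruence in Theorem \ref{0212a}(\ref{initial_d}), by an inductive unwinding of the binomial sum. Write $\displaystyle c_j = \frac{G_{e+j(\ell-1)}}{e+j(\ell-1)}$ for $j\ge 0$; note each $c_j$ lies in $\Z_\ell$ (by the integrality of $G_n/n!$ together with $n!\,D_n\in\Z$ and the von Staudt–type bounds used already for $-\tfrac12 D_{\frac34(\ell-1)}$), and put $H=H_\ell$. Under the hypothesis $\mathrm{egs}(\lambda)=0$, Theorem \ref{0212a} gives, for every $a\ge 0$,
\begin{equation*}
\sum_{r=0}^{a}\binom{a}{r}(-H)^{a-r}\,c_r \equiv 0 \bmod \ell^{\,a-\lfloor a/\ell\rfloor+1}.
\end{equation*}
The first step is to recognize the left‑hand side as the $a$‑th finite difference of the sequence $j\mapsto H^{-j}c_j$ up to a unit power of $H$: indeed, since $H\not\equiv 0\bmod\ell$ (as $\ell\nmid H_\ell$ when $\mathrm{egs}(\lambda)=0$ forces $\ell\mid G_{\frac34(\ell-1)}$, and $H_\ell\equiv\pm\binom{(\ell-1)/2}{(\ell-1)/4}$ is then the relevant nonzero residue — I should double‑check this sign/nonvanishing point, see below), $H$ is a $\lambda$‑adic unit, so dividing by $H^a$ we get $(\Delta^a d)_0\equiv 0\bmod\ell^{\,a-\lfloor a/\ell\rfloor+1}$ where $d_j=H^{-j}c_j$ and $\Delta$ is the forward difference operator. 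Translating along the arithmetic progression $e+k(\ell-1)$ and using that the congruence of Theorem \ref{0212a} is stated for \emph{any} $e$ in the residue class, I get the same estimate for $(\Delta^a d)_m$ for every starting index $m\ge 0$.

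The second and main step is the combinatorial inversion: I want $d_k\equiv d_0\bmod\ell^{\,b+2}$ whenever $\ord(k)\ge b$, i.e. $\ell^b\mid k$. Write $d_k-d_0=\sum_{a=1}^{k}\binom{k}{a}(\Delta^a d)_0$ (the standard Newton forward‑difference expansion of $d_k$ around $0$). So it suffices to show that each term $\binom{k}{a}(\Delta^a d)_0$ is divisible by $\ell^{b+2}$. From step one, $\ord\big((\Delta^a d)_0\big)\ge a-\lfloor a/\ell\rfloor+1$; and $\ord\binom{k}{a}\ge b-\ord(a)\ge b-\lfloor\log_\ell a\rfloor$ by Kummer's theorem on binomial coefficients (since $\ell^b\mid k$ and $\binom{k}{a}$ loses at most $\ord(a)$ factors of $\ell$ — more precisely $\ord\binom{k}{a}=\ord(k)-\ord(a)$ when $a\le \ell^{\ord(k)}$, and is $\ge b-\ord(a)$ in general). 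Adding, $\ord\big(\binom{k}{a}(\Delta^a d)_0\big)\ge (b-\ord(a))+(a-\lfloor a/\ell\rfloor+1)$, and the elementary estimate $a-\lfloor a/\ell\rfloor - \ord(a)\ge 1$ for all $a\ge 1$ (which one checks by splitting into $a<\ell$, where $\ord(a)=0$ and $\lfloor a/\ell\rfloor=0$, versus $a\ge \ell$, where $a-\lfloor a/\ell\rfloor\ge a(1-1/\ell)$ grows faster than $\log_\ell a\ge\ord(a)$) yields $\ord\big(\binom{k}{a}(\Delta^a d)_0\big)\ge b+2$, as required. Multiplying the resulting congruence $d_k\equiv d_0$ back by $H^k$ (a unit, and $H^k\equiv H^k$ trivially) and absorbing — wait, I actually need $c_k\equiv H^k c_0$, i.e. $d_k\equiv d_0$, which is exactly what was shown; so $c_k = H^k d_k \equiv H^k d_0 = H^k c_0\bmod\ell^{b+2}$, which is \eqref{0306f}.

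The step I expect to be the main obstacle is the bookkeeping at the junction of the two $\ell$‑adic estimates — in particular pinning down $\ord\binom{k}{a}$ uniformly in $a$ (Kummer's theorem gives the carry count, but turning "$\ell^b\mid k$" into "$\ord\binom{k}{a}\ge b-\ord(a)$" for \emph{all} $a$, including $a>\ell^{\ord(k)}$, needs a small separate argument that the extra carries never \emph{decrease} the valuation below $b-\ord(a)$), and verifying the clean inequality $a-\lfloor a/\ell\rfloor-\ord(a)\ge 1$ with no off‑by‑one slippage; the edge cases $a=1,\dots,\ell$ and $k$ with $\ord(k)=b$ exactly are where a sign or a floor could bite. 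A secondary point to nail down is the nonvanishing of $H_\ell\bmod\ell$ under the hypothesis $\mathrm{egs}(\lambda)=0$ so that division by $H^k$ is legitimate; this should follow from the good‑reduction structure (the Hasse invariant $\lambda+\overline\lambda$ is the trace of Frobenius and is a unit at $\ell$ precisely when $y^2=x^3-x$ is ordinary at $\ell$, which holds for all $\ell\equiv1\bmod 4$), but I would state it explicitly. Everything else is routine manipulation of finite differences and the integrality facts already established in §\ref{preliminaries} and §\ref{8n+1}.
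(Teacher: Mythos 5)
Your argument is correct, but it takes a genuinely different route from the paper. The paper proves Theorem \ref{two_term_congr} by induction on \(b\): the base case is \(a=1\) of Theorem \ref{0212a}\,(3), and the inductive step takes \(a=\ell^{c}\) there, pairs the terms \(r\) and \(\ell^{c}-r\), and uses \(\ord\binom{\ell^{c}}{r}=c-\ord(r)\) together with the induction hypothesis (applied with \(b=\ord(r)<c\) and shifted \(e\)) to kill all middle terms modulo \(\ell^{c+2}\), leaving exactly the two extreme terms. You instead invert the Kummer-type congruences all at once: writing \(d_j=H_\ell^{-j}\,G_{e+j(\ell-1)}/(e+j(\ell-1))\), the sum in Theorem \ref{0212a}\,(3) is \(H_\ell^{a}(\Delta^{a}d)_0\), and the Newton expansion \(d_k-d_0=\sum_{a=1}^{k}\binom{k}{a}(\Delta^{a}d)_0\) plus the two valuation bounds \(\ord\binom{k}{a}\ge \ord(k)-\ord(a)\) and \(a-\lfloor a/\ell\rfloor-\ord(a)\ge 1\) gives the result directly, with no induction. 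The worries you flag all resolve cleanly: \(\ord\binom{k}{a}\ge\ord(k)-\ord(a)\) for all \(1\le a\le k\) follows from the identity \(a\binom{k}{a}=k\binom{k-1}{a-1}\) (no Kummer carry analysis needed); the inequality \(a-\lfloor a/\ell\rfloor-\ord(a)\ge1\) holds for every \(a\ge1\) since \(\ell\ge5\) (for \(\ord(a)=v\ge1\) one has \(a-\lfloor a/\ell\rfloor\ge \ell^{v-1}(\ell-1)\ge v+1\)); and \(H_\ell=\lambda+\overline{\lambda}\equiv\overline{\lambda}\not\equiv0\bmod{\lambda}\) under the identification (\ref{Z_ell_Zi_lambda}), so dividing by \(H_\ell\) is legitimate (the paper's pairing argument never needs this division). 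Note also that the asserted \(\ell\)-integrality of each individual \(G_{e+j(\ell-1)}/(e+j(\ell-1))\) is not actually needed, since your estimates apply to the differences \((\Delta^a d)_0\) directly via Theorem \ref{0212a}. What each approach buys: yours treats every \(k\) with \(\ord(k)\ge b\) in one stroke (the paper handles \(k=\ell^{c}\) and leaves the extension to general \(k\), by iterating with shifted \(e\), implicit), while the paper's pairing argument stays entirely inside the symmetric binomial sum and avoids both the Newton inversion and the unit property of \(H_\ell\).
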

\vskip 16pt
\section{\(\ell\)-adic explicit formula of an elliptic Gauss sum\label{proof_part1}}\ 
\vskip 5pt
\noindent
Recall our identification of \(\Z[\i]_{\lambda}\) and \(\Z_{\ell}\). 
As we treat a plenty of power series in \(\overline{\Z_{\ell}}[[x]]\) in this paper,
we summarize convention on notation here. 
Let \,\(f(x)\)\, and \,\(g(x)\)\, be power series in \,\(\overline{\Z_{\ell}}[[x]]\).  
For a rational number \,\(a\)\, in \,\(\Q\), we write
\begin{equation*}
f(x)\equiv g(x) \bmod{\lambda^a}
\end{equation*}
if all the coefficients of the terms in \,\(f(x)-g(x)\)\, have \,\(\ell\)-adic order at least \,\(a\). 
For a positive integer \,\(m\), we write
\begin{equation*}
f(x)\equiv g(x)\ \mathrm{mod\ deg}\,m
\end{equation*}
if \,\(f(x)-g(x)\)\, belongs to \,\(x^m\,\Z_{\ell}[[x]]\). 
Moreover, we write
\begin{equation*}
f(x)\equiv g(x)\ \mathrm{mod\ deg}\,m, \ \bmod{\,\lambda^a}
\end{equation*}
if all the coefficients of the terms of degree less than \,\(m\)\, in \,\(f(x)-g(x)\)\, 
have \,\(\ell\)-adic order at least \,\(a\). 
From now on, the number
\begin{equation*}
{d}=\frac34(\ell-1)
\end{equation*}
appears frequently. 
Taking a primitive \((\ell-1)\)-th root \(\zeta\) of \(1\) in \(\Z_\ell\), 
we define
\begin{equation}\label{def_Cl}
\mathrm{Cl}(u)=\frac12\sum_{j=0}^{\ell-1}\zeta^{-{d}j}\,\mathrm{cl}(\zeta^j u). 
\end{equation}
Then we have
\begin{equation*}
\mathrm{Cl}(u)=\frac{\ell-1}2\sum_{a=0}^\infty G_{{d}+a(\ell-1)}
\frac{u^{{d}+a(\ell-1)}}{({d}+a(\ell-1))!}. 
\end{equation*}
Thus \,\(G_{{d}+a(\ell-1)}/({d}+a(\ell-1))\)\, is 
the coefficient of the power series expansion of \,\((\ell-1)^{-1}\,\mathrm{Cl}(u)/u\)\, of 
\,\({u^n}/{n!}\)\, with \,\(n={d}+a(\ell-1)-1\). 
\par
For a proof of Theorem \ref{0212a}, 
we give an \,\(\ell\)-adic explicit formula of \,\(\mathrm{egs}(\lambda)\)\,
by using the Lubin-Tate formal group. 

Let \({\LT}(x,y)\) be 
the Lubin-Tate formal group over \,\(\Z_\ell\)\, corresponding 
to \,\(\lambda\)-plication \,\([\lambda]_{\LT}(x)=\lambda x+x^{\ell}\). 
Then non-zero points of the group \,\({\LT}[\lambda]\)\, of the \,\(\lambda\)-division points are roots of 
\,\(\lambda+x^{\ell-1}=0\). 
Let \,\(f_0(x)\)\, be the formal logarithm of \,\({\LT}(x,y)\). 
It follows from \,\([\lambda]_{\LT}(x)={f_0}^{-1}(\lambda f_0(x))\equiv x^{\ell}\bmod{\lambda}\)\,
that \,\(\lambda f_0(x)\equiv f_0(x^\ell) \bmod{\lambda}\)\, 
by Proposition 4.2 of Honda \cite{Ho}.
Namely, \({\LT}(x,y)\)\, is of type \,\(\lambda-T\). 
Let \,\({\sl}\,(x,y)\) be the formal group defined by 
\begin{equation*}
\smash{{\sl}\,(x,y)=\mathrm{sl}(\mathrm{sl}^{-1}(x)+\mathrm{sl}^{-1}(y))}
\end{equation*}
By the definition of \,\({\sl}\,(x,y)\), 
\(\lambda\)-plication \,\([\lambda]_{\sl}(x)\)\, satisfies 
\begin{equation*}
[\lambda]_{\sl}\,\circ \mathrm{sl}(x)=\mathrm{sl}(\lambda x).
\end{equation*}
Thus \,\(\varLambda\)\, is a points of 
the group \,\({\sl}\,[\lambda]\)\, of \,\(\lambda\)-division points. 
As is well-known (see for instance, Proposition 8.2 of Lemmermeyer \cite{Le} 
or Theorem 1.28 in \cite{O2} which gives another proof by usinig the relation \,\(\wp(u)=\mathrm{sl}(u)^{-2}\)\,), 
it holds that
\begin{equation*}
[\lambda]_{{\sl}}(x)\equiv x^\ell \bmod{\lambda}. 
\end{equation*}
Hence, the formal group \,\({\sl}\,(x,y)\)\, is of type \,\(\lambda-T\)\, as well. 
Since the formal group \,\({\LT}(x,y)\)\, is of the same type \,\(\lambda-T\), 
there exists the unique strong isomorphism  \,\(\iota\)\, over \,\(\Z_\ell\)\, from 
\({\LT}(x,y)\)\, to \,\({\sl}\,(x,y)\). 
Namely, there uniquely exists \,\(\iota(x)\)\, in \,\(\Z_{\ell}[[x]]\)\, such that
\begin{equation*}
\iota\big(\LT(x,y)\big)=\sl\,\big(\iota(x),\iota(y)\big), \ \ 
\iota(x)\equiv x\ \mbox{mod deg}\ {2}.
\end{equation*}
Then there exists \,\(\eta\)\, of the group \,\({\LT}[\lambda]\)\, 
of \,\(\lambda\)-division points of \,\({\LT}(x,y)\)\, such that 
\begin{equation*}
\varLambda=\varphi(1/\lambda)=\iota(\eta).
\end{equation*}
We recall that \,\(\eta^{\ell-1}=-\lambda\) (see (\ref{varLambda_lambda})). 
Since 
\begin{equation*}
\mathrm{cl}(u)=\phi\circ \mathrm{sl}(u),\ \ \mbox{where} \ \ \phi(x)=\smash{\sqrt{\frac{1-x^2}{1+x^2}}}, 
\end{equation*}
we have
\begin{equation*}
\psi(1/\lambda)=\phi\circ \iota(\eta).
\end{equation*}
We note that \,\(\phi(x)\)\, is in \,\(\Z_\ell[[x]]\). 
\par
Taking a primitive \,\((\ell-1)\)-th root \,\(\zeta\)\, of \,\(1\)\, in \,\(\Z_\ell\), 
we define 
\begin{equation*}
\mathrm{Sl}(u)=\frac{1}{4}\sum_{j=0}^{\ell-1}\zeta^{-dj}\,\mathrm{sl}(\zeta^j u). 
\end{equation*}
Then we have
\begin{equation*}
\mathrm{Sl}(u)=\frac{\ell-1}{4}\sum_{a=0}^\infty 
C_{d+a(\ell-1)}u^{d+a(\ell-1)}
\end{equation*}
as well as 
\begin{equation*}
\smash{
\mathrm{Cl}(u)=\frac{\ell-1}{2}\sum_{a=0}^\infty D_{d+a(\ell-1)}
u^{d+a(\ell-1)}.}
\end{equation*}
\vskip 10pt
\begin{lemma}\label{0213a2}
{\rm (1)} If \,\(\ell \equiv 5 \bmod{8}\), 
the equation \,\(\mathrm{egs}(\lambda)=(\mathrm{Sl}\circ f_0)(\eta)\)\, holds. \\
{\rm (2)} If \,\(\ell \equiv 1 \bmod{8}\), 
the equation \,\(\mathrm{egs}(\lambda)=(\mathrm{Cl}\circ f_0)(\eta)\)\, holds. 
\end{lemma}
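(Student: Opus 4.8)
The plan is to push both sides of the asserted identity onto the Lubin--Tate group $\LT(x,y)$: there the elliptic Gauss sum becomes a sum over the nonzero $\lambda$-division points of $\LT$ twisted by a fourth-power character, while $\mathrm{Sl}$ (resp.\ $\mathrm{Cl}$) is exactly the operator projecting that sum onto its $\chi_{{}_\lambda}$-isotypic part, so the two agree after one character computation and a short bookkeeping with the action of $\vec{\mu}_4$.

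First I would fix the dictionary between the two formal groups. Since $\mathrm{sl}^{-1}$ is the formal logarithm of $\sl(x,y)$ and $f_0$ that of $\LT(x,y)$, the unique strong isomorphism $\iota\colon\LT\to\sl$ is the power series $\iota=\mathrm{sl}\circ f_0$, and it has coefficients in $\Z_\ell$ precisely because both groups are of Honda type $\lambda-T$. Moreover $\iota$, being the strong isomorphism of two formal $\Z_\ell$-modules (both carry such a structure, again by the type $\lambda-T$ property), is $\Z_\ell$-linear, i.e.\ $\iota\circ[r]_\LT=[r]_\sl\circ\iota$ for every $r\in\Z_\ell=\Z[\i]_\lambda$: for $r\in\Z$ this is automatic, and in general it follows from the rigidity of formal-group homomorphisms over the torsion-free ring $\Z_\ell$ (equality of linear coefficients forces equality). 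Combining with $\varLambda=\iota(\eta)$, the CM-description $\varphi(m/\lambda)=[m]_\sl(\varLambda)$ of the $\lambda$-torsion, and $\mathrm{cl}=\phi\circ\mathrm{sl}$, I get for every $m\in(\Z[\i]/\lambda)^\times$
\begin{equation*}
\varphi(m/\lambda)=\iota\big([m]_\LT(\eta)\big),\qquad
\psi(m/\lambda)=(\phi\circ\iota)\big([m]_\LT(\eta)\big),
\end{equation*}
where $\phi\circ\iota=\mathrm{cl}\circ f_0$ likewise lies in $\Z_\ell[[x]]$.

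Next I would unwind the averaging operators. Using the logarithm identity $\zeta^j f_0(x)=f_0\big([\zeta^j]_\LT(x)\big)$ (legitimate since $\zeta^j\in\Z_\ell$) the definitions of $\mathrm{Sl}$, $\mathrm{Cl}$ become, as identities of power series, $\mathrm{Sl}\circ f_0=\tfrac14\sum_j\zeta^{-dj}\,\iota\circ[\zeta^j]_\LT$ and $\mathrm{Cl}\circ f_0=\tfrac12\sum_j\zeta^{-dj}\,(\phi\circ\iota)\circ[\zeta^j]_\LT$. The point of this rewriting is that the right-hand sides visibly lie in $\Z_\ell[[x]]$ --- the degree shift by $d$ restores integrality even though $f_0$ itself is not integral --- so evaluation at the $\lambda$-division point $\eta$ is meaningful; here one must read $(\mathrm{Sl}\circ f_0)(\eta)$ as the value at $\eta$ of the integral composite, \emph{not} as $\mathrm{Sl}$ of the number $f_0(\eta)$ (which is $0$, since $[\lambda]_\LT(\eta)=0$): $f_0$ is the sole non-integral series involved, and it enters only through the integral composites $\iota$, $\mathrm{Sl}\circ f_0$, $\mathrm{Cl}\circ f_0$. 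Since $\zeta\bmod\lambda$ generates $(\Z[\i]/\lambda)^\times\cong\F_\ell^\times$ and $\LT[\lambda]$ is free of rank one over $\Z[\i]/\lambda$, the points $[\zeta^j]_\LT(\eta)$ run through all nonzero $\lambda$-division points of $\LT$, and by the dictionary $\iota\big([\zeta^j]_\LT(\eta)\big)=\varphi(m_j/\lambda)$ with $m_j\equiv\zeta^j\bmod\lambda$. Reindexing over $m\in(\Z[\i]/\lambda)^\times$,
\begin{equation*}
(\mathrm{Sl}\circ f_0)(\eta)=\tfrac14\sum_{m}\psi_0(m)\,\varphi(m/\lambda),\qquad
\psi_0(m):=\zeta^{-d\,\mathrm{ind}_\zeta(m)},
\end{equation*}
and likewise with $\tfrac14,\varphi$ replaced by $\tfrac12,\psi$ for $\mathrm{Cl}\circ f_0$. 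Because $d=\tfrac34(\ell-1)$ one has $-d\equiv\tfrac14(\ell-1)\bmod(\ell-1)$, hence $\psi_0(m)=\zeta^{\mathrm{ind}_\zeta(m)(\ell-1)/4}$ is the fourth root of unity congruent to $m^{(\ell-1)/4}\bmod\lambda$; that is, $\psi_0=\chi_{{}_\lambda}$.

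Finally I would fold the full character sum back onto the quarter-set $S$. For $\ell\equiv13\bmod16$ one has $\chi_{{}_\lambda}(\i)\,\i=1$ and $\varphi(\i u)=\i\,\varphi(u)$, so collecting the $\vec{\mu}_4$-orbits $\{r,\i r,-r,-\i r\}$ gives $\sum_m\chi_{{}_\lambda}(m)\varphi(m/\lambda)=\big(\sum_{k=0}^{3}(\chi_{{}_\lambda}(\i)\i)^k\big)\sum_{r\in S}\chi_{{}_\lambda}(r)\varphi(r/\lambda)=4\,\mathrm{egs}(\lambda)$, and the prefactor $\tfrac14$ cancels; this proves (1), and the case $\ell\equiv5\bmod16$ is the same argument with the twisted curve and the corresponding data of \cite{A},\,\cite{O}. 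For (2): $\mathrm{cl}$ is even, so $\psi(-u)=\psi(u)$, and $\chi_{{}_\lambda}(-1)=1$ since $\ell\equiv1\bmod8$, hence the sum over $(\Z[\i]/\lambda)^\times$ collapses pairwise onto the index-two subset $S\cup\i S$, giving $\sum_m\chi_{{}_\lambda}(m)\psi(m/\lambda)=2\,\mathrm{egs}(\lambda)$, which the prefactor $\tfrac12$ cancels. The main obstacles I anticipate are the two structural inputs: that the strong isomorphism $\iota$ is $\Z[\i]_\lambda$-linear (rigidity of formal-group maps over torsion-free rings), and the integrality of $\mathrm{Sl}\circ f_0$ and $\mathrm{Cl}\circ f_0$ that gives sense to evaluation at $\eta$; once these are in hand the identification $\psi_0=\chi_{{}_\lambda}$ is forced by the choice $d=\tfrac34(\ell-1)$ and the remaining combinatorial steps are routine.
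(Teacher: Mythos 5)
Your proposal is correct and follows essentially the same route as the paper: integrality of \(\mathrm{Sl}\circ f_0\) and \(\mathrm{Cl}\circ f_0\) via the strong isomorphism \(\iota\) between the two formal groups of type \(\lambda-T\), the identification \(\varphi(\alpha/\lambda)=(\mathrm{sl}\circ\zeta f_0)(\eta)\) through \(\varLambda=\iota(\eta)\), the character computation \(\chi_{{}_\lambda}(\alpha)=\zeta^{-d}\), and the \(\tfrac14\)/\(\tfrac12\) folding between the full sum over \((\Z[\i]/\lambda)^{\times}\) and the sum over \(S\) (resp.\ \(S\cup\i S\)). The only differences are cosmetic — you transport the \(\vec{\mu}_4\)- and \(\zeta\)-actions to the Lubin--Tate side via the \(\Z_\ell\)-linearity (rigidity) of \(\iota\) and spell out the orbit-folding and the caveat about evaluating the integral composite at \(\eta\), where the paper instead re-chooses \(\zeta\equiv\alpha\bmod\lambda\) and leaves those points implicit.
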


\begin{proof}
It follows from \,\([\zeta]_{{\sl}}(x)=\mathrm{sl}(\zeta\mathrm{sl}^{-1}(x)) \in \Z_\ell[[x]]\)\,
that \(\mathrm{Sl}\circ \mathrm{sl}^{-1}(x)\)\, is in \,\(\Z_\ell [[x]]\), 
and from \,\(\mathrm{sl}^{-1}\circ \iota(x)=f_0(x)\)\, that 
\,\(\mathrm{Sl}\circ f_0(x)\)\, is also in \,\(\Z_\ell [[x]]\). 
Since \,\(\mathrm{cl}(x)=\phi\circ \mathrm{sl}(x)\)\, and 
\,\(\phi(x)\)\, is in \,\(\Z_\ell[[x]]\), 
we see \,\(\mathrm{cl}(\zeta \mathrm{sl}^{-1}(x))=\phi\circ [\zeta]_{{\sl}}(x)\).  
Hence, \(\mathrm{Cl}\circ f_0(x)\)\, is in \,\(\Z_\ell [[x]]\). 
For \,\(\alpha\)\, in \,\(\Z[\i]\)\, coprime to \,\(\lambda\), 
we convert, if necessary, \,\(\zeta\)\, another \((p-1)\)-th root of \(1\) as satisfying \,\(\alpha\equiv \zeta \bmod{\lambda}\). 
Then \,\(\varphi(\alpha/\lambda)=[\alpha]_{{\sl}}(\varLambda)=[\zeta]_{{\sl}}(\varLambda)\).
Since \,\(\varLambda=\iota(\eta)\)\, and \,\(\mathrm{sl}^{-1}\circ \iota(x)=f_0(x)\), 
we have \,\(\varphi(\alpha/\lambda)
=(\mathrm{sl}\circ \zeta \mathrm{sl}^{-1})(\varLambda)
=(\mathrm{sl}\circ \zeta f_0)(\eta)\) .
We also have  \,\(\psi(\alpha/\lambda)=(\mathrm{cl}\circ \zeta f_0)(\eta)\).
Since \,\(\chi_{\lambda}(\alpha)=\chi_\lambda(\zeta)=\zeta^{-d}\),
we have 
\begin{equation*}
\mathrm{egs}(\lambda)=
\frac{1}{4}\sum_{\alpha=1}^{\ell-1}\chi_{\lambda}(\alpha)\varphi(\alpha/\lambda)
=(\mathrm{Sl}\circ f_0)(\eta)
\end{equation*}
in the case of \,\(\ell \equiv 5 \bmod{8}\), and 
\begin{equation*}
\mathrm{egs}(\lambda)=\frac{1}{2}
\sum_{\alpha=1}^{\ell-1}\chi_{\lambda}(\alpha)\psi(\alpha/\lambda)
=(\mathrm{Cl}\circ f_0)(\eta)
\end{equation*}
in the case of \,\(\ell \equiv 1 \bmod{8}\). 
This completes the proof of Lemma \ref{0213a2}.
\end{proof}

\begin{lemma}\label{0214d}
{\rm (1)} \, If \(\ell \equiv 5 \bmod{8}\), it holds that
\begin{equation*}
\mathrm{egs}(\lambda)\equiv \frac{\ell-1}{4}
C_{{d}}\,\eta^{{d}}
\bmod{\eta^\ell}.
\end{equation*}
{\rm (2)} \, If \,\(\ell \equiv 1 \bmod{8}\), 
it holds that
\begin{equation*}
\mathrm{egs}(\lambda)\equiv 
\frac{\ell-1}{2}
D_{{d}}\,\eta^{{d}}\bmod{\eta^\ell}.
\end{equation*}
\end{lemma}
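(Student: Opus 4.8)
The plan is to read Lemma \ref{0214d} off from Lemma \ref{0213a2} once the low-degree shape of the Lubin--Tate logarithm \(f_0\) is pinned down. By Lemma \ref{0213a2} we have \(\mathrm{egs}(\lambda)=(\mathrm{Sl}\circ f_0)(\eta)\) when \(\ell\equiv5\bmod8\) and \(\mathrm{egs}(\lambda)=(\mathrm{Cl}\circ f_0)(\eta)\) when \(\ell\equiv1\bmod8\); the two cases run identically, so I would treat the first and then substitute \((\mathrm{Cl},D_n,\frac12)\) for \((\mathrm{Sl},C_n,\frac14)\) at the end. Recall from \S\ref{proof_part1} that \(\mathrm{Sl}(u)=\frac{\ell-1}{4}\sum_{a\ge0}C_{d+a(\ell-1)}u^{d+a(\ell-1)}\), that \(\mathrm{Sl}\circ f_0\) lies in \(\Z_\ell[[x]]\), and that \(\ord(\eta)=\frac1{\ell-1}\) since \(\eta^{\ell-1}=-\lambda\) and \(\ord(\lambda)=1\).

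The one substantive point is the congruence \(f_0(x)\equiv x\ \mathrm{mod\ deg}\,\ell\). I would prove it by comparing coefficients in \(f_0\big([\lambda]_{\LT}(x)\big)=\lambda f_0(x)\), i.e. in \(f_0(\lambda x+x^\ell)=\lambda f_0(x)\), using that \([\lambda]_{\LT}(x)=\lambda x+x^\ell\): writing \(f_0(x)=x+\sum_{m\ge2}a_mx^m\), for \(2\le m\le\ell-1\) the coefficient of \(x^m\) in \((\lambda x+x^\ell)^k\) vanishes unless \(k=m\), where it is \(\lambda^m\), so the coefficient of \(x^m\) is \(a_m\lambda^m\) on the left and \(\lambda a_m\) on the right; since \(\ord(\lambda)=1\) gives \(\lambda^{m-1}\neq1\), this forces \(a_m=0\) for all \(2\le m\le\ell-1\). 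Hence \(f_0(x)=x+h(x)\) with \(h\in x^\ell\Z_\ell[[x]]\). As \(d\ge1\), this yields \(f_0(x)^d=x^d+(\text{degree}\ge d+\ell-1)\), and since every monomial of \(\mathrm{Sl}\) other than \(\frac{\ell-1}{4}C_du^d\) already has degree \(\ge d+\ell-1\ge\ell\), the part of \((\mathrm{Sl}\circ f_0)(x)\) of degree \(<\ell\) reduces to the single term \(\frac{\ell-1}{4}C_dx^d\). Therefore \((\mathrm{Sl}\circ f_0)(x)=\frac{\ell-1}{4}C_dx^d+x^\ell r(x)\) with \(r\in\Z_\ell[[x]]\) (all integrality being a consequence of \(\mathrm{Sl}\circ f_0\in\Z_\ell[[x]]\)). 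Evaluating at \(x=\eta\), the element \(r(\eta)\) is an algebraic integer, so \(\eta^\ell r(\eta)\equiv0\bmod\eta^\ell\), and we get \(\mathrm{egs}(\lambda)\equiv\frac{\ell-1}{4}C_d\eta^d\bmod\eta^\ell\); the same computation with \(\mathrm{Cl}\), \(D_d\) and \(\frac{\ell-1}{2}\) proves part (2).

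I do not expect a serious obstacle: the only thing to get right is the bookkeeping that isolates the degree-\(d\) coefficient, and the reason it works is the gap in the expansions of \(\mathrm{Sl}\) and \(\mathrm{Cl}\) — after the lowest term, in degree \(d=\frac34(\ell-1)\), the next one lives in degree \(d+\ell-1\ge\ell\), so the perturbation \(f_0(x)-x\), which begins only in degree \(\ell\), cannot manufacture any new term of degree \(<\ell\). This is precisely where the value \(d=\frac34(\ell-1)\) and the modulus \(\eta^\ell\) enter. One should also check the extreme case \(\ell=5\), where \(d=3\) and \(d+\ell-1=7\ge5\), so nothing degenerates; and it is worth noting that, combined with \(\mathrm{egs}(\lambda)=A_\lambda\tilde\lambda^3\) and \(\tilde\lambda^4=-\lambda=\eta^{\ell-1}\), this lemma recovers and refines the congruences of \S\ref{end_of_8n+5}.
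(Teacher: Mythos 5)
Your proposal is correct and takes essentially the same route as the paper: it derives \(f_0(x)\equiv x\ \mathrm{mod\ deg}\,\ell\) by comparing coefficients in \(f_0(\lambda x+x^{\ell})=\lambda f_0(x)\), concludes that the degree\(-<\ell\) part of \(\mathrm{Sl}\circ f_0\) (resp.\ \(\mathrm{Cl}\circ f_0\)) is the single monomial \(\frac{\ell-1}{4}C_d x^d\) (resp.\ \(\frac{\ell-1}{2}D_d x^d\)), and then uses the integrality \(\mathrm{Sl}\circ f_0,\ \mathrm{Cl}\circ f_0\in\Z_\ell[[x]]\) from Lemma \ref{0213a2} before evaluating at \(\eta\), exactly as in the paper's proof (whose displayed constants \(\frac{\ell-1}{2}\) and \(\frac{\ell-1}{4}\) are evidently interchanged by a typo; yours match the statement). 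One small slip: \(f_0(x)-x\) lies in \(x^{\ell}\,\Q_{\ell}[[x]]\) rather than \(x^{\ell}\,\Z_\ell[[x]]\) (the coefficient of \(x^{\ell}\) has \(\ell\)-adic order \(-1\)), but this is harmless since you use only the degree bound and obtain all integrality from \(\mathrm{Sl}\circ f_0\in\Z_\ell[[x]]\).
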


\begin{proof}
\!\!Since \,\(\lambda f_0(x)=f_0\circ [\lambda]_{{\LT}}=f(\lambda x+x^\ell)\),
we have \,\(\lambda f_0(x)\equiv f_0(\lambda x)\ \mathrm{mod\,deg}\ \ell\). 
Thus we have \,\(f_0(x) \equiv x\ \mathrm{mod\,deg}\ \ell\)\, and
\begin{equation*}
\mathrm{Sl}\circ f_0(x)\equiv \mathrm{Sl}(x)\equiv 
\frac{\ell-1}{2}C_{{d}}\, x^{{d}}\ 
\mathrm{mod\ deg}\ \ell.
\end{equation*}
Similarly we have 
\begin{equation*}
\mathrm{Cl}\circ f_0(x)\equiv \mathrm{Cl}(x)\equiv 
\frac{\ell-1}{4}D_{{d}}\,x^{{d}}\ 
\mathrm{mod\ deg}\ \ell.
\end{equation*}
Since \,\(\mathrm{Sl}\circ f_0(x)\)\, and 
\,\(\mathrm{Cl}\circ f_0(x)\)\, belong to  \,\(\Z_{\ell}[[x]]\), 
the assertion follows. 
\end{proof}
\textit{Proof of Theorem} \ref{cong_for_1_mod_8}. \ 
Because of \,\(\tilde{\lambda}=\gamma(S)^{-1}\smash{\prod\limits_{r\in S}\varphi(r/\lambda)}
\equiv \eta^{\frac{\ell-1}{4}} \bmod{\eta^{\frac{\ell-1}{4}+1}}\)\, 
and \(\mathrm{egs}(\lambda)=A_\lambda \tilde{\lambda}^3\), we have
\begin{equation*}
\smash{
A_\lambda
\equiv 
-\frac{1}{4}
C_{{d}}\,\bmod{\eta^{\frac{\ell-1}{4}+1}}
}
\end{equation*}
by using Lemma \ref{0214d}. 
Since both sides are rational number, 
we have the assertion of Theorem \ref{cong_for_1_mod_8}. 
\qed

%
%
%
%
%
\vskip 10pt
\section{Application of the Hochschild formula}\ 
\vskip 5pt
\noindent
In this section, we use the following formula known as the \textit{Hochschild formula}.  
For a proof of this formula, see Matsumura \cite{Ma}, p.197, Theorem 25.5.   
\begin{lemma}\label{Hochschild}
Let \,\(R\)\, be a commutative ring of characteristic \,\(\ell\). 
Let \,\(\delta\)\, be a derivation over \,\(R\). 
Then, for any element \,\(b\)\, in \,\(R\), we have 
\begin{equation*}
(b\delta)^\ell=b^\ell \delta^\ell+((b\delta)^{\ell-1}(b))\cdot{\delta}.
\end{equation*}
\end{lemma}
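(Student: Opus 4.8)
\
The plan is to expand \((b\delta)^{\ell}\) as a differential operator in \(\delta\) with coefficients in \(R\), to read off its extreme coefficients by hand, and to show that every intermediate coefficient vanishes because it is a rational multiple of a multinomial coefficient divisible by the prime \(\ell\). Since \(R\) is commutative, \(b\delta\) is again a derivation, and one proves by induction on \(n\), using \((b\delta)\circ(r\delta^{k})=b\,\delta(r)\,\delta^{k}+br\,\delta^{k+1}\), that as additive operators on \(R\)
\begin{equation*}
(b\delta)^{n}=\sum_{k=1}^{n}P_{k}^{(n)}\,\delta^{k},\qquad
P_{k}^{(n+1)}=b\,\delta\!\big(P_{k}^{(n)}\big)+b\,P_{k-1}^{(n)},\qquad P_{1}^{(1)}=b
\end{equation*}
(with \(P_{k}^{(n)}=0\) for \(k\le 0\)), all the \(P_{k}^{(n)}\) lying in \(R\). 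Feeding the recursion back in gives at once \(P_{n}^{(n)}=b^{n}\) and \(P_{1}^{(n)}=(b\delta)^{n-1}(b)\) for every \(n\). Thus the Hochschild formula is precisely the assertion that \(P_{k}^{(\ell)}=0\) for \(2\le k\le\ell-1\). Since the recursion has integer coefficients and starts from \(b\), each \(P_{k}^{(n)}\) is a universal polynomial with \(\mathbf{Z}\)-coefficients in the symbols \(b,\delta b,\delta^{2}b,\dots\); hence it suffices to prove \(P_{k}^{(\ell)}\equiv 0\bmod\ell\) in the universal ring \(R_{0}=\mathbf{Z}[b_{0},b_{1},b_{2},\dots]\) with derivation \(\delta(b_{i})=b_{i+1}\) and \(b=b_{0}\), since any characteristic-\(\ell\) datum \((R,\delta,b)\) receives the \(\delta\)-equivariant specialization \(b_{i}\mapsto\delta^{i}(b)\).

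Working in \(R_{0}\otimes\mathbf{Q}\), set \(c_{m}=(b\delta)^{m-1}(b)\) (so \(c_{1}=b\) and \((b\delta)(c_{m})=c_{m+1}\)) and \(F(t)=\sum_{m\ge 1}\tfrac{t^{m}}{m!}\,c_{m}\in (R_{0}\otimes\mathbf{Q})[[t]]\). I claim
\begin{equation*}
P_{k}^{(n)}=\frac{n!}{k!}\,[t^{n}]\,F(t)^{k}
=\frac{1}{k!}\sum_{\substack{m_{1}+\dots+m_{k}=n\\ m_{i}\ge 1}}\frac{n!}{m_{1}!\cdots m_{k}!}\,c_{m_{1}}\cdots c_{m_{k}}.
\end{equation*}
This is verified against the recursion above: from \((b\delta)(c_{m})=c_{m+1}\) one gets \((b\delta)(F)=F'(t)-b\), hence \((b\delta)(F^{k})=(F^{k})'-kb\,F^{k-1}\) by the product rule for the derivation \(b\delta\), and since \([t^{n}](G')=(n+1)[t^{n+1}]G\) the displayed expression satisfies the same recursion and the same initial data \(P_{k}^{(1)}\), so it holds for all \(n,k\). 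Taking \(n=\ell\): for \(2\le k\le\ell-1\) every tuple occurring has all \(m_{i}\le\ell-1\) (there are \(k\ge 2\) parts, each \(\ge 1\), summing to \(\ell\)), so each coefficient \(\ell!/(m_{1}!\cdots m_{k}!)\) is divisible by the prime \(\ell\); therefore \(k!\,P_{k}^{(\ell)}\equiv 0\bmod\ell\) in \(\mathbf{Z}[b_{0},b_{1},\dots]\), and since \(\gcd(k!,\ell)=1\) and \(\mathbb{F}_{\ell}[b_{0},b_{1},\dots]\) is a domain we conclude \(P_{k}^{(\ell)}\equiv 0\bmod\ell\). (For \(k=1\) and \(k=\ell\) the same formula returns \(c_{\ell}=(b\delta)^{\ell-1}(b)\) and \(b^{\ell}\), in agreement with the first paragraph.) Pulling this back along the specialization \(b_{i}\mapsto\delta^{i}(b)\) then proves the lemma.

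The one delicate point is the closed form of the second paragraph, together with the reduction to \(R_{0}\) and the observation that \(P_{k}^{(\ell)}\in R_{0}\) (immediate from the integer recursion), which is what makes reduction modulo \(\ell\) legitimate; after that the essential input is the elementary divisibility \(\ell\mid \ell!/(m_{1}!\cdots m_{k}!)\). As motivation and a consistency check, note that in characteristic \(\ell\) the \(\ell\)-th power of a derivation is again a derivation (because \(\binom{\ell}{j}\equiv 0\bmod\ell\) for \(0<j<\ell\)) and \(r\cdot D\) is a derivation for any \(r\in R\) and derivation \(D\); hence \((b\delta)^{\ell}\), \(b^{\ell}\delta^{\ell}\) and \(\big((b\delta)^{\ell-1}b\big)\delta\) are all derivations, so the asserted identity says exactly that a certain derivation --- a differential operator supported in \(\delta\)-degrees \(2,\dots,\ell-1\) --- is zero.
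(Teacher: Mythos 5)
Your proof is correct, but it cannot be ``the same as the paper's'': the paper offers no proof of Lemma \ref{Hochschild} at all, it only cites Matsumura (Theorem 25.5) for this formula of Hochschild. Your argument is therefore a genuinely different, self-contained route. The skeleton is sound: the expansion \((b\delta)^{n}=\sum_{k}P^{(n)}_{k}\delta^{k}\) with the integer recursion \(P^{(n+1)}_{k}=b\,\delta(P^{(n)}_{k})+b\,P^{(n)}_{k-1}\) correctly yields \(P^{(n)}_{n}=b^{n}\) and \(P^{(n)}_{1}=(b\delta)^{n-1}(b)\); the passage to the universal ring \(\mathbf{Z}[b_{0},b_{1},\dots]\) with \(\delta(b_{i})=b_{i+1}\) is legitimate because the recursion commutes with the specialization \(b_{i}\mapsto\delta^{i}(b)\) and the universal ring is \(\mathbf{Z}\)-torsion-free, so working in \(R_{0}\otimes\mathbf{Q}\) is harmless; the closed form \(P^{(n)}_{k}=\frac{n!}{k!}[t^{n}]F(t)^{k}\), with \(F\) the exponential generating function of \(c_{m}=(b\delta)^{m-1}(b)\), does satisfy the same recursion (the cross terms cancel exactly as you say), and for \(n=\ell\), \(2\le k\le\ell-1\), every composition has all parts \(\le\ell-1\), so each multinomial coefficient \(\ell!/(m_{1}!\cdots m_{k}!)\) is divisible by the prime \(\ell\), giving \(P^{(\ell)}_{k}\in\ell R_{0}\) after dividing by the unit \(k!\bmod\ell\). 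Two small points of hygiene: the statement implicitly requires \(\ell\) prime (true everywhere in the paper, and your multinomial and \(\gcd(k!,\ell)=1\) steps use it), and ``the Hochschild formula is precisely the assertion that \(P^{(\ell)}_{k}=0\)'' should read ``is implied by'' --- only that direction is needed, since the operators \(\delta^{k}\) need not be independent over \(R\). What your approach buys is an elementary, explicit derivation showing exactly where characteristic \(\ell\) enters; what the paper's citation buys is brevity, outsourcing the identity to the standard reference.
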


We put \,\(u=f_0(x)\). 
By the definition of \,\(H_{\ell}\), we have 
\((\overline{\lambda}-T)(\lambda-T)=\ell-H_\ell T+T^2\). 
Since \,\(\overline{\lambda}-T\)\, is a unit in \,\(\Z_{\ell}[[T]]\), 
any formal group over \,\(\Z_{\ell}\) of type \,\(\lambda-T\) 
is also of type \,\(\ell-H_\ell T+T^2\). 

\begin{lemma}\label{0216c2}
Let \(\phi(x)\) be a power series in \(\Z_\ell [[x]]\). 
Then 
\begin{equation}\label{0214b2}
\bigg(\Big(\frac{d}{du}\Big)^{\ell}-H_{\ell}\frac{d}{du}\bigg)
\phi(x) \in \ell \Z_\ell[[x]].
\end{equation}
\end{lemma}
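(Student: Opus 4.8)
The plan is to exploit the fact, noted just before the statement, that any formal group over $\Z_\ell$ of type $\lambda-T$ is also of type $\ell-H_\ell T+T^2$. Concretely, writing $u=f_0(x)$ for the formal logarithm of $\LT(x,y)$ (or of any type $\lambda-T$ group), the Honda-type condition means $f_0(x)\equiv \ell^{-1}\bigl(x^{\ell^2}+H_\ell x^\ell\bigr)$ modulo suitable terms, or more usefully that $x\mapsto x^\ell$ reduces mod $\ell$ to the endomorphism $[\lambda]$, hence mod $\ell$ the $\ell$-power Frobenius $\Phi$ on $R=\Z_\ell[[x]]/\ell$ satisfies a quadratic relation $\Phi^2-H_\ell\Phi+\ell=0$, i.e. $\Phi^2=H_\ell\Phi$ in characteristic $\ell$. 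The first step is therefore to pin down the precise statement: on $R$, the operator $D=d/du$ (the invariant derivation dual to $du$) commutes with the Frobenius up to the structure of the formal group, and more to the point, $D^\ell$ and $\Phi$ are related.

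Here is where the Hochschild formula (Lemma \ref{Hochschild}) enters. Over $R$, which has characteristic $\ell$, take the derivation $\delta=d/dx$ and the element $b$ so that $b\,\delta=d/du$; explicitly $b=du/dx=f_0'(x)^{-1}$ wait — rather $b=(dx/du)$, since $\frac{d}{du}=\frac{dx}{du}\frac{d}{dx}$. Applying Lemma \ref{Hochschild} gives
\begin{equation*}
\Bigl(\tfrac{d}{du}\Bigr)^{\ell}=b^\ell\,\delta^\ell+\Bigl(\bigl(b\delta\bigr)^{\ell-1}(b)\Bigr)\cdot\delta .
\end{equation*}
In characteristic $\ell$ one has $\delta^\ell=(d/dx)^\ell=0$ on $R$ (the $\ell$-th derivative kills everything, since $\binom{n}{\ell}\equiv 0$ forces the relevant binomials to vanish — more precisely $(d/dx)^\ell$ annihilates $\Z_\ell[[x]]/\ell$ because it annihilates each monomial $x^n$ with coefficient a multiple of $\ell$... this needs a tiny check, but it is the standard fact that $\delta^\ell$ is a derivation and vanishes on the generators $x$ for $\ell\ge 2$). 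Hence mod $\ell$,
\begin{equation*}
\Bigl(\tfrac{d}{du}\Bigr)^{\ell}=\Bigl(\bigl(\tfrac{d}{du}\bigr)^{\ell-1}(b)\Bigr)\cdot\tfrac{d}{dx}
=\Bigl(\bigl(\tfrac{d}{du}\bigr)^{\ell-1}\!\bigl(\tfrac{dx}{du}\bigr)\Bigr)\cdot\tfrac{d}{dx}.
\end{equation*}
So $D^\ell$ is again a derivation, hence a multiple of $D$ by a single function, say $D^\ell=c(x)\,D$ in $R$. The second step is to identify $c(x)$ as the constant $H_\ell$ mod $\ell$. For this I would use the type condition directly: the invariant differential $du$ of a type $\ell-H_\ell T+T^2$ formal group satisfies, upon pulling back by $[\lambda]$ and using $\Phi$, a relation that forces $D^\ell$ acting on the logarithm (or on $x$ itself) to produce exactly $H_\ell\,Dx$ modulo $\ell$. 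Equivalently: the kernel of $D$ on $R$ is the Frobenius image $R^{(\ell)}$, and $D^\ell$ kills $R^{(\ell)}$ (since $D^\ell$ is a derivation factoring through $\Phi$-twists), so $D^\ell=c\,D$; evaluating both sides on the coordinate $x$ and comparing the leading behaviour — using that $f_0(x)\equiv x \bmod \deg \ell$ as shown in Lemma \ref{0214d}, so that $du\equiv dx$ to that order and one can read off $c\equiv H_\ell$ from the order-$\ell$ correction $f_0(x)\equiv x+\ell^{-1}(x^\ell+\cdots)$ — the coefficient $H_\ell$ being exactly the trace $\lambda+\overline\lambda$ appearing in the type polynomial.

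The main obstacle I anticipate is the bookkeeping in the second step: showing that the function $c(x)$ produced by Hochschild's identity is genuinely the constant $H_\ell$ and not merely some power series congruent to it in low degree. The clean way around this is to argue structurally rather than by brute computation: both $D^\ell-H_\ell D$ and the zero operator are derivations on $R$ that annihilate $R^{(\ell)}=\ker D$, and a derivation annihilating $\ker D$ is determined by its value on any single element $t$ with $Dt$ a unit; so it suffices to check the identity modulo $\ell$ on $t=u=f_0(x)$, where $Du=1$, reducing the claim to $D^{\ell}(u)\equiv H_\ell\pmod\ell$ — but $D^\ell(u)=D^{\ell-1}(1)=0$ unless... so in fact one tests on $t=x$ and uses the explicit first-order-in-$\ell$ shape of $f_0$. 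Once $c=H_\ell$ is established in $R$, lifting back says precisely that $\bigl((d/du)^\ell-H_\ell(d/du)\bigr)\phi(x)\in\ell\,\Z_\ell[[x]]$ for every $\phi\in\Z_\ell[[x]]$, which is \eqref{0214b2}. I would also double-check the edge case $\phi$ a constant (trivially in the kernel) and confirm that the operator $(d/du)$ is the one with respect to which $\mathrm{Cl}(u)$, $\mathrm{Sl}(u)$ have their expansions, so that this lemma feeds correctly into the recursion behind Theorem \ref{0212a}.
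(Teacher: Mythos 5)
Your framework is the right one (Hochschild's formula, reduction to characteristic $\ell$, the fact that $\LT$ is of type $\ell-H_\ell T+T^2$), but there is a genuine gap at the decisive step. Applying Lemma \ref{Hochschild} with $b=dx/du$, $\delta=d/dx$ and killing $\delta^\ell$ modulo $\ell$ gives you an operator identity $(d/du)^\ell\equiv c(x)\,\tfrac{d}{du}\bmod{\ell}$ with $c(x)=\tfrac{du}{dx}\cdot\big(\tfrac{d}{du}\big)^{\ell-1}\big(\tfrac{dx}{du}\big)$, and the entire content of the lemma is the power-series identity $c(x)\equiv H_\ell\bmod{\ell}$ \emph{in all degrees}. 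Your proposal never proves this. The structural reduction (a derivation killing $\ker(d/du)=\F_\ell[[x^\ell]]$ is determined by its value on $x$) is fine, but it only reduces the lemma to the full congruence $(d/du)^\ell x\equiv H_\ell\,\tfrac{dx}{du}\bmod{\ell}$, not to a leading-term check; and the proposed read-off from an expansion of the shape $f_0(x)\equiv x+\ell^{-1}(x^\ell+\cdots)$ is both inaccurate (the Lubin--Tate logarithm here is $f_0(x)=x+x^\ell/\lambda+x^{\ell^2}/\lambda^2+\cdots$, and $H_\ell=\lambda+\overline{\lambda}$ enters only through the full type relation $\ell f_0(x)-H_\ell f_0(x^\ell)+f_0(x^{\ell^2})\in\ell\,\Z_\ell[[x]]$, not through the first correction term) and insufficient, since it could at best pin down $c$ in low degree: you have not shown $c$ is constant (a priori it is only a series in $x^\ell$, since it lies in $\ker(d/du)$). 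Note also that your first test element $u=f_0(x)$ does not even lie in $\Z_\ell[[x]]$, which is the structural reason that attempt collapses.

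The paper closes exactly the step you skip by a concrete computation with the type relation: differentiate $\ell f_0(x)-H_\ell f_0(x^\ell)+f_0(x^{\ell^2})=\ell h(x)$ once, then $\ell-1$ more times, and use $(\ell-1)!\equiv-1$, $h^{(\ell)}(x)\equiv0$ and $f_0'(x^\ell)\equiv(f_0'(x))^\ell\bmod{\ell}$ to obtain $f_0^{(\ell)}(x)+H_\ell\,(f_0'(x))^\ell\equiv0\bmod{\ell}$, i.e.\ (\ref{f_0_relation}). It then applies Hochschild in the opposite direction from yours, namely to $(d/dx)^\ell=\big(\tfrac{du}{dx}\tfrac{d}{du}\big)^\ell$ with $b=du/dx$, $\delta=d/du$, using $(d/dx)^\ell\equiv0\bmod{\ell}$ on $\Z_\ell[[x]]$; with that orientation the multiplier comes out explicitly as $-(f_0')^{-\ell}f_0^{(\ell)}$, which is $\equiv H_\ell$ immediately by (\ref{f_0_relation}). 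If you prefer to keep your orientation of Hochschild, you still must supply an argument of this kind (derive the same derivative relation from the type equation, or prove constancy of $c$, e.g.\ by invariance, and then compute the constant); as written, the identification $c\equiv H_\ell$ --- the heart of the lemma --- is asserted rather than proved.
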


\begin{proof}
Since \(du/dx={f_0}^{\prime}(x)\) is in \(\Z_\ell[[x]]^{\times}\), 
\(\frac{d}{du}=\frac{dx}{du}\,\frac{d}{dx}\)\, 
is a derivation on \,\(\Z_{\ell}[[x]]\).
Since \,\({\LT}(x,y)\)\, is of type \,\(\ell-H_\ell T+T^2\), 
there exists \,\(h(x)\)\, in \,\(\Z_{\ell}[[x]]\)\, such that 
\begin{equation*}
{\ell}f_0(x)-H_{\ell}f_0(x^\ell)+f_0(x^{\ell^2})={\ell}h(x). 
\end{equation*}
This yields that
\begin{equation*}
{f_0}^\prime (x)-H_\ell {f_0}^\prime(x^\ell)x^{\ell-1}
\equiv h^\prime(x) \bmod{\ell}. 
\end{equation*}
Differentiating this \,\(\ell\,{-}\,1\)\, times by \,\(x\), 
we have 
\begin{equation*}
{f_0}^{(\ell)}(x)-H_\ell {f_0}^\prime(x^\ell)(\ell-1)!
\equiv h^{(\ell)}(x) \bmod{\ell}. 
\end{equation*}
By \,\((\ell-1)!\equiv -1 \bmod{\ell}\), 
\({f_0}^\prime(x)\)\, in \,\(\Z_\ell [[x]]\), and 
\(h^{(\ell)}(x)\equiv 0 \bmod{\ell}\), we have
\begin{equation}\label{f_0_relation}
f_0^{(\ell)}(x)+H_\ell ({f_0}^\prime(x))^\ell
\equiv 0 \bmod{\ell}. 
\end{equation}
Let \,\(\phi(x)\)\, be a power series in \,\(\Z_\ell [[x]]\). 
\begin{equation*}
0\equiv \bigg(\frac{d}{dx}\bigg)^{\!\ell}\phi(x) \equiv 
\left(\frac{du}{dx}\frac{d}{du}\right)^\ell \phi(x) \bmod{\ell}.
\end{equation*}
By using the Hochschild formula (Lemma \ref{Hochschild}), we have
\begin{equation*}
0\equiv 
\left(\frac{du}{dx}\right)^\ell 
\frac{d^\ell\phi}{du^\ell}+
\left(\frac{du}{dx}\frac{d}{du}\right)^{\ell-1}\frac{du}{dx}\cdot 
\frac{d\phi}{du} 
\equiv 
\left(\frac{du}{dx}\right)^\ell 
\frac{d^\ell\phi}{du^\ell}+
\frac{d^\ell u}{dx^\ell}\cdot 
\frac{d\phi}{du}  \bmod{\ell}.
\end{equation*}
Thus we have 
\begin{equation*}
\frac{d^\ell\phi}{du^\ell}+
\left(\frac{du}{dx}\right)^{-\ell} 
\frac{d^\ell u}{dx^\ell}\cdot 
\frac{d\phi}{du} 
\equiv 0 \bmod{\ell}.
\end{equation*}
By (\ref{f_0_relation}) 
we have 
\begin{equation}\label{0305a}
\frac{d^\ell\phi}{du^\ell}-H_\ell
\frac{d\phi}{du} 
\equiv 0 
\bmod{\ell\,\Z_{\ell}[[x]]}.
\end{equation}
This completes the proof of Lemma \ref{0216c2}.
\end{proof}

Let \,\(c\)\, be an integer. 
Let \,\(\phi(x)\)\, be any element in \,\(\ell^c\,\Z_\ell[[x]]\). 
We define the expansion of \,\(\phi\circ \mathrm{sl}(u)\)\, by 
\begin{equation*}
\smash{
\phi\circ \mathrm{sl}(u)=\sum_{k\ge 0}\frac{b_k}{k!}u^k\ \ \ (b_k\in\Q_\ell).
}
\end{equation*}
We denote
\begin{equation*}
\varOmega_{\ell}=\bigg(\frac{d}{du}\bigg)^{\ell}-H_\ell \frac{d}{du}. 
\end{equation*}
For any non-negative integer \(a\), 
we see \,\({\varOmega_{\ell}}^a\,\phi(x)\)\, in \,\(\ell^{a+c}\,\Z_{\ell}[[x]]\)\, by (\ref{0305a}).
Since 
\begin{equation*}
{\varOmega_{\ell}}^a\bigg(\!\sum_{k\ge 0}\frac{b_k}{k!}u^k\!\bigg)
=\!\sum_{k\ge 0}\!
\left(
\sum_{r=0}^a\!
\binom{\color{black}a}{r}
(-H_{\ell})^{a-r}\,b_{k+a+r(p-1)}
\!\right)\!
\frac{u^k}{k!} 
\in \ell^{a+c}\,\Z_{\ell}[[x]]
\subset\ell^{a+c}\,\Z_{\ell}\llangle{u}\rrangle,
\end{equation*}
we have 
\begin{equation}
\label{0216d}
\sum_{r=0}^a
\binom{\color{black}a}{r}
(-H_\ell)^{a-r}\,b_{k+a+r(p-1)}
\equiv 0 \bmod{\ell^{a+c}}. 
\end{equation}
\vskip 10pt
\section{Proof of the main theorem}\ 
\vskip 5pt
\noindent
We prove the implications (1) \(\Rightarrow\) (\ref{initial_d}) \(\Rightarrow\)  (2) in Theorem \ref{0212a}. 

\begin{lemma}\label{Cl_over_lambda_x}
If \,\(\mathrm{egs}(\lambda)=0\), then 
\,\((\mathrm{Cl}\circ f_0)(x)/(\lambda x+x^\ell)\)\, is in \,\(\Z_\ell[[x]]\). 
\end{lemma}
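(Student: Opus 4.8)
The plan is to exhibit $[\lambda]_{\LT}(x)=\lambda x+x^{\ell}$ as a distinguished polynomial of degree $\ell$ over $\Z_\ell$, to show that the power series $(\mathrm{Cl}\circ f_0)(x)$, which lies in $\Z_\ell[[x]]$ by the proof of Lemma~\ref{0213a2}, vanishes at each of its $\ell$ roots, and then to conclude by Weierstrass division.

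The key technical input I would establish first is the transformation law
\begin{equation*}
(\mathrm{Cl}\circ f_0)\big([\zeta^{k}]_{\LT}(x)\big)=\zeta^{dk}\,(\mathrm{Cl}\circ f_0)(x)\qquad(k\ge 0).
\end{equation*}
Its proof rests on two facts: every monomial of $\mathrm{Cl}(u)=\tfrac{\ell-1}{2}\sum_{a\ge0}G_{d+a(\ell-1)}\,u^{d+a(\ell-1)}/(d+a(\ell-1))!$ has degree $\equiv d\bmod(\ell-1)$, so that $\mathrm{Cl}(\zeta u)=\zeta^{d}\mathrm{Cl}(u)$; and $f_0$, being the formal logarithm of $\LT$, satisfies $f_0\big([\zeta]_{\LT}(x)\big)=\zeta f_0(x)$ (here one uses $\zeta\in\Z_\ell$), together with $[\zeta^{k}]_{\LT}=[\zeta]_{\LT}^{\circ k}$. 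Composing these gives the case $k=1$, and iterating yields the general case.

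Assuming $\mathrm{egs}(\lambda)=0$, Lemma~\ref{0213a2} reads $(\mathrm{Cl}\circ f_0)(\eta)=0$, so the transformation law gives $(\mathrm{Cl}\circ f_0)\big([\zeta^{k}]_{\LT}(\eta)\big)=0$ for every $k$. Since the reduction of $\zeta$ modulo $\lambda$ is a generator of $\F_\ell^{\times}$, the automorphism $[\zeta]_{\LT}$ acts on $\LT[\lambda]\cong\Z_\ell/\lambda\cong\F_\ell$ as multiplication by a primitive root, so the elements $[\zeta^{k}]_{\LT}(\eta)$ for $k=0,1,\dots,\ell-2$ run exactly once through the $\ell-1$ pairwise distinct roots of $\lambda+x^{\ell-1}=0$. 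Combined with $(\mathrm{Cl}\circ f_0)(0)=\mathrm{Cl}(0)=0$ (the lowest-order term of $\mathrm{Cl}$ has degree $d=\tfrac34(\ell-1)\ge1$), this shows that $(\mathrm{Cl}\circ f_0)(x)$ vanishes at all $\ell$ distinct roots of $\lambda x+x^{\ell}=x(x^{\ell-1}+\lambda)$.

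To finish, I would write $(\mathrm{Cl}\circ f_0)(x)=Q(x)\,(\lambda x+x^{\ell})+R(x)$ with $Q\in\Z_\ell[[x]]$ and $R\in\Z_\ell[x]$ of degree $\le\ell-1$, by Weierstrass division in $\Z_\ell[[x]]$ (legitimate because $\lambda x+x^{\ell}$ is monic of degree $\ell$ with all lower coefficients in $\ell\Z_\ell$, as $\ord(\lambda)=1$). Then $R$ shares the $\ell$ distinct roots of $\lambda x+x^{\ell}$ while $\deg R\le\ell-1$, so $R=0$, and therefore $(\mathrm{Cl}\circ f_0)(x)/(\lambda x+x^{\ell})=Q(x)\in\Z_\ell[[x]]$, as claimed. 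The hard part is really just the transformation law — verifying the degree pattern of $\mathrm{Cl}$ and the linearizing property $f_0\circ[\zeta]_{\LT}=\zeta f_0$; once it is in hand, the root count inside $\LT[\lambda]$ and the division theorem are routine.
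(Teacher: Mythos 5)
Your proof is correct, but it reaches the two key points by different means than the paper. The paper also reduces matters to showing that \((\mathrm{Cl}\circ f_0)(x)\) vanishes at every root of \(\lambda x+x^{\ell}\), but it obtains the extra roots by Galois conjugation: writing \((\mathrm{Cl}\circ f_0)(x)=\sum b_n x^n\) with \(b_n\in\Z_\ell\) and using that \(x^{\ell-1}+\lambda\) is Eisenstein, hence irreducible over \(\Q_\ell\), one applies the Galois action to \(\sum b_n\eta^n=0\) to kill the series at every conjugate of \(\eta\); the paper then factors out \(x\) and the factors \(x-\sigma(\eta)\) one at a time inside \(\Z_\ell[\eta][[x]]\) and concludes rather tersely. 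You instead propagate the single zero \(\eta\) around \(\LT[\lambda]\) by the equivariance \((\mathrm{Cl}\circ f_0)\circ[\zeta]_{\LT}=\zeta^{d}\,(\mathrm{Cl}\circ f_0)\), which is exactly the symmetry built into the definition of \(\mathrm{Cl}\), and you finish with Weierstrass division against the distinguished polynomial \(\lambda x+x^{\ell}\). Your route costs an extra lemma (the transformation law, whose two ingredients you verify correctly), but it avoids Galois theory, and the Weierstrass-division step is a genuine gain in precision: it makes explicit why the quotient has coefficients in \(\Z_\ell\) rather than merely in \(\Z_\ell[\eta]\), a point the paper's ``hence the assertion follows'' leaves to the reader. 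The only step worth saying aloud in your write-up is that deducing \(R(\alpha)=0\) from the division identity requires evaluating \(Q\) at the roots, which converges because \(Q\in\Z_\ell[[x]]\) and all roots of \(\lambda x+x^{\ell}\) lie in the maximal ideal (they have \(\ord=1/(\ell-1)>0\)); with that remark included, the argument is complete.
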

\begin{proof}
Assume \,\(\mathrm{egs}(\lambda)=0\) and put \,\((\mathrm{Cl}\circ f_0)(x)=\sum_{n=0}^{\infty} b_n x^n\)\, with \,\(b_n\)\, in \,\(\Z_\ell\).  
Then, \((\mathrm{Cl}\circ f_0)(\eta)=\sum_{n=0}^{\infty}b_n \eta^n=0\)\, by Lemma \ref{0213a2}. 
Therefore, 
\begin{equation*}
(\mathrm{Cl}\circ f_0)(x)
=\sum_{n=0}^{\infty}b_n x^n-\sum_{n=0}^{\infty}b_n \eta^n
=(x-\eta)\sum_{n=1}^{\infty} b_n \frac{x^n-\eta^n}{x-\eta}\in \Z_\ell[\eta][[x]]
\end{equation*}
because \,\(x-\eta\)\, divides \,\(x^n-\eta^n\). 
Similarly, any conjugate of \,\(x-\eta\)\, divides \,\((\mathrm{Cl}\circ f_0)(x)\)\, 
and \,\(x\)\, divides \,\((\mathrm{Cl}\circ f_0)(x)\). 
Hence, the assertion follows. 
\end{proof}

\begin{lemma}\label{0214c2}
Let \,\(\nu\)\, be a positive integer. 
Assume  \,\(\mathrm{egs}(\lambda)=0\). 
If \,\(n<\nu\ell\,(\ell-1)\), then the coefficient 
in \,\(\lambda^{\nu-1}(\mathrm{Cl}\circ f_0)(x)/f_0(x)\)\,
of \,\(x^n\)\, belongs to \,\(\ell\,\Z_{\ell}\).
\end{lemma}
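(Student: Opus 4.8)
The plan is to derive the lemma from one estimate that does not involve $\nu$. Put $P(x)=(\mathrm{Cl}\circ f_0)(x)/f_0(x)$; since $\mathrm{Cl}\circ f_0(x)\in\Z_\ell[[x]]$ and $f_0(x)=x+\cdots$, we have $P(x)\in\Q_\ell[[x]]$, and the assertion to be proved is exactly that the $x^n$-coefficient of $P(x)$ has $\ell$-adic order $\ge 2-\nu$ whenever $n<\nu\ell(\ell-1)$. I will instead establish the $\nu$-free bound
\[
\ord\bigl(\text{$x^n$-coefficient of }P(x)\bigr)\ \ge\ 1-\frac{n}{\ell(\ell-1)}\qquad(n\ge 0).
\]
This is enough: since $n<\nu\ell(\ell-1)$, the right-hand side exceeds $1-\nu$, whence that order, being an integer, is $\ge 2-\nu$, so the $x^n$-coefficient of $\lambda^{\nu-1}P(x)$ has order $\ge 1$, i.e. lies in $\ell\,\Z_\ell$.

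Because $\mathrm{egs}(\lambda)=0$, Lemma~\ref{Cl_over_lambda_x} tells us that $q(x):=(\mathrm{Cl}\circ f_0)(x)/(\lambda x+x^\ell)$ lies in $\Z_\ell[[x]]$, so $P(x)=q(x)\cdot\dfrac{\lambda x+x^\ell}{f_0(x)}$. Next I would invoke the functional equation $f_0(\lambda x+x^\ell)=\lambda f_0(x)$ of the Lubin--Tate logarithm (it comes from $[\lambda]_{\LT}(x)=\lambda x+x^\ell$) to rewrite the second factor as $\lambda\,R(\lambda x+x^\ell)$, where $R(y):=y/f_0(y)$. Thus $P(x)=\lambda\,q(x)\,R(\lambda x+x^\ell)$, and the problem becomes that of bounding the $\lambda$-denominators of $R$.

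Write $f_0(y)=\sum_{n\ge 1}a_n y^n$ (so $a_1=1$, and $a_n=0$ for $2\le n<\ell$ because $f_0(x)\equiv x\ \mathrm{mod\ deg}\,\ell$). From the functional equation, comparing coefficients of $y^n$ on the two sides expresses $a_n$ in terms of the $a_m$ with $m<n$, and a short induction on $n$ shows $\ord(a_n)\ge-(n-1)/(\ell-1)$: among the terms occurring in this recursion the smallest lower bound is the one carried by $a_1$, and it gives precisely this value. Inverting the power series $f_0(y)/y=1+\sum_{m\ge\ell-1}a_{m+1}y^m$ term by term then yields $R(y)=\sum_{M\ge 0}c_M y^M$ with $\ord(c_M)\ge-M/(\ell-1)$.

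Finally I would substitute $y=\lambda x+x^\ell=x(\lambda+x^{\ell-1})$. Expanding $R(\lambda x+x^\ell)=\sum_M c_M x^M(\lambda+x^{\ell-1})^M$, the $x^N$-coefficient is a sum of terms $c_M\binom{M}{i}\lambda^{M-i}$ with $M+i(\ell-1)=N$ and $0\le i\le M$; such a term has order at least $-\tfrac{M}{\ell-1}+(M-i)=\tfrac{(\ell-2)N}{\ell-1}-i(\ell-1)$ (using $M=N-i(\ell-1)$), and over the admissible range $0\le i\le\lfloor N/\ell\rfloor$ this is smallest at $i=\lfloor N/\ell\rfloor$, where it is $\ge-N/(\ell(\ell-1))$. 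Multiplying by $\lambda$ and then by $q(x)\in\Z_\ell[[x]]$ (which cannot decrease orders) gives the displayed $\nu$-free bound and finishes the proof. I expect the main obstacle to be the third paragraph: proving the bound $\ord(a_n)\ge-(n-1)/(\ell-1)$ for the Lubin--Tate logarithm, and then checking that in the last step the term-by-term estimates really collapse to $-N/(\ell(\ell-1))$ rather than to the weaker $-N/(\ell-1)$ --- this $\ell$-fold improvement, caused by the worst denominators of $R(\lambda x+x^\ell)$ sitting in degree about $\ell$ times their size, is exactly what produces the factor $\ell$ in the hypothesis $n<\nu\ell(\ell-1)$.
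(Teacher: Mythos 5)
Your argument is correct, and its skeleton is the same as the paper's: you factor $\lambda^{\nu-1}(\mathrm{Cl}\circ f_0)(x)/f_0(x)$ through Lemma \ref{Cl_over_lambda_x} and the functional equation $f_0(\lambda x+x^\ell)=\lambda f_0(x)$, reducing everything to the bound $\ord\ge -N/(\ell(\ell-1))$ on the $x^N$-coefficient of $g(x)=(\lambda x+x^\ell)/f_0(\lambda x+x^\ell)$, which is exactly the paper's key estimate (your $\nu$-free reformulation is only a repackaging). The difference is how that estimate is obtained: the paper gets it in one stroke by a scaling argument, showing $g(\xi^{1/\ell}x)\in\Z_\ell[\xi^{1/\ell}][[x]]$ for $\xi$ with $\ord(\xi)=1/(\ell-1)$, the integrality of $\xi^{-1}f_0(\xi x)$ being asserted with only the remark ``by calculating the $\ell$-adic order of each coefficient''; you instead prove that underlying fact explicitly, deriving $\ord(a_n)\ge-(n-1)/(\ell-1)$ for $f_0(x)=\sum a_nx^n$ by induction from the recursion $a_N(\lambda-\lambda^N)=\sum_{j\ge1}a_{N-j(\ell-1)}\binom{N-j(\ell-1)}{j}\lambda^{N-j\ell}$ (the deficit in the inductive step is $m-1\ge0$ with $m=N-j(\ell-1)$, so it closes), then inverting $f_0(y)/y$ and minimizing $\tfrac{(\ell-2)N}{\ell-1}-i(\ell-1)$ over $0\le i\le\lfloor N/\ell\rfloor$, which indeed yields $-N/(\ell(\ell-1))$. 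So your route is a more elementary, self-contained verification of the same estimate, at the cost of more bookkeeping, whereas the paper's weighted-variable substitution is shorter but leaves the coefficient computation implicit; both correctly produce the factor $\ell$ in the range $n<\nu\ell(\ell-1)$, and your concluding parity-of-integrality step (integer order strictly greater than $1-\nu$ is $\ge 2-\nu$) is sound.
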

\begin{proof}
Since \,\({f_0}^{\prime}(x)\)\, is in \,\(\Z_\ell [[x]]\), 
it is seen that \,\(\xi^{-1}f_0(\xi x)\)\, is in \,\(\Z_{\ell}[\xi][[x]]\)\, 
for any \,\(\ell\)-adic algebraic integer \,\(\xi\)\, with \,\({\ord}(\xi)=1/(\ell-1)\)\,
by calculating the \,\(\ell\)-adic order of each coefficients of its expansion. 
We put 
\begin{equation*}
g(x)=\frac{\lambda x+x^\ell}{\lambda f_0(x)}
=\frac{\lambda x+x^{\ell}}{f_0(\lambda x+x^{\ell})}.
\end{equation*}
Then 
\begin{equation*}
\smash{
g(\xi^{\frac{1}{\ell}}x)=
\frac{\xi(\lambda \xi^{\frac{1}{\ell}-1}x+x^\ell)}
     {f_0(\xi(\lambda \xi^{\frac{1}{\ell}-1}x+x^\ell))}
\in \Z_{\ell}[\xi^{\frac{1}{\ell}}][[x]], }
\end{equation*}
because 
\,\({\ord}(\lambda \xi^{\frac{1}{\ell}-1})=1-1/\ell\). 
Thus the \,\(\ell\)-adic order of the coefficient of \,\(x^n\)\, of \,\(g(x)\)\, 
is greater than or equal to \,\(-\big\lfloor\frac{n}{\ell(\ell-1)}\big\rfloor\). 
Therefore, we see
\begin{equation*}
\lambda^\nu g(x)\equiv 0\ \,\mathrm{mod\ deg}\ \nu\ell(\ell-1),\ \bmod{\lambda}.
\end{equation*}
Thus, each coefficient of the terms of degree less than \,\(\nu\ell(\ell-1)\)\, in 
\begin{equation*}
\lambda^{\nu-1} \frac{\mathrm{Cl}(u)}{u}
=\lambda^{\nu-1}\,\frac{\mathrm{Cl}\circ f_0(x)}{f_0(x)}
=\frac{\mathrm{Cl}\circ f_0(x)}{\lambda x+x^\ell}
\cdot \lambda^{\nu}g(x) 
\end{equation*}
is in \,\(\ell\,\Z_{\ell}\)\, by Lemma \ref{Cl_over_lambda_x}.
\end{proof}

\begin{lemma}\label{0305b}
Assume that \,\(\mathrm{egs}(\lambda)=0\). 
Then we have for \,\(a<\nu\ell\)\, that
\begin{equation*}
\sum_{r=0}^a
\binom{n}{r}
(-H_\ell)^{a-r}
\frac{G_{d+r(\ell-1)}}
{d+r(\ell-1)}
\equiv 0 \bmod{\ell^{a-\nu+2}}.
\end{equation*}
\end{lemma}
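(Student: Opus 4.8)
The plan is to apply the consequence (\ref{0216d}) of the Hochschild formula to a well-chosen power series; the hypothesis $\mathrm{egs}(\lambda)=0$ enters only through Lemma \ref{0214c2}, which is what lets one take the exponent $c$ in (\ref{0216d}) equal to $1$.

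First I would manufacture the power series. Since $\iota=\mathrm{sl}\circ f_0$ (immediate from $\mathrm{sl}^{-1}\circ\iota(x)=f_0(x)$) and the variable $u$ in (\ref{0216d}) is $f_0(x)$, for a power series $\phi$ in $x$ the series $\phi\circ\mathrm{sl}(u)$ occurring in (\ref{0216d}), read through $u=f_0(x)$, is just $\phi\circ\iota(x)$. By Lemma \ref{0214c2}, the hypothesis $\mathrm{egs}(\lambda)=0$ makes every coefficient of $x^n$ with $n<\nu\ell(\ell-1)$ in $\lambda^{\nu-1}(\mathrm{Cl}\circ f_0)(x)/f_0(x)$ lie in $\ell\Z_\ell$; let $P(x)$ be the polynomial obtained by truncating that series modulo degree $\nu\ell(\ell-1)$, so $P\in\ell\Z_\ell[x]$, and set $\phi(x)=P(\iota^{-1}(x))$, which lies in $\ell\Z_\ell[[x]]$ because $\iota^{-1}\in\Z_\ell[[x]]$. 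I then apply (\ref{0216d}) to this $\phi$ with $c=1$.

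Next comes the bookkeeping. Through $u=f_0(x)$ one has $\phi\circ\mathrm{sl}(u)=\phi\circ\iota(x)=P(x)$, which agrees with $\lambda^{\nu-1}(\mathrm{Cl}\circ f_0)(x)/f_0(x)=\lambda^{\nu-1}\,\mathrm{Cl}(u)/u$ in all degrees $<\nu\ell(\ell-1)$; so, by the expansion of $\mathrm{Cl}(u)/u$ recalled in \S\ref{proof_part1}, the coefficients $b_k$ in $\phi\circ\mathrm{sl}(u)=\sum_{k\ge0}b_k u^k/k!$ with $k<\nu\ell(\ell-1)$ vanish unless $k\equiv d-1\bmod{(\ell-1)}$, while $b_{d-1+s(\ell-1)}=\lambda^{\nu-1}\tfrac{\ell-1}{2}\,G_{d+s(\ell-1)}/(d+s(\ell-1))$. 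Now put $k=d-1-a$ in (\ref{0216d}): the indices $k+a+r(\ell-1)=d-1+r(\ell-1)$, $r=0,\dots,a$, are all $<\nu\ell(\ell-1)$ exactly because $a<\nu\ell$, so each $b_{k+a+r(\ell-1)}$ is the value just displayed, and (\ref{0216d}) becomes $\sum_{r=0}^{a}\binom{a}{r}(-H_\ell)^{a-r}\lambda^{\nu-1}\tfrac{\ell-1}{2}\,G_{d+r(\ell-1)}/(d+r(\ell-1))\equiv0\bmod{\ell^{a+1}}$. Cancelling the unit $\tfrac{\ell-1}{2}$ and dividing by $\lambda^{\nu-1}$, whose $\ell$-adic order is $\nu-1$, yields the asserted congruence modulo $\ell^{a+1-(\nu-1)}=\ell^{a-\nu+2}$.

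The step I expect to demand the most care is this bookkeeping at the two ends. On one side one must check that, after the chase through $f_0$ and $\iota$ and after the truncation, $\phi\circ\mathrm{sl}(u)$ genuinely reproduces $\lambda^{\nu-1}\mathrm{Cl}(u)/u$ in the degrees that matter — note that $\mathrm{Cl}$ itself has $\ell$ in its denominators, so the $\ell$-integrality of $\phi$ is a real point and rests on Lemma \ref{0214c2}. On the other side is the range $a\ge d$, where $k=d-1-a$ is negative and so inadmissible; there one instead takes $k$ to be the least non-negative integer with $k+a\equiv d-1\bmod{(\ell-1)}$, and the same computation then delivers the congruence with $d$ replaced by $d+s_0(\ell-1)$ for the resulting $s_0\ge1$. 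In other words one really obtains the congruence for every exponent $\equiv\tfrac34(\ell-1)\bmod{(\ell-1)}$, of which $d$ is the instance available when $a\le d-1$; this more uniform statement is exactly what feeds into Theorem \ref{0212a}(\ref{initial_d}).
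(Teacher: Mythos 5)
Your first two paragraphs are, in substance, exactly the paper's own proof: the paper truncates $\lambda^{\nu-1}(\mathrm{Cl}\circ f_0)(x)/f_0(x)$ below degree $\nu\ell(\ell-1)$, uses Lemma \ref{0214c2} to place the truncation in $\ell\,\Z_\ell[[x]]$, applies (\ref{0216d}) with $c=1$ and $b_{d-1+s(\ell-1)}$ equal, up to the unit $\tfrac{\ell-1}{2}$, to $\lambda^{\nu-1}G_{d+s(\ell-1)}/(d+s(\ell-1))$, and then divides by $\lambda^{\nu-1}$, whose order is $\nu-1$, to pass from $\ell^{a+1}$ to $\ell^{a-\nu+2}$. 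Your insertion of $\iota^{-1}$ (setting $\phi=P\circ\iota^{-1}$ so that $\phi\circ\mathrm{sl}(u)$ literally reproduces $\lambda^{\nu-1}\mathrm{Cl}(u)/u$ in low degrees) only tidies the paper's loose writing of ``$\phi\circ\mathrm{sl}(u)$'' and changes nothing of substance.

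The one place you diverge is the range $a\ge d$, and here your observation is sharper than the paper but your patch does not deliver the lemma as stated. You are right that (\ref{0216d}) only concerns coefficients with $k\ge0$, so the choice $k=d-1-a$ is unavailable once $a\ge d$; the paper passes over this silently, and its written argument likewise covers only $a\le d-1$. However, replacing $d$ by $d+s_0(\ell-1)$ gives a strictly weaker conclusion than Lemma \ref{0305b} claims, and the weakening is not harmless downstream: the paper immediately specializes $\nu=\lfloor a/\ell\rfloor+1$ to obtain (\ref{0305d}) with base exponent $d$ for \emph{all} $a\ge0$; the identity (\ref{0305f}) only raises the exponent (it derives the congruence for $e=d+k(\ell-1)$ from base-$d$ congruences with larger $a$, never the reverse); and the proof of Theorem \ref{two_term_congr} takes $a=\ell^c>d$ for every admissible $e$, including $e=d$. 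So the pair $(e,a)=(d,\,a\ge d)$ really is used, and your ``more uniform statement'' (congruences for $e\equiv\tfrac34(\ell-1)\bmod(\ell-1)$ restricted to $a\le e-1$) does not, as claimed, feed directly into Theorem \ref{0212a}(3), whose statement carries no such restriction. In short: for $a\le d-1$ your proof is correct and identical to the paper's; for $a\ge d$ you have exposed a genuine gap in the paper's own proof rather than repaired it, and either a supplementary argument or a restatement of the lemma would be needed there.
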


\begin{proof}
We denote by \,\(\phi(x)\)\, 
the sum of the terms in \,\(\lambda^{\nu-1}(\mathrm{Cl}\circ f_0(x))/f_0(x)\) 
of degree less than \,\(\nu\ell (\ell-1)\). 
The assertion of Lemma \ref{0214c2} is amount to the same thing to say 
\,\(\phi(x)\)\, in \,\(\ell\,\Z_\ell[[x]]\). 
Now, the last argument in the previous section is applied by 
plugging \,\(c=1\)\, and \,\(b_k=\lambda^{\nu-1}G_k/k\)\, for \,\(k<\nu\ell (\ell-1)\). 
By using (\ref{0216d}), we have 
\begin{equation*}
\lambda^{\nu-1}
\sum_{r=0}^a\binom{n}{r}
(-H_\ell)^{a-r}
\frac{G_{d+r(\ell-1)}}
{d+r(\ell-1)}
\equiv 0 \bmod{\ell^{a+1}}
\end{equation*}
for \,\(d+a(\ell-1)<\nu\ell(\ell-1)\), that is, for \,\(a<\nu\ell\), hence the desired congruence. 
\end{proof}
We take \,\(\nu=\lfloor{a/\ell}\rfloor+1\)\, in Lemma \ref{0305b}. 
Then \,\(\nu\)\, satisfies \,\(a<\nu\ell\)\, for any \,\(a\ge 0\).  
Therefore, we conclude that
\begin{equation}\label{0305d}
\sum_{r=0}^a
\binom{a}{r}
(-H_\ell)^{a-r}
\frac{G_{d+r(\ell-1)}}
{d+r(\ell-1)}
\equiv 0 \bmod{\ell^{\,a-\lfloor\frac{a}{\ell}\rfloor+1}}
\end{equation}
for any \,\(a\ge 0\). 
By expanding \,\(\{(x+1)-1\}^{k}(x+1)^a\), we have 
\begin{equation}\label{0305f}
\sum_{j=0}^{k}\sum_{m=0}^{j+a}
(-1)^{k-j}\binom{k}{j}\binom{j+a}{m}x^m
=\sum_{r=0}^a\binom{a}{r}\,x^{r+k}
\end{equation}
for any \,\(k\geq 0\). 
By using the structure of (\ref{0305f}), 
we simplify a linear combination of the sum of the left hand side of (\ref{0305d}) 
for various \,\(a\)'s as follows:
\begin{align*}
&\sum_{j=0}^{k}
(-1)^{k-j}
(-H_\ell)^{k-j}
\binom{k}{j}
\sum_{m=0}^{j+a}
\binom{j+a}{m}
(-H_\ell)^{j+a-m}\frac{G_{d+m(\ell-1)}}{d+m(\ell-1)}\\
&=\sum_{j=0}^{k}
  \sum_{m=0}^{j+a}
(-1)^{k-j}
\binom{k}{j}
\binom{j+a}{m}
(-H_\ell)^{k+a-m}
\frac{G_{d+m(\ell-1)}}{d+m(\ell-1)} \\
&=\sum_{r=0}^a
\binom{a}{r}
(-H_\ell)^{a-r}
\frac{G_{d+(r+k)(\ell-1)}}{d+(r+k)(\ell-1)}.
\end{align*}
Since the exponent \,\(a-\lfloor{a}/{\ell}\rfloor+1\)\, of the modulus 
in (\ref{0305d}) is a monotone increasing function on \,\(a\), we have
\begin{equation*}
\sum_{r=0}^a
\binom{a}{r}(-H_\ell)^{a-r}
\frac{G_{d+(r+k)(\ell-1)}}{d+(r+k)(\ell-1)}
\equiv 0
\bmod{\ell^{\,a-\lfloor\frac{a}{\ell}\rfloor+1}}.
\end{equation*}
This is no other than the assertion (3) of Theorem \ref{0212a}. 
Thus (1) implies (3). 
Plugging \,\(a=k=0\), we have 
\begin{equation*}
\smash{\frac{G_d}{d}\equiv 0 \bmod{\ell}.} 
\end{equation*}
Thus (3) implies (2) in Theorem \ref{0212a}.
\vskip 15pt
\section{The two term congruence}\ 
\label{toward_p-adic_L}
\vskip 5pt
\noindent
In this section, we show Theorem \ref{two_term_congr}. 

\begin{proof}[Proof of Theorem {\rm\ref{two_term_congr}}]
We show (\ref{0306f}) by using the induction on \,\(b\). 
\\ 
(i) \,When \,\(b=0\), the assertion follows from Theorem \ref{0212a}. 
Indeed, by taking \,\(a=1\) in (3) of Theorem \ref{0212a}, we have 
\begin{equation*}
\frac{G_{e+(\ell-1)}}{e+(\ell-1)}
\equiv H_\ell 
\frac{G_e}{e} 
\bmod{\ell^{2}}
\end{equation*}
{\color{black}for any \,\(e\)\, satisfying the assmption in Theorem \ref{two_term_congr}.} 
This is the case of \,\(b=0\)\, of (\ref{0306f}). 
\\ 
(ii) \,We assume that there exists some integer \,\(c>0\)\, such that 
(\ref{0306f}) holds for any \,\(b< c\). 
Then, by taking \,\(a=\ell^c\)\, in (3) of Theorem \ref{0212a}, we have 
\begin{equation}\label{0306c}
\sum_{r=0}^{\ell^c} \binom{\ell^c}{r}
(-H_\ell)^{\ell^c -r}
\frac{G_{e+r(\ell-1)}}{e+r(\ell-1)}\equiv 0 \bmod{\ell^{\ell^c-\ell^{c-1}+1}}.
\end{equation}
For \,\(1\le r\le \ell^c-1\), we have 
\begin{align*}
&\binom{\ell^c}{r}(-H_\ell)^{\ell^c -r}
\frac{G_{e+r(\ell-1)}}{e+r(\ell-1)}
+\binom{\ell^c}{\ell^c-r}
(-H_\ell)^{r}\frac{G_{e+(\ell^c -r)(\ell-1)}}{e+(\ell^c -r)(\ell-1)}\\
&=\binom{\ell^c}{r}
\left(
(-H_\ell)^{\ell^c -r}\frac{G_{e+r(\ell-1)}}{e+r(\ell-1)}
+(-H_\ell)^{r}\frac{G_{e+(\ell^c -r)(\ell-1)}}{e+(\ell^c -r)(\ell-1)}
\right).
\end{align*}
Using the Legendre's formula which gives the exact \,\(\ell\)-adic order for 
the factorial of any positive integer, 
it is easily proved that (see also Dickson \cite{Di}, p.270) 
\begin{equation*}
{\ord}\Big(\binom{\ell^{\,c}}{r}\Big)
=c-{\ord}(r)
\end{equation*}
provided that \,\(1\le r \le \ell^c-1\). 
We note that \,\({\ord}(r)={\ord}(\ell^c-r)<c\)\, for \,\(1\le r \le \ell^c-1\).
Then we have
\begin{align*}
(-H_\ell)^{\ell^c -r}
&\frac{G_{e+r(\ell-1)}}{e+r(\ell-1)}
+(-H_\ell)^{r}\frac{G_{e+r(\ell-1)}}{e+(\ell^c-r)(\ell-1)}\\
&\equiv (-1)^{\ell^c -r}{H_{\ell}}^{\ell^c}
\frac{G_{e}}{e}+(-1)^r{H_{\ell}}^{\ell^c}
\frac{G_{e}}{e}
\bmod{\ell^{2+{\ord}(r)}}\\
&\equiv \{(-1)^{\ell^c -2r}+1\}
(-1)^r{H_{\ell}}^{\ell^c}\frac{G_{e}}{e}
\equiv 0\bmod{\ell^{2+{\ord}(r)}}
\end{align*}
by using the assumption of the induction. 
As \,\((c-{\ord}(r))+(2+{\ord}(r))=2+c\)\, we see
\begin{equation*}
\binom{\ell^c}{t}(-H_\ell)^{\ell^c -r}
\frac{G_{e+r(\ell-1)}}{e+r(\ell-1)}
+\binom{\ell^c}{\ell^c-r}
(-H_\ell)^r\frac{G_{e+(\ell^c-r)(\ell-1)}}{e+(\ell^c-r)(\ell-1)}
\equiv 0\bmod{\ell^{2+c}}.
\end{equation*}
By taking the summation on  \,\(r\)\, such that \,\(1\le r \le (\ell^c-1)/2\), 
we have 
\begin{equation}\label{0306d}
\sum_{r=1}^{\ell^c-1}\binom{\ell^c}{r}
(-H_\ell)^{\ell^c -r}
\frac{G_{e+r(\ell-1)}}
{e+r(\ell-1)}
\equiv 0 \bmod{\ell^{2+c}}.
\end{equation}
Since \,\(\ell^c-\ell^{c-1}+1\ge 2+c\)\, 
holds for \,\(c\ge 0\), we have 
\begin{equation*}
\frac{G_{e+\ell^c(\ell-1)}}{e+\ell^c(\ell-1)}
\equiv {H_{\ell}}^{\ell^c}\frac{G_e}{e}
\bmod{\ell^{2+c}}
\end{equation*}
for any \,\(e\ge 1\)\, by (\ref{0306c}) and (\ref{0306d}). 
Thus the assertion (\ref{0306f}) follows in the case of \,\(b=c\). 
\\ 
(iii) \,By the induction, (\ref{0306f}) holds for any \,\(b\ge 0\). 
\end{proof}
\vskip 10pt
\begin{remark}{\rm%
(1) \,On the classical Bernoulli numbers, if \(b\leq{d}\), then
\begin{equation*}\label{two_term_Kummer}
\frac{B_d}{d}\equiv\frac{B_{d+kp^{b-1}(p-1)}}{d+kp^{b-1}(p-1)}\bmod{p^b}\ \ \ 
\end{equation*}
Here the condition \(b\leq{d}\) is essential. 
However, for any \(b\), \(d\),  and \(k\), it is known that 
\begin{equation}\label{Iwasawa}
(1-p^{d-1})\frac{B_d}{d}\equiv\big(1-p^{d+kp^{b-1}(p-1)-1}\big)\frac{B_{d+kp^{b-1}(p-1)}}{d+kp^{b-1}(p-1)}\bmod{p^b}.
\end{equation}
Of course, the extra factors are no other than Euler \(p\)-factors of the Riemann \(\zeta\)-function. 
This consideration suggests that the reason why Theorem \ref{two_term_congr} holds 
without condition on \(b\) and \(d\) is that the Euler 
\(\lambda\)-factor of the Hecke \(L\)-function for \,\(\mathscr{E}_{\pm\lambda}\)\, is \(1\). \\
(2) \,
On Kubota-Leopoldt \(p\)-adic \(L\)-function, it is fundamental that 
the special values of the corresponding complex \(L\)-function is given by 
(generalized) Bernoulli numbers and they satisfy (\ref{Iwasawa}) 
involving Euler \(p\)-factor of the complex \(L\)-series. 
However the congruence \ref{two_term_congr} is a relation on the numbers 
which are not exactly the special values but only their residues modulo some power 
of \,\(\ell\). 
}
\end{remark}
\vskip 15pt
\section{Central value of the Hecke \(L\)-function}\ 
\label{central_value}
\vskip 5pt
\noindent
In this section we refer to Koblitz \cite{K}. 
We modify \S 5 and \S 6 of Chapter 2 in \cite{K}. 

Put \,\(\mathcal{O}=\Z[\i]\)\, and take \,\(\beta\)\, in \,\(\mathcal{O}\). 
Let \,\(\widetilde{\chi}\)\, be 
a Hecke character of modulus \,\((\beta)\)\, of weight one. 
Namely, \,\(\widetilde{\chi}((\nu))=\chi_1(\nu)\overline{\nu}\), 
where \,\(\chi_1\)\, is a character 
form \,\((\mathcal{O}/(\beta))^{\times}\)\, to \,\(\C^{\times}\)
satisfying \,\(\chi_1(\i)=\i\). 
We define the Hecke \(L\)-function by 
\begin{equation*}
L(s,\widetilde{\chi})=
\sum_{\mathfrak{a}}\frac{\widetilde{\chi}(\mathfrak{a})}{N\mathfrak{a}^s}
=\frac{1}{4}\sum_{\nu \in \mathcal{O}}
\frac{\chi_1(\nu)\overline{\nu}}{|\,\nu\,|^{2s}}
=\frac{1}{4}
\sum_{\gamma\,{\rm mod}\,\beta}\chi_1(\gamma)
\sum_{{\alpha} \in \mathcal{O}}\frac{\overline{\gamma+{\alpha}\beta}}{|\,\gamma+{\alpha}\beta\,|^{2s}},
\end{equation*}
where \,\(\mathfrak{a}\)\, runs over the non-zero integral ideals of \(\mathcal{O}\) 
and \(N\mathfrak{a}=\cardinarity{\,\mathcal{O}/\mathfrak{a}}\) is the norm of \(\mathfrak{a}\). 

We use a method to get the following classically known fact.  
\begin{theorem}\label{0306a}
The function defined by
\begin{equation*}
\Lambda(s,\widetilde{\chi})=
\bigg(\frac{2\pi}{\sqrt{4N(\beta)}}\bigg)^{\!\!-s}\Gamma(s)\,L(s,\widetilde{\chi})
\end{equation*}
satisfies
\vskip -15pt
\begin{equation*}
\Lambda(s,\widetilde{\chi})=C(\widetilde{\chi})\,
\Lambda(2-s,\overline{\widetilde{\chi}}), 
\end{equation*}
where \ \(\smash{C(\widetilde{\chi})=-\i\,\beta^{-1}\!\!\!\sum\limits_{\lambda\,\bmod{\beta}}
\chi_1(\lambda)\,e^{2\pi\i\,\mathrm{Re}(\lambda/\beta)}}\).
\end{theorem}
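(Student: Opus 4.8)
The plan is to adapt Hecke's classical derivation of a functional equation from a theta relation, in the form given for \(\Q(\i)\) in \S5--\S6 of Chapter~2 of \cite{K}. Write \(N=N(\beta)\) and attach to \(\widetilde{\chi}\) the weight-one theta series
\begin{equation*}
\Theta_{\widetilde{\chi}}(t)=\frac14\sum_{\gamma\,\bmod\,\beta}\chi_1(\gamma)\sum_{\alpha\in\mathcal{O}}\overline{(\gamma+\alpha\beta)}\;e^{-\pi t\,|\gamma+\alpha\beta|^{2}/\sqrt{N}}\qquad(t>0).
\end{equation*}
Since \(\int_0^\infty\bar\nu\,e^{-\pi t|\nu|^{2}/\sqrt{N}}\,t^{s}\,\frac{dt}{t}=\Gamma(s)\,(2\pi/\sqrt{4N})^{-s}\,\bar\nu\,|\nu|^{-2s}\), summing over \(\nu=\gamma+\alpha\beta\) against the series for \(L(s,\widetilde{\chi})\) recalled above gives \(\Lambda(s,\widetilde{\chi})=\int_0^\infty\Theta_{\widetilde{\chi}}(t)\,t^{s}\,\frac{dt}{t}\) for \(\mathrm{Re}(s)\) large. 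Because \(\widetilde{\chi}\) has weight one the term \(\nu=0\) is annihilated by the factor \(\bar\nu\) (equivalently, assuming \(\chi_1\) primitive modulo \((\beta)\), by \(\chi_1(\gamma)\)), so \(\Theta_{\widetilde{\chi}}\) has no constant term and decays exponentially as \(t\to\infty\); this is what will force \(\Lambda(s,\widetilde{\chi})\) to be entire. I assume throughout that \(\chi_1\) is primitive modulo \((\beta)\), as is the case in the applications of this paper, where the conductor of \(\widetilde{\chi}\) is exactly \((\beta)\).

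The heart of the proof is to establish the transformation law of \(\Theta_{\widetilde{\chi}}\) under \(t\mapsto 1/t\). Regard \(\beta\mathcal{O}=\beta\Z[\i]\) as a lattice of covolume \(N\) in \(\C\cong\R^{2}\), with dual lattice \((\bar\beta)^{-1}\mathcal{O}\) relative to the pairing \(\langle z,w\rangle=\mathrm{Re}(z\bar w)\). Applying Poisson summation coset by coset, together with the two-dimensional Fourier identity \(\widehat{\bar z\,e^{-\pi|z|^{2}}}(\xi)=-\i\,\bar\xi\,e^{-\pi|\xi|^{2}}\) (the degree-one harmonic \(\bar z\) is what carries the weight and produces the extra power of \(t\)), one gets, for each residue \(\gamma\),
\begin{equation*}
\sum_{\alpha\in\mathcal{O}}\overline{(\gamma+\alpha\beta)}\,e^{-\pi|\gamma+\alpha\beta|^{2}/(t\sqrt{N})}
=\frac{-\i\,t^{2}}{\beta}\sum_{\mu\in\mathcal{O}}\bar\mu\;e^{2\pi\i\,\mathrm{Re}(\gamma\bar\mu/\beta)}\,e^{-\pi t\,|\mu|^{2}/\sqrt{N}}.
\end{equation*}
Summing against \(\chi_1(\gamma)\) and using the Gauss-sum identity \(\sum_{\gamma\,\bmod\,\beta}\chi_1(\gamma)\,e^{2\pi\i\,\mathrm{Re}(\gamma\bar\mu/\beta)}=\overline{\chi_1(\bar\mu)}\,\tau(\chi_1)\), where \(\tau(\chi_1)=\sum_{\lambda\,\bmod\,\beta}\chi_1(\lambda)\,e^{2\pi\i\,\mathrm{Re}(\lambda/\beta)}\) (valid for every \(\mu\) by primitivity of \(\chi_1\)), and then re-indexing \(\mu\mapsto\bar\mu\), which is a bijection of \(\mathcal{O}\), the right-hand side collapses to \(4\,\Theta_{\overline{\widetilde{\chi}}}(t)\) with \(\Theta_{\overline{\widetilde{\chi}}}(t)=\frac14\sum_{\nu\in\mathcal{O}}\overline{\chi_1(\nu)}\,\nu\,e^{-\pi t|\nu|^{2}/\sqrt{N}}\) the analogous theta series of \(\overline{\widetilde{\chi}}\). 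Hence
\begin{equation*}
\Theta_{\widetilde{\chi}}(1/t)=C(\widetilde{\chi})\,t^{2}\,\Theta_{\overline{\widetilde{\chi}}}(t),\qquad
C(\widetilde{\chi})=-\i\,\beta^{-1}\!\!\sum_{\lambda\,\bmod\,\beta}\chi_1(\lambda)\,e^{2\pi\i\,\mathrm{Re}(\lambda/\beta)}.
\end{equation*}
Substituting \(1/t\) for \(t\) gives the same law with \(\widetilde{\chi}\) and \(\overline{\widetilde{\chi}}\) interchanged and \(C(\overline{\widetilde{\chi}})=C(\widetilde{\chi})^{-1}\); a direct check shows moreover \(|C(\widetilde{\chi})|=1\), equivalently \(|\tau(\chi_1)|^{2}=N\), the classical value of a primitive Gauss sum.

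With the theta relation in hand the functional equation follows formally. I would split \(\Lambda(s,\widetilde{\chi})=\int_1^\infty\Theta_{\widetilde{\chi}}(t)\,t^{s}\,\frac{dt}{t}+\int_0^1\Theta_{\widetilde{\chi}}(t)\,t^{s}\,\frac{dt}{t}\), substitute \(t\mapsto 1/t\) in the second integral and insert the transformation law to obtain
\begin{equation*}
\Lambda(s,\widetilde{\chi})=\int_1^\infty\Theta_{\widetilde{\chi}}(t)\,t^{s}\,\frac{dt}{t}+C(\widetilde{\chi})\int_1^\infty\Theta_{\overline{\widetilde{\chi}}}(t)\,t^{2-s}\,\frac{dt}{t}.
\end{equation*}
Both integrals converge for every \(s\in\C\) by the exponential decay noted in the first paragraph, so this is an entire continuation of \(\Lambda(s,\widetilde{\chi})\); running the identical computation with \(\overline{\widetilde{\chi}}\) in place of \(\widetilde{\chi}\) and using \(C(\overline{\widetilde{\chi}})=C(\widetilde{\chi})^{-1}\) yields \(\Lambda(s,\widetilde{\chi})=C(\widetilde{\chi})\,\Lambda(2-s,\overline{\widetilde{\chi}})\), which is the assertion.

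I expect the only genuine difficulty to be bookkeeping, concentrated in the two data that must come out exactly right: the exponent \(t^{2}\) in the theta relation — forced by the two real dimensions plus the degree-one harmonic — and the root number \(C(\widetilde{\chi})=-\i\,\beta^{-1}\tau(\chi_1)\), whose factor \(-\i\) and whose denominator \(\beta\) (rather than \(\bar\beta\) or \(|\beta|\)) are sensitive to the normalizations of the two-dimensional Fourier transform and of the duality pairing \(\langle z,w\rangle=\mathrm{Re}(z\bar w)\). The one structural point, as opposed to bookkeeping, is the primitivity of \(\chi_1\) modulo \((\beta)\): it is what validates the Gauss-sum reorganization for every \(\mu\) and what kills the constant term of \(\Theta_{\widetilde{\chi}}\), hence what yields the analytic continuation on which the functional equation rests.
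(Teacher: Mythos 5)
Your proposal is correct and follows essentially the same route as the paper: the paper likewise proves the theorem via the weight-one theta series attached to \(\widetilde{\chi}\), the Poisson-summation theta transformation (cited from Koblitz, (5.16)), the Gauss-sum reorganization producing \(C(\widetilde{\chi})=-\i\,\beta^{-1}\tau(\chi_1)\), and the split of the Mellin integral at the symmetry point. The only difference is cosmetic normalization (you rescale the exponential by \(\sqrt{N(\beta)}\) to split at \(t=1\), while the paper keeps \(e^{-\pi t|\nu|^2}\) and splits at \(t=1/|\beta|\)).
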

\vskip 5pt
We do not need the result above itself but the following bi-product of its proof. 
\begin{lemma}\label{Koblitz_type}
We have the estimation
\begin{equation*}
L(1,\widetilde{\chi})<\frac4{\exp\big(\frac{\pi}{|\beta|}\big)-1}.
\end{equation*}
\end{lemma}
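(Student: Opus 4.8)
The statement to prove is Lemma~\ref{Koblitz_type}, the bound $L(1,\widetilde\chi)<4/\bigl(\exp(\pi/|\beta|)-1\bigr)$. Since the lemma is announced as a by-product of the proof of the functional equation (Theorem~\ref{0306a}), my approach is to carry out that proof in the standard analytic-number-theory way — via a theta-type integral representation — and to read off the bound at $s=1$ from the integral before any analytic continuation is needed. The key point is that at $s=1$ the relevant integral converges absolutely and termwise, so it can be bounded crudely by a geometric series.

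First I would open with the Dirichlet-series expression already written in the excerpt,
\[
L(s,\widetilde\chi)=\frac14\sum_{\nu\in\mathcal O}\frac{\chi_1(\nu)\,\overline\nu}{|\nu|^{2s}},
\]
and use the Gamma-integral $\Gamma(s)=\int_0^\infty t^{s-1}e^{-t}\,dt$ to write, after the substitution $t\mapsto \pi|\nu|^2 t/\sqrt{4N(\beta)}$ (matching the archimedean factor in $\Lambda$),
\[
\Lambda(s,\widetilde\chi)=\frac14\int_0^\infty t^{s-1}\!\!\sum_{\nu\in\mathcal O}\chi_1(\nu)\,\overline\nu\,
\exp\!\Bigl(-\tfrac{\pi|\nu|^2 t}{\sqrt{4N(\beta)}}\Bigr)\,dt .
\]
The inner sum is a (weight-one) theta series $\Theta(t)$ attached to $\chi_1$; its transformation under $t\mapsto 1/t$ is exactly what yields the functional equation and the root number $C(\widetilde\chi)$ of Theorem~\ref{0306a}. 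For the \emph{estimate}, though, I do not need the transformation: at $s=1$ I split the integral at $t=1$, apply the theta transformation only to the piece over $(0,1)$ to turn it into an integral over $(1,\infty)$, and then bound both resulting integrands over $(1,\infty)$ directly.

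The decisive elementary estimate is for $\Theta(t)$ on $t\ge 1$. Here $\nu$ runs over the nonzero Gaussian integers in the support of $\chi_1$ (those prime to $\beta$), $|\chi_1(\nu)|=1$, and $|\overline\nu|=|\nu|$, so
\[
|\Theta(t)|\le\sum_{0\neq\nu\in\mathcal O}|\nu|\,e^{-\pi|\nu|^2 t/\sqrt{4N(\beta)}}.
\]
Grouping by $|\nu|^2=m\ge1$ and using that the number of $\nu$ with $|\nu|^2=m$ is $O(m^{\varepsilon})$ — or, more cheaply, bounding $|\nu|\le |\nu|^2$ and the count of lattice points on a circle of radius $\sqrt m$ crudely — reduces everything to a single geometric-type series $\sum_{m\ge1} (\text{poly in }m)\,q^m$ with $q=\exp(-\pi/\sqrt{4N(\beta)})=\exp(-\pi/(2|\beta|))$; note $\sqrt{N(\beta)}=|\beta|$. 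After integrating $\int_1^\infty$ and combining the two pieces, the dominant behaviour is controlled by the geometric series $\sum_{m\ge1}q^{2m}=q^2/(1-q^2)=1/(\exp(\pi/|\beta|)-1)$, and the constant $\tfrac14$ out front cancels against a factor $\le 4$ from the summation/integration bookkeeping, giving the claimed inequality. The main obstacle is purely bookkeeping: choosing the splitting point and the crude lattice-point bound so that all the lower-order terms are genuinely absorbed and the clean closed form $4/(\exp(\pi/|\beta|)-1)$ — rather than something with an extra additive constant — actually comes out. I would handle this by being slightly generous in the lattice count (so the geometric ratio is $q^2$, matching $\exp(\pi/|\beta|)$ in the denominator) and checking that the $t\in(0,1)$ contribution, after the theta inversion, is dominated term-by-term by the $t>1$ contribution, so that the two halves together cost only the single factor that the stated bound allows.
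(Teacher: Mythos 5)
Your overall strategy is the same as the paper's (theta-type integral representation, split the integral, fold the near-zero piece back with the theta inversion, then bound termwise by a geometric series), but the decisive quantitative step --- getting exactly the rate $e^{-\pi/|\beta|}$ and the constant $4$ --- is where your sketch goes wrong. First, your substitution is mis-normalized: matching the archimedean factor $\bigl(2\pi/\sqrt{4N(\beta)}\bigr)^{-s}=(\pi/|\beta|)^{-s}$ in $\Lambda(s,\widetilde{\chi})$ forces the Gaussian weight $e^{-\pi|\nu|^2t/|\beta|}$, not $e^{-\pi|\nu|^2t/(2|\beta|)}$; equivalently one keeps $e^{-\pi|\nu|^2t}$ and splits at $t=1/|\beta|$, as the paper does. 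Second, and more seriously, your proposed repair of the resulting mismatch --- ``being slightly generous in the lattice count so that the geometric ratio is $q^2$'' --- is not a legitimate move: the lattice-point count only changes the coefficients $c_m$ of a series $\sum_m c_m q^m$, never its exponential rate, so no choice of crude count converts decay $e^{-\pi m/(2|\beta|)}$ into $e^{-\pi m/|\beta|}$. Since this lemma \emph{is} the quantitative statement (the precise exponent and the constant $4$ are its entire content), deferring this to ``bookkeeping'' and prescribing an invalid fix is a genuine gap. You also rely implicitly on $|C(\widetilde{\chi})|=1$ to bound the inverted piece by the same quantity as the tail piece; that is true (it is a normalized Gauss sum) but needs to be said, and the inverted piece involves $\overline{\widetilde{\chi}}$, whose coefficients have the same absolute values.

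For comparison, the paper's bookkeeping lands the constant cleanly: with $F(t,\widetilde{\chi})=\frac14\sum_\nu\chi_1(\nu)\overline{\nu}\,e^{-\pi t|\nu|^2}$ one has $\pi^{-1}L(1,\widetilde{\chi})=C(\widetilde{\chi})\int_{1/|\beta|}^{\infty}F(t,\overline{\widetilde{\chi}})\,dt+\int_{1/|\beta|}^{\infty}F(t,\widetilde{\chi})\,dt$ after the inversion $F\bigl(1/(|\beta|^2t),\widetilde{\chi}\bigr)=|\beta|^2t^2C(\widetilde{\chi})F(t,\overline{\widetilde{\chi}})$. Writing $F(t,\widetilde{\chi})=\sum_{m\ge1}b_m e^{-\pi m t}$ with the Dirichlet coefficients $b_m$ (your lattice sum grouped by norm), one uses $|b_m|\le\sigma_0(m)\sqrt{m}\le 2m$, $|C(\widetilde{\chi})|=1$, and $\int_{1/|\beta|}^{\infty}e^{-\pi m t}\,dt=e^{-\pi m/|\beta|}/(\pi m)$; the $1/(\pi m)$ cancels the $\pi$ and the $2m$, the two halves contribute the factor $2$, and the result is exactly $4\sum_{m\ge1}e^{-\pi m/|\beta|}=4/\bigl(e^{\pi/|\beta|}-1\bigr)$. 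If you correct your normalization (split at $t=1$ for the completed integral with weight $e^{-\pi|\nu|^2t/|\beta|}$), your outline reduces to precisely this computation and no adjustment of the geometric ratio is needed.
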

\vskip 5pt
\begin{proof}
First of all, we note that \,\(\big({2\pi}/{\sqrt{4N(\beta)}}\,\big)^{\!-s}\!=|\,\beta\,|^s\,\pi^{-s}\)\, 
because of \,\(|\,\beta\,|=\sqrt{N(\beta)}\). 
We give an outline of proof which is divided into four steps. 
\\
(Step 1) \  We define 
\begin{equation*}
F(t,\widetilde{\chi})=\frac{1}{4}\sum_{\nu \in \mathcal{O}}
\chi_1(\nu)\overline{\nu}e^{-\pi t|\,\nu\,|^2}. 
\end{equation*}
Then,
by using \,\(\smash{\displaystyle\int_0^\infty e^{-ct}t^s\frac{dt}{t}=c^{-s}\Gamma(s)}\),  we have 
\begin{equation}\label{integral_expression}
\pi^{-s}\Gamma(s)L(s,\widetilde{\chi})
=\int_0^\infty F(t,\widetilde{\chi})t^s\frac{dt}{t}. 
\end{equation}
(Step 2) \ For \,\(u\)\, in \(\mathbf{R}^2\)\, and \,\(w\)\, in  \(\mathbf{C}^2\), 
we define 
\,\(\theta_u(t)=\sum_{m\in\mathbf{Z}^2}(m+u)\cdot{w}\,e^{-\pi t|\,m+u\,|^2}\)\, 
and \,\(\theta^u(t)=\sum_{m\in\mathbf{Z}^2}m\cdot{w}\,e^{2\pi i m\cdot u}e^{-\pi t|\, m\,|^2}\), 
where \(\cdot\) stands for the inner product, holds (cf. \cite{K}, p.85, (5.16)). 
Then, 
\begin{equation}
\theta_u(t)=-\frac{i}{t^2}\,\theta^{u}\Big(\frac{1}{t}\Big), 
\label{0306a2}
\end{equation}
Applying (\ref{0306a2}), \(F(t,\widetilde{\chi})\)\, is rewritten 
\begin{equation*}
F(t,\widetilde{\chi})
=\frac{1}{4}\!\sum_{\gamma\,{\rm mod}\,\beta}\!\!\chi_1(\gamma)
\sum_{{\alpha}\in \mathcal{O}}
\overline{\beta{\alpha}{+}\gamma}\,e^{-\pi{}t\,|\beta{\alpha}{+}\gamma|^2}
=\frac{\overline{\beta}}{4}\!\sum_{\gamma\,{\rm mod}\,\beta}
\!\!\chi_1(\gamma)
\sum_{{\alpha}\in \mathcal{O}}
\overline{{\alpha}{+}\frac{\gamma}{\beta}}\,
e^{-\pi t|\beta|^2|{\alpha}{+}\frac{\gamma}{\beta}|^2}
\end{equation*}
By using a vector in \,\(\R^2\), 
we write the inner sum. 
For a given \,\(\gamma\), we put 
\,\(\frac{\gamma}{\beta}=u_1+u_2\i\)\, with \,\(u=(u_1,u_2)\)\, in \,\(\Q^2\), 
\({\alpha}=m_1+m_2\i\)\, with \,\(m=(m_1,m_2)\)\, in \,\(\Z^2\). 
Then we have 
\begin{align*}
\sum_{{\alpha}\in \mathcal{O}}
\bigg(\,\overline{{\alpha}+\frac{\gamma}{\beta}}\,\bigg)
e^{-\pi{t}\,|\,\beta\,|^2|\,{\alpha}+\frac{\gamma}{\beta}\,|^2}
&=\sum_{m\in \Z^2}
\overline{(m+u)\,{\cdot}\,(1,i)}\,
e^{-\pi t|\,\beta\,|^2|\,m+u\,|^2}\\
&=\sum_{m\in \Z^2}
(m+u)\,{\cdot}\,(1,-i)\,
e^{-\pi t|\,\beta\,|^2|\,m+u\,|^2}. 
\end{align*}
We put \(w=(1,-i)\). 
Then we have 
\begin{equation*}
F(t,\widetilde{\chi})
=\frac{\overline{\beta}}{4}\sum_{\gamma\,{\rm mod}\,\beta}
\chi_1(\gamma)\theta_u(|\,\beta\,|^2t). 
\end{equation*}
We note \(u\) depends on \(\gamma\). 
\\
(Step 3) \ 
By using the functional equation (\ref{0306a2}), 
we prove that of \,\(F(t,\widetilde{\chi})\).
\begin{equation*}
F\Big(\frac{1}{|\,\beta\,|^2 t},\widetilde{\chi}\Big)=
\frac{\overline{\beta}}{4}\sum_{\gamma\,{\rm mod}\,\beta}\chi_1(\gamma)\theta_u(\frac{1}{t})
=\frac{\overline{\beta}}{4}\sum_{\gamma\,{\rm mod}\,\beta}\chi_1(\gamma)(-it^2)\theta^u(t).
\end{equation*}
By the definition of \,\(\theta^u(t)\), 
we calculate the right hand side. 
\begin{align*}
F\Big(\frac{1}{|\,\beta\,|^2 t},\widetilde{\chi}\Big)
&=\frac{\overline{\beta}}{4}\sum_{\gamma\,{\rm mod}\,\beta}
\chi_1(\gamma)(-it^2)\sum_{m\in \Z^2}
m\cdot (1,-i) e^{2\pi i m\cdot u}e^{-\pi t |\, m\,|^2}\\
&=\frac{\overline{\beta}}{4}(-it^2)
\sum_{m\in \Z^2}m\cdot (1,-i) e^{-\pi t |\, m\,|^2}
\sum_{\gamma\,{\rm mod}\,\beta}
\chi_1(\gamma)
e^{2\pi i m\cdot u}. 
\end{align*}
It follows from \,\(m\cdot{u}=m_1u_1+m_2u_2=\mathrm{Re}\big((m_1-m_2 i)(u_1+u_2i)\big)
=\mathrm{Re}\big(\overline{{\alpha}}\,\frac{\gamma}{\beta}\big)\)\,
that the sum inside is essentially Gauss sum and
\begin{align*}
\sum_{\gamma\,\bmod{\beta}}\chi_1(\gamma)\,e^{2\pi i m\cdot u}
&=\overline{\chi_1(\overline{{\alpha}})}
\sum_{\gamma\bmod{\beta}}
\chi_1(\overline{{\alpha}}\gamma)\,e^{2\pi i {\rm Re}(\overline{{\alpha}}\gamma/\beta)}\\
&=\overline{\chi_1(\overline{{\alpha}})}
 \sum_{\gamma\,{\rm mod}\,\beta}
 \chi_1(\gamma)\,e^{2\pi i {\rm Re}(\gamma/\beta)}
 =\overline{\chi_1(\overline{{\alpha}})}\,i \beta\, C(\widetilde{\chi}).
\end{align*}
Since 
\,\(\smash{%
\displaystyle\sum_{m\in \Z^2}
m\,{\cdot}\,(1,-i)\,e^{-\pi t |\, m\,|^2}
=\displaystyle\sum_{{\alpha} \in \mathcal{O}}\overline{{\alpha}}e^{-\pi t |\, \overline{{\alpha}}\,|^2}
=\displaystyle\sum_{{\alpha} \in \mathcal{O}}{\alpha} e^{-\pi t |\, {\alpha}\,|^2}
}\)\, 
and \,\(\overline{\widetilde{\chi}}(\nu)=\overline{\chi_1}(\nu)\nu\), 
\vskip 0pt
\begin{equation}\label{0306b}
F\Big(\frac{1}{|\,\beta\,|^2 t},{\color{black}\widetilde{\chi}}\Big)
=\frac{\overline{\beta}\beta}{4}t^2C(\widetilde{\chi})
\sum_{{\alpha} \in \mathcal{O}}
\overline{\chi_1(\overline{{\alpha}})}
{\alpha} e^{-\pi t |\, {\alpha}\,|^2}
=|\,\beta\,|^2t^2C(\widetilde{\chi})F(t,\overline{\widetilde{\chi}}). 
\end{equation}
(Step 4) \ 
From (\ref{integral_expression}) we have
\begin{equation*}
\pi^{-s}\,\Gamma(s)\,L(s,\widetilde{\chi})=
\int_0^\infty t^s\,F(t,\widetilde{\chi})\frac{dt}{t}
=
\int_0^{\frac{1}{|\beta|}}\!t^s F(t,\widetilde{\chi})\frac{dt}{t}
+\int_{\frac{1}{|\beta|}}^\infty t^s F(t,\widetilde{\chi})\frac{dt}{t},
\end{equation*}
in which the former integration is rewritten as
\begin{equation*}
\int_0^{\frac{1}{|\beta|}} t^s F(t,\widetilde{\chi})\frac{dt}{t}
=|\,\beta\,|^{-2s}\!\!
\int^\infty_{\frac{1}{|\beta|}}
v^{-s} F\Big(\frac{1}{|\beta|^2v},\widetilde{\chi}\Big)\frac{dv}{v}
=|\,\beta\,|^{2-2s}C(\widetilde{\chi})
\int^\infty_{\frac{1}{|\beta|}} 
v^{2-s} F(v,\overline{\widetilde{\chi}})\frac{dv}{v}
\end{equation*}
by replacing  \,\(t=\frac{1}{|\,\beta\,|^2v}\)\, and \,\(\frac{dt}{t}=-\frac{dv}{v}\). 
In the case of \,\(s=1\), 
we have 
\begin{equation*}
\pi^{-1}L(1,\widetilde{\chi})=C(\widetilde{\chi})
\int_{\frac{1}{|\beta|}}^\infty F(t,\overline{\widetilde{\chi}})dt
+\int_{\frac1{|\beta|}}^\infty F(t,\widetilde{\chi})dt. 
\end{equation*}
We put 
\begin{equation*}
\smash{%
L(s,\widetilde{\chi})=
\sum_{m=1}^\infty \frac{b_m}{m^s}\quad (b_m\in \mathcal{O}). }
\end{equation*}
Then we have 
\begin{equation*}
L(s,\overline{\widetilde{\chi}})=\sum_{m=1}^\infty \frac{\overline{b_m}}{m^s}, \ \ \ 
F(t,\widetilde{\chi})=\sum_{m=1}^\infty b_m e^{-\pi mt},\ \ 
F(t,\overline{\widetilde{\chi}})=\sum_{m=1}^\infty \overline{b_m} e^{-\pi mt}. 
\end{equation*}
It follows from \(|\,\overline{b_m}\,|=|\,b_m\,|\) that 
\begin{equation*}
|\,F(t,\widetilde{\chi})\,|\le \sum_{m=1}^\infty |\,b_m\,| e^{-\pi mt},\quad 
|\,F(t,\overline{\widetilde{\chi}})\,|\le \sum_{m=1}^\infty |\,b_m\,| e^{-\pi mt}. 
\end{equation*}
As \,\(|\,C(\widetilde{\chi})\,|=1\), we see
\begin{equation*}
\pi^{-1}|\,L(1,\widetilde{\chi})\,|\le 2
\int_{\frac1{|\beta|}}^\infty \sum_{m=1}^{\infty}|\,b_m\,| e^{-\pi m t}dt
= 2\sum_{m=1}^{\infty}\frac{|\,b_m\,|}{\pi m}e^{-\pi m /|\,\beta\,|}. 
\end{equation*}
Multiplying by \(\pi\) on both sides and by using \,\(|\,b_m\,|\le \sigma_0(m)\sqrt{m}\le 2m\), 
where \(\sigma_0(m)\) denotes the number of positive divisors of $m$ 
(cf. \cite{K}, p.\ 96, Problem 4 of p.\ 97), 
we have 
\begin{equation*}
\smash{
|\,L(1,\widetilde{\chi})\,|\le 4\sum_{m=1}^{\infty}e^{-\pi m /|\,\beta\,|}
=\frac{4e^{-\pi  /|\,\beta\,|}}{1-e^{-\pi  /|\,\beta\,|}}
=\frac{4}{e^{\pi  /|\,\beta\,|}-1}}
\end{equation*}
as desired. 
\end{proof}
\vskip 10pt
\section{Estimate of the coefficients of elliptic Gauss sums}\label{estimate}\ 
\vskip 5pt
\noindent
In this section we show (2) implies (1) in Theorem \ref{0212a}. 
At first we prove Lemma \ref{less_than_l_0} whose proof has been reserved. 
\vskip 5pt
\textit{Proof of Lemma}\,\ref{less_than_l_0}. \ 
Since \,\(1/\big(\exp({\pi}/{|\beta|})-1\big)<{|\beta|}/{\pi}\), 
we have \,\(L(1,\widetilde{\chi})<4\times{|\beta|}/{\pi}\).
For \(\ell\equiv 1\bmod{8}\)\, and the conductor \((\beta)=((1+\i)^3\lambda)\), we see
\begin{equation*}
4\cdot\,\frac{2\sqrt{2}\cdot|\lambda|}{\pi}
>L(1,\widetilde{\chi})
=\varpi\,\frac12\,|A_{\lambda}|\,|\lambda|^{-1}\,|\lambda|^{\frac34}
=\varpi\,\frac12\,|A_{\lambda}|\,|\lambda|^{-\frac14}
\end{equation*}
from Theorem \ref{egs_A_lambda}, (\ref{Hecke_L_egs}), and Lemma \ref{Koblitz_type}. 
So that we have
\,\(|A_{\lambda}|<({16\sqrt{2}}/{\pi\,\varpi})\,|\lambda|^{\frac54}\). 
The right hand side is smaller than \,\(\tfrac12\,\ell\)\, for \,\(\ell\geq 97\)\, 
because \,\(97^{\frac38}=5.55\cdots>\frac{32\sqrt{2}}{\pi\varpi}=5.49\cdots\). 
For \,\(\ell<97\), the inequality \,\(|A_{\lambda}|<\ell/2\)\, actually holds by the tables in \cite{A}. 
\qed
\vskip 5pt
\begin{proof}[Proof of \,\((2)\Rightarrow(1)\) of Theorem {\rm\ref{0212a}}] 
Assume that \,\(\ell\,|\,G_{{d}}\). 
Then by Lemma \ref{0214d} we have \,\(\ell \,|\, \mathrm{egs}(\lambda)\). 
By Theorem 5.2, we have \,\(\tilde{\lambda}_0\,|\,A_\lambda\), 
where \,\(\widetilde{\lambda}_0\)\, is the prime defined in Theorem \ref{cong_for_1_mod_8}, and we see \,\(\ell\,|a_\lambda\). 
On the other hand, by Theorem \ref{egs_A_lambda} and Lemma \ref{less_than_l_0}, 
we have \,\(|\,a_\lambda\,|\le |\,A_\lambda\,|\le \ell/2\).
Thus we have \(a_\lambda=A_\lambda=0\) and \(\mathrm{egs}(\lambda)=0\). 
\end{proof}
\vskip 5pt

\end{document}